\DeclareMathOperator{\st}{s.t.}
\DeclareMathOperator{\diag}{diag}
\DeclareMathOperator{\Diag}{Diag}
\DeclareMathAlphabet{\mathcalligra}{T1}{calligra}{m}{n}
\newtheorem{theorem}{Theorem}
\newtheorem{lemma}{Lemma}
\newtheorem{proposition}{Proposition}
\newtheorem{assumption}{Assumption \!\!}
\newtheorem{remark}{Remark}
\newtheorem{example}{Example}
\newcommand{\K}{{\mathcal K}}
\newcommand{\U}{{\mathcal U}}
\newcommand{\Uzero}{{\mathcal U^0}}
\newcommand{\Ubar}{{\mathcal U}'}
\newcommand{\CP}{\mathcal{CP}}
\newcommand{\COP}{\mathcal{COP}}
\newcommand{\SOC}{\mathcal{SOC}}
\newcommand{\IA}{{\mathcal{IA}}}
\newcommand{\AS}{{\mathcal{AS}}}
\newcommand{\SYM}{{\mathcal S}}
\newcommand{\bm}{\boldsymbol}
\def\RR{ {\Bbb{R}}}
\newcommand{\X}{{\mathcal X}}
\newcommand{\A}{{\bm{\mathcal A}}}
\newcommand{\BB}{{\bm{\mathcal B}}}
\newcommand{\uu}{{\bm{u}}}
\newcommand{\vv}{{\bm{v}}}
\newcommand{\dd}{{\bm{d}}}
\newcommand{\yy}{{\mathscr{\bm{y}}}}
\newcommand{\x}{{\bm{x}}}
\newcommand{\cc}{{\bm{c}}}
\newcommand{\hh}{{\bm{h}}}
\newcommand{\FF}{{\mathscr{F}}}
\newcommand{\LDR}{{\mathcal{L}} }
\newcommand{\QDR}{{\mathcal{Q}} }
\newcommand{\PLDR}{{\mathcal{PL}} }
\newcommand{\PQDR}{{\mathcal{PQ}} }
\title{ Improved Decision Rule Approximations for Multi-Stage Robust Optimization via Copositive Programming }
\author{
Guanglin Xu\thanks{Institute for Mathematics and its Applications, University of Minnesota,
Minneapolis, MN, 55455, USA. Email: {\tt gxu@umn.edu}.}%
\ \ \ \ \ \ \
Grani A. Hanasusanto\thanks{Graduate Program in Operations Research and Industrial Engineering, 
The University of Texas at Austin, Austin, TX, 78712-1591, USA. Email: {\tt grani.hanasusanto@utexas.edu}.}%
}
\date{ }
\begin{document}

\maketitle

\begin{abstract}

\noindent We study decision rule approximations for generic multi-stage 
robust linear optimization problems. We examine linear decision rules for the 
case when the objective coefficients, the recourse matrices, and the right-hand 
sides are uncertain, and examine quadratic decision rules for the case when only
the right-hand sides are uncertain. The resulting optimization problems are 
NP-hard but amenable to copositive programming reformulations that give rise 
to tight, tractable semidefinite programming solution approaches. We further enhance these approximations through new piecewise decision rule schemes. Finally, we prove that our proposed
approximations are tighter than the state-of-the-art schemes and demonstrate their superiority through numerical experiments. 

\mbox{}

\noindent Keywords: Multi-stage robust optimization; decision rules; piecewise decision rules;
conservative approximation; copositive programming; semidefinite programming

\end{abstract}

%\begin{keyword}

%\end{keyword}

\begin{onehalfspace}

\section{Introduction} \label{sec:intro}

Decision-making under uncertainty arises in a wide spectrum of  applications 
in operations management, engineering, finance, and process control. 
A prominent modeling approach 
for decision-making under uncertainty is {\em robust optimization} (RO), whereby
one seeks for a decision that hedges against the worst-case realization of uncertain 
parameters; see~\cite{Ben-Tal.Ghaoui.Nemirovski.2009, Ben-Tal.Nemirovski.2002, 
Bertsimas.Brown.Caramanis.2011}. RO paradigm is appealing because it leads to 
computationally tractable solution schemes for many \emph{static} decision-making 
problems under uncertainty. However, real-life  problems are often \emph{dynamic} 
in nature, where the uncertain parameters are revealed sequentially and the decisions 
must be adapted to the current realizations. The adaptive decisions are fundamentally 
infinite-dimensional as they constitute mappings from the space of uncertain 
parameters to the space of actions. This setting gives rise to the {\em multi-stage 
robust optimization} (MSRO) problems which in general are  computationally challenging to solve. 
Only in a few cases and under very stringent conditions are the problems efficiently solvable;  
see for instance~\cite{Ben-Tal.Nemirovski.1999,Bertsimas.Goyal.2015,Guslitser.2002}. 
Consequently, the design of solution schemes for MSRO  necessitates to reconcile the 
conflicting objectives of optimality and scalability. 

Conservative approximations for MSRO can be derived in \emph{linear decision rules}, 
where we restrict the adaptive decisions to affine functions in the uncertain parameters. 
Popularized by Ben-Tal et al.~\cite{Ben-Tal.Goryashko.Guslitzer.Nemirovski.2004}, 
linear decision rules have found successful applications in various areas of decision-making
problems under uncertainty~\cite{Atamturk.Zhang2007,Ben-Tal.Golany.Nemirovski.Vial.2005,
Calafiore2008,Chen.Sim.Sun.2007,Chen.Sim.Peng.Zhang.2008,Gounaris.Wiesemann.Floudas.2013,
Rocha.Kuhn.2012} as they are simple yet reasonable to implement in practice. Moreover, linear decision 
rules are optimal for some instances of  MSRO~\cite{Bertsimas.Goyal.2012,Iancu.Sharma.Sviridenko.2013}, 
linear quadratic optimal control~\cite{Anderson.Mooore.2007}, and robust vehicle
routing~\cite{Gounaris.Wiesemann.Floudas.2013} problems. The resulting optimization problems, 
however, are tractable only under the restrictive setting of  \emph{fixed recourse}, i.e., 
when the adaptive decisions are not multiplied with the uncertain parameters in the problem's formulation. 
Many decision-making problems under uncertainty such as portfolio 
optimization~\cite{Ben-Tal.Margalit.Nemirovski.2000,Dantzig.Infanger.1993,Rocha.Kuhn.2012}, 
energy systems operation planning~\cite{Georghiou.Wiesemann.Kuhn.2015-2,Martins.2013}, 
inventory planning~\cite{Bertsimas.Georghiou.2018}, etc.~do not satisfy the fixed recourse assumption. 
For these problem instances, the linear decision rule approximation is NP-hard already in a two-stage 
setting~\cite{Ben-Tal.Goryashko.Guslitzer.Nemirovski.2004,Guslitser.2002}.

The basic linear decision rules have been extended to truncated linear~\cite{See.Sim.2010}, 
segregated linear~\cite{Chen.Sim.Peng.Zhang.2008, Chen.Zhang.2009, Goh.Sim.2010}, and 
piecewise linear~\cite{Ben-Tal.El-Housni.Goyal.2018, Georghiou.Wiesemann.Kuhn.2015}
functions in the uncertain parameters. If the MSRO problem has fixed recourse then one can 
formally prove that the optimal adaptive decisions are piecewise linear~\cite{Bemporad.Borrelli.Morari.2003}, 
which justifies the use of these enhanced approximations. Unfortunately, optimizing for the best piecewise 
linear decision rule entails solving globally a non-convex optimization problem which is inherently 
difficult~\cite{Ben-Tal.El-Housni.Goyal.2018,Bertsimas.Georghiou.2015}. If in addition some basic 
descriptions about the piecewise linear structure are prescribed, then one can derive tractable linear 
programming approximations for problem instances with fixed recourse by Georghiou et al.~\cite{Georghiou.Wiesemann.Kuhn.2015}. 
Their piecewise linear decision rule scheme is a generalization of the aforementioned methods, including the
 truncated linear decision rule~\cite{See.Sim.2010} and the segregated linear decision 
rule~\cite{Chen.Sim.Peng.Zhang.2008, Chen.Zhang.2009, Goh.Sim.2010}.

If tighter approximation is desired or when the  problem has non-fixed recourse, then one can  in principle  
develop a hierarchy of increasingly tight semidefinite  approximations using 
\emph{polynomial decision rules}~\cite{Bertsimas.Iancu.Parrilo.2011}.  While optimizing for the best 
polynomial decision rule of fixed degree is difficult, tractable conservative approximations can be obtained 
by employing the Lasserre hierarchy~\cite{Lasserre.2009,Parrilo.2000}. Such approximations are attractive 
because they do not require prior structural knowledge about the optimal adaptive decisions. However, 
the resulting semidefinite programs scale poorly with the degree of the polynomial decision rules. A decent tradeoff 
between suboptimality and scalability is attained in \emph{quadratic decision rules}, where one merely optimizes 
over polynomial functions of degree~$2$. Their semidefinite approximations, based on the well-known approximate 
S-lemma~\cite{Ben-Tal.Ghaoui.Nemirovski.2009},  have been applied successfully to instances of inventory 
planning~\cite{Bertsimas.Iancu.Parrilo.2011,Hanasusanto.Kuhn.Wallace.Zymler.2015} and
electricity capacity expansion~\cite{Bampou.Kuhn.2011} problems. A posteriori lower bounds to the MSRO 
problem can be derived by applying  decision rules to  the problem's dual formulation~\cite{Bampou.Kuhn.2011,
Georghiou.Wiesemann.Kuhn.2015, Kuhn.Wiesemann.Georghiou.2011}.  Alternative schemes that  similarly 
provide aggressive bounds for MSRO are proposed in \cite{Hadjiyiannis.Goulart.Kuhn.2011} 
and~\cite{Bertsimas.deRuiter.2016}. All the methods mentioned above can be applied to different paradigms in optimization under uncertainty, such as stochastic programming, robust optimization, and distributionally robust optimization. 
%Although most of the aforementioned methods are proposed in 
%robust optimization settings, they can readily applied to stochastic programming and distributionally
%robust optimization settings. 
%For instance, the decision rule approximation methods 
%introduced in~\cite{Chen.Sim.Peng.Zhang.2008, Georghiou.Wiesemann.Kuhn.2015} are designed for solving stochastic programming problems. 
%The reader is referred to the comprehensive review on decision rule approaches to optimization under uncertainty~\cite{Georghiou.Kuhn.Wiesemann.2019}.
Our paper focuses on the robust optimization setting because it requires minimal assumptions about the uncertainty, which allows us to present the main idea cleanly. If distributional information is available, then the proposed method can be directly applied to the other settings in a relatively straightforward fashion.

Global optimization approaches have also been designed to derive exact solutions of 
MSRO problems. In the two-stage robust optimization setting, these methods include Benders' 
decomposition~\cite{Bertsimas.Litvinov.Sun.Zhao.Zheng.2013, 
	Doulabi.Jaillet.Pesant.Rousseau.2016}, column and constraint 
generation~\cite{Zeng.Zhao.2013}, extreme point enumeration combined with decision 
rules \cite{Georghiou.Tsoukalas.Wiesemann.2017}, and Fourier-Motzkin elimination~\cite{Zhen.Hertog.Sim.2016}.  
The Benders' decomposition scheme has been extended to the multi-stage setting for MSRO 
problems where the uncertain parameters exhibit a \emph{stagewise rectangular} 
structure~\cite{Georghiou.Tsoukalas.Wiesemann.2016}. The papers   \cite{Bertsimas.Dunning.2016} 
and \cite{Postek.Hertog.2016} develop adaptive uncertainty set partitioning schemes that generate a 
sequence of increasingly accurate conservative approximations for MSRO. Global optimization scheme 
has also been conceived through the lens of  conic reformulations. 
Hanasusanto and Kuhn~\cite{Hanasusanto.Kuhn.2016}
and Xu and Burer~\cite{Xu.Burer.2018} propose independently 
equivalent copositive programming reformulations for two-stage robust 
optimization problems and develop conservative semidefinite 
approximations for the reformulations. %{\color{blue} As our  }
 
Using {\em copositive programming} techniques, this paper takes a first step towards addressing a generic linear MSRO problem 
where the objective coefficients, the recourse matrix, and the right-hand sides are uncertain. A copositive program is a convex program that optimizes a linear function over  the cone of copositive matrices subject to linear constraints~\cite{Bomze.2012, Burer.2012, Dur.2010}. Bomze et al.~\cite{Bomze.Dur.Klerk.Roos.2000} are the first to reformulate an NP-hard problem, namely the standard quadratic problem, to an 
equivalent copositive program.  The seminal work of Burer~\cite{Burer.2009} shows that a generic quadratic program can be reformulated to an equivalent copositive program. 
In another work, Burer and Dong~\cite{Burer.Dong.2012} establish the equivalence between a non-convex quadratically constrained quadratic program (QCQP) and a generalized copositive program under certain conditions.  We refer the reader to~\cite{Burer.Dong.2012, Chen.Burer.2012, Kong.Lee.Teo.Zheng.2013, Natarajan.Teo.2017, Natarajan.Teo.Zheng.2011} for more works on using copositive techniques to reformulate non-convex quadratic programs arising in different applications.

Our {\em key contribution} is to utilize copositive 
programming techniques to develop {\em stronger} decision rule approximations for {\em generic} MSRO problems. 
In the generic settings, the direct use of decision rules leads to computationally intractable semi-infinite programs, with finitely many decision variables but infinitely many constraints. The standard dualization procedure in robust optimization does not apply because these constraints involve non-convex QCQPs. We leverage the copositive reformulation techniques to convexify the QCQPs, which enables the dualization of the constraints to arrive at finite-dimensional convex optimization problems. 
The copositive techniques further allow us to handle complex uncertainty sets (e.g., integrating complementary constraints), which lead to exact convex reformulations for a class of piecewise decision rule approximations. All these new reformulations enjoy tractable semidefinite approximations that are provably superior to the state-of-the-art schemes.
We summarize the contributions of the paper as follows.
\begin{enumerate}
	\item For the generic MSRO problems we derive new copositive programming 
	reformulations in view of the popular linear decision rules. For MSRO problems 
	with  fixed recourse we derive new  copositive programming reformulations in view of the 
	more powerful quadratic decision rules. The exactness results are   general: They hold for 
	MSRO problems \emph{without} relatively complete recourse, and under  very minimal 
	assumption about the compactness of the uncertainty set, \emph{without} requiring it to 
	exhibit stage-wise rectangularity. %{\color{blue} or convexity}. 
	\item The emerging copositive programs are amenable to a hierarchy of  increasingly tight 
	conservative semidefinite programming approximations. We formulate the simplest of these 
	approximations and prove that it is tighter than the state-of-the-art scheme by 
	Ben-Tal et al.~\cite{Ben-Tal.Goryashko.Guslitzer.Nemirovski.2004}, and also  the polynomial 
	decision rule scheme by Bertsimas et al.~\cite{Bertsimas.Iancu.Parrilo.2011} when the degree 
	of the polynomial is set to the degree of our decision rules (degree $1$ for problems with 
	non-fixed recourse and degree $2$ for problems with fixed recourse). We demonstrate 
	empirically  that our proposed  approximation is competitive to  polynomial decision rules 
	of higher degrees while displaying more favorable scalability. 
	\item We propose piecewise linear decision rules for MSRO problems with non-fixed recourse 
	and piecewise quadratic decision rules for MSRO problems with fixed recourse. 
	To our best knowledge, these decision rules are new for their respective problem classes. 
	By leveraging  recent  techniques in copositive programming, we derive equivalent copositive 
	programs for the  piecewise decision rule approximations.   %To our best knowledge, piecewise linear decision rules are new for MSRO problems with non-fixed recourse while piecewise quadratic decision rules are  new for  problems with fixed recourse. 
	For MSRO problems with fixed recourse, we show that the state-of-the-art scheme by Georghiou 
	et al.~\cite{Georghiou.Wiesemann.Kuhn.2015} can be futile even on trivial two-stage problem 
	instances, while our semidefinite approximation produces high-quality solutions. We formally 
	prove that our proposed  approximation is indeed tighter than that 
	of~\cite{Georghiou.Wiesemann.Kuhn.2015}, and further identify the simplest set of semidefinite 
	constraints that retains the outperformance while maintaining scalability. 
\end{enumerate}

%{\color{blue} We note that our methodology extends the application of copositive programming techniques 
%in the field of robust optimization in the following aspects. First, we are the first to utilize copositive programming reformulation 
%techniques to develop effective decision rule approximations in the generic MSRO setting. Second, our proposed 
%reformulation scheme enables us to develop piecewise linear and quadratic decision rules in the MSRO setting. 
%Third, the proposed copositive programming reformulation can also enable us to tackle more general uncertainty sets. 
%Our regularization on the uncertainty set is only requiring it to be compact. This will give modeler more freedom to
%construct and choose proper uncertainty sets in their problem contexts. }

The remainder of the paper is organized as follows. We derive the copositive programming reformulations 
for two-stage robust optimization problems in Section~\ref{sec:two-stage}. 
% {\color{red} In Section \ref{sec:enhanced}, we provide the generalization to piecewise  decision rules and derive their copositive reformulations. }
In Section~\ref{sec:SDP_approx}, 
we develop the conservative semidefinite programming approximations. We 
extend all results to the multi-stage setting in Section \ref{sec:multi-stage} and present the numerical 
results in Section \ref{sec:numerical}.  

\subsection{Notation and terminology}

For any $M \in \Bbb{N}$, we define $[M]$ as the set of running indices
$\{1, \ldots, M\}$. We let $[M]\backslash \{1\}$ be the set of running indices $\{2,\ldots,M\}$.
We denote by  $\mathbf{e}$ the vector of all ones  and by $\mathbf{e}_i$ the $i$-th 
standard basis vector. For notational convenience, we use both $v_i$ and 
$[\bm v]_i$ to denote the $i$-th component of the vector $\bm{v}$.  
The $p$-norm of a vector $\bm{v} \in \RR^N$ is defined as~$\|\bm{v}\|_p$. 
We will drop the subscript for the Euclidean norm, i.e., $\|\bm{v}\| := \|\bm{v}\|_2$.  
For $\bm a \in \RR^N$ and $\bm b \in \RR^N$, the Hadamard product of 
$\bm a$ and $\bm b$ is denoted by $\bm a \circ \bm b:= (a_1b_1, \ldots, a_Nb_N)^\top$. 
The trace of a square matrix $\bm X$ is denoted as $\text{trace} (\bm {X})$.
We use $[\bm A]_{ij}$ to denote the entry in the $i$-th row and the $j$-th column of
the matrix $\bm{A}$. We define $\diag(\bm{X})$ as the vector comprising the diagonal 
entries of $\bm X$, and $\Diag(\bm{v})$ as the diagonal matrix with the vector $\bm{v}$ 
along its main diagonal. We use $\bm{X} \geq \bm 0$ to denote that $\bm{X}$ is a component-wise 
nonnegative matrix. For any matrix $\bm{A} \in \RR^{M \times N}$, the inclusion
$\text{Rows}(\bm{A}) \in \K$ indicates that the column vectors corresponding to the rows of $\bm{A}$ 
are members of $\K$. We denote by  
$\FF_{K+1,\, N}$ the space of all measurable  mappings $\yy(\cdot)$ from $\RR^{K+1}$ to $\RR^N$. 

For any closed and convex cone  ${\cal K}$, we denote its dual cone as ${\cal K}^*$. 
We define by ${\SOC} \subseteq \RR^{K+1}$  the standard second-order cone, i.e.,
$\bm{v} \in {\SOC} \Longleftrightarrow \|(v_1, \ldots, v_K)^\top\| \leq v_{K+1}$.  
We denote the space of symmetric matrices in $\mathbb R^{N\times N}$ as~$\SYM^N$. 
For any $\bm{X} \in \SYM^N$, 
we set $\bm{X} \succeq \bm 0$ to denote that $\bm{X}$ is positive semidefinite. 
For convenience, we call the cone of positive semidefinite matrices as the semidefinite cone 
and the cone of symmetric nonnegative matrices as the the nonnegative cone. 
The {\em copositive cone\/} is defined as
%\[
 $\COP(\RR_+^N) := \{ \bm{M} \in \SYM^N: \x^\top \bm{M} \x \ge 0 \ \forall \x \in \RR_+^N \}$.
%\]
Its dual cone, the {\em completely positive cone\/}, is defined as 
%\[
    $\CP(\RR_+^N) := \{ \bm{X} \in \SYM^N: \bm{X} = \textstyle{\sum_i} \x^i (\x^i)^\top, \ \x^i \in \RR_+^N \}$,
%\]
where the summation over $i$ is finite but its cardinality is
unspecified. For a general closed and convex cone
${\cal K} \subseteq \RR^N$, we define the {\em generalized
copositive cone} as $\COP({\cal K})$ and the 
{\em generalized completely positive cone\/} as
$\CP({\cal K})$, respectively, in analogy with $\COP(\RR_+^N)$ and
$\CP(\RR_+^N)$. Note that $\COP({\cal K})$
and $\CP({\cal K})$ are dual cones to each other. 
The term {\em copositive programming\/} refers to linear
optimization over $\COP({\cal K})$ or, via duality, linear optimization
over $\CP({\cal K})$.  To distinguish from 
 the standard case where ${\cal K} = \RR_+^N$, they are  sometimes called
{\em generalized copositive programming\/} or {\em set-semidefinite
optimization\/}~\cite{Burer.Dong.2012, Eichfelder.Jahn.2008}. In this paper, we work with
generalized copositive programming, although we use the shorter phrase
for simplicity.

\section{Copositive reformulations for two-stage decision rule problems} \label{sec:two-stage}

In this section, we first state the generic setting of a two-stage robust optimization problem. We then consider various decision rules for the two-stage problem and propose copositive programming reformulations for the decision rule problems.

\subsection{Two-stage robust optimization problem}

We study adaptive linear optimization problems of the following general structure. 
A decision maker first takes a here-and-now decision $\bm x \in\X$, which 
incurs an immediate linear cost $\bm c^\top \bm x$. 
Nature then reacts with  a worst-case parameter realization $\bm u\in\U$. 
In response, the decision maker takes a recourse action $\bm y(\bm u)\in\RR^N$, 
which incurs a second-stage linear cost $\dd(\uu)^{\top}\yy(\uu)$. 
 In this game against nature,  the decision maker endeavors  to optimally select a feasible solution 
$(\bm x,\yy(\cdot))$ that minimizes the total cost 
$\cc^{\top} \x + \sup_{\uu \in \U} \dd(\uu)^{\top}\yy(\uu)$. We note that the second-stage decision 
vector constitutes a mapping $\yy:  \U \rightarrow \RR^N$ and is thus infinite 
dimensional. 

The emerging sequential 
decision problem can be formulated as a two-stage robust optimization problem 
given by 
\begin{equation} \label{equ:tsro}
\begin{array}{rcl} 
Z=&\inf & \cc^{\top} \x + \sup \limits_{\uu \in \U} \dd(\uu)^{\top}\yy(\uu) \\
&\st  & \A(\uu)\x +  \BB(\uu) \yy(\uu) \geq  \hh(\uu)  \  \  \  \forall \, \uu \in \U  \\
&& \x \in \X, \ \yy \in \FF_{K+1, \, N}.
\end{array}
\end{equation}
Here, the feasible set of the first-stage decision $\x$ is captured by a generic set $\X \subseteq \RR^M$, 
while that of the second-stage decision $\bm y(\bm u)$ is defined through a  linear constraint system 
$\A(\uu)\x +  \BB(\uu) \yy(\uu) \geq  \hh(\uu)$. The uncertain parameter vector $\uu$ is assumed to 
belong to a prescribed \emph{uncertainty set}  $\U$, which we model as the intersection of 
a slice of a closed and convex cone $\K \subseteq \RR^K \times  \RR_+$, and the
level sets of $I$ quadratic functions. Specifically, we set 
\begin{equation} \label{equ:uncertainty}
\U := \left\{ \uu \in \K : \begin{array}{l}\mathbf{e}_{K+1}^\top \uu = 1\\[2mm]
  \uu^\top \bm{\widehat C}_i \uu = 0 \ \ \ \forall \, i \in [I]  \end{array}\right\},
\end{equation}
where $\bm{\widehat C}_i \in \SYM^{K+1}$ for all $i \in [I]$. The problem parameters $\A(\uu) \in 
\RR^{J \times M}$, $\BB(\uu) \in \RR^{J \times N}$,  $\dd(\uu) \in \RR^N$ and $\hh(\uu) \in \RR^J$
in \eqref{equ:tsro} are assumed to be linear in $\uu$, given by
\[
\begin{array}{l}
\A(\uu) = \sum \limits_{k=1}^{K+1} u_k \, \bm{\widehat A}_k, \ \ \  \BB(\uu) = 
\sum \limits_{k=1}^{K+1} u_k \, \bm{\widehat B}_k, \ \ \ 
\dd(\uu) =  \bm{\widehat D}\uu, \ \ \  
\hh(\uu) =   \bm{\widehat H}\uu,
\end{array}
\]
where $\bm{\widehat A}_k \in \RR^{J \times M}$, $\bm{\widehat B}_k \in \RR^{J \times N}$, 
$\bm{\widehat D} := (\bm{\widehat d}_1, \ldots, \bm{\widehat d}_{N})^\top \in \RR^{N \times (K+1)}$,
and $\bm{\widehat H} := (\bm{\widehat h}_1, \ldots, \bm{\widehat h}_{J})^\top \in \RR^{J \times (K+1)}$
are deterministic data. 
The nonrestrictive assumption that $u_{K+1}= 1$ in (\ref{equ:uncertainty}) 
will simplify notation as it allows us to represent affine functions in the 
primitive uncertain parameters $(u_1, \ldots, u_K)^{\top}$ in a 
compact way as linear functions of $\bm{u}$, e.g., the problem parameters 
$\A(\uu)$, $\BB(\uu)$, $\dd(\uu)$, and $\hh(\uu)$, and the linear decision rule
$\bm Y\uu$ (Section~\ref{sec:ldr}); and as it also allows us to represent 
quadratic functions in the primitive uncertain parameters 
in a homogenized manner, e.g., the quadratic decision rule $\uu^\top\bm Q\uu$ 
(Section~\ref{sec:ldr}). 

The cone $\K$ in the description of $\U$
has a   generic form and can  model many common uncertainty sets in the literature. 
We highlight three pertinent examples as follows. 

\begin{example}[Polytope]
	\label{exa:polytope}
	If the  uncertainty set of the primitive   vector $(u_1,\dots,u_K)^\top$ is given by a polytope 
	$\{\bm \xi \in \RR^{K}:\bm P\bm\xi\geq \bm q \}$, then the corresponding cone is defined as
	\[
	\K:= \left\{ (\bm \xi, \tau ) \in \RR^{K} \times \RR_+ : 
	\bm{P}\bm \xi \geq   \bm{q} \tau
	\right\}.
	\]
\end{example}

\begin{example}[Polytope and 2-Norm Ball]
	If the  uncertainty set of the primitive   vector is given by the intersection of  a polytope  and a transformed $2$-norm ball:
	$\{\bm \xi \in \RR^{K}:\bm P\bm\xi\geq \bm q,\;\|\bm R\bm\xi-\bm s\|_2\leq t\}$, then the corresponding cone is defined as
	\[
\K:= \left\{ (\bm \xi, \tau ) \in \RR^{K} \times \RR_+ : 
\bm{P}\bm \xi\geq   \bm{q} \tau,\,\|\bm R\bm \xi-\bm s\tau\| \leq t\tau
\right\}.
\]
\end{example}

\begin{example}[Ellipsoids]
	\label{exa:ellipsoids}
	Consider the setting where the uncertainty set of the primitive  vector is described by an intersection of $L$ ellipsoids:
	$\{ \bm \xi \in \RR^{K} : \bm \xi^\top \bm F_\ell \bm \xi + 2\bm g_\ell^\top \bm \xi \leq  h_\ell, \;\forall \ell \in [L]\}$.
	Here, $\bm F_\ell \in \SYM^K$,  $\bm F_\ell \succeq \bm 0$,  $\bm g_\ell \in \RR^{K}$,  and   $h_\ell \in \RR$ for all
	$\ell \in [L]$. Since $\bm F_\ell$ is positive semidefinite, we have 
	$\bm F_\ell = \bm P_\ell^\top \bm P_\ell$ for some matrix $\bm P_\ell \in \RR^{I_\ell \times K}$ whose rank is $I_\ell$. 
	In~\cite{Alizadeh.Goldfarb.2003}, it is shown that
	\[
	\bm \xi^\top \bm F_\ell \bm \xi + 2\bm g_\ell^\top \bm \xi  \leq  h_\ell \ \  \Longleftrightarrow  \ \
	\begin{pmatrix} 
	\bm P_\ell \bm \xi \\
	\frac12(1- h_\ell) + \bm g_\ell^\top \bm \xi  \\ 
	\frac12(1+ h_\ell) - \bm g_\ell^\top \bm \xi 
	\end{pmatrix} 
	\in \SOC(I_\ell + 2),
	\] 
	where $\SOC(I_\ell + 2)$ denotes the second-order cone of dimension $I_\ell + 2$.
	In this case, the corresponding cone is given by
	\[
	\K := \left\{ 
	(\bm \xi, \tau) \in   \RR^K  \times \RR_+ : 
	\begin{pmatrix} 
	\bm P_\ell \bm \xi \\
	\frac12(1- h_\ell)\tau + \bm g_\ell^\top \bm \xi  \\ 
	\frac12(1+ h_\ell)\tau - \bm g_\ell^\top \bm \xi 
	\end{pmatrix} 
	\in \SOC(I_\ell + 2)  \  \ \forall \, \ell \in [L] 
	\right\}.
	\] 
\end{example}

In the following, to simplify our exposition,  we define  the convex set
\begin{equation} \label{equ:Lset}
{\Uzero} := \left\{ \uu \in \K : \mathbf{e}_{K+1}^\top \uu = 1 \right\},
\end{equation}
which corresponds to the uncertainty set $\mathcal U$ in the absence 
of the non-convex constraints $\uu^\top \bm{\widehat C}_i \uu = 0$, $i \in [I]$. 
We further assume that the uncertainty set satisfies the following regularity conditions. 
 
\begin{assumption} \label{ass:bounded}
The set ${\Uzero} $ defined in (\ref{equ:Lset})  
is nonempty and compact. 
\end{assumption}

\begin{assumption} \label{ass:quadcons}
The minimum value of the quadratic function $\uu^\top \bm{\widehat C}_i\uu$ 
over the set ${\Uzero} $ is 0 for all $i \in [I]$, i.e., $
0 = \min_{\uu \, \in \, {\Uzero} } \uu^\top \bm{\widehat C}_i\uu$, $i \in [I]$. 
\end{assumption}

\noindent The quadratic constraints in the description of 
$\U$ are motivated by both practical and modeling requirements. 
Numerous applications in robust optimization, including  
inventory planning and project crashing problems, involve 
binary uncertain parameters; see~\cite{Gokalp.Mittal.Hanasusanto.2018}. In this case, 
we can incorporate binary variables in $\U$ via  quadratic 
constraints of the form in~(\ref{equ:uncertainty}). Specifically, we have that
$u_k \in \{0,1\}$ is equivalent to $u_k^2 = u_k$. If the relation
 $0 \leq u_k \leq 1$ is implied by  $\Uzero$  (note that we can explicitly introduce these constraints 
into $\Uzero $ if necessary), then we have $0 = \min_{\uu \in \Uzero } 
\left\{ - u_k^2 + u_k \right\}$, which shows that the  quadratic constraint $-u_k^2 + u_k = 0$ satisfies the condition in 
 Assumption~\ref{ass:quadcons}. Furthermore, these constraints will be crucial for deriving 
our improved decision rules as they enable us to model complementary constraints, e.g., 
$u_{k}u_{k'}  = 0$;  see Section \ref{sec:enhanced} for detail. 
If~$\Uzero$ implies that both $u_{k}$ and $u_{k'}$ are nonnegative 
and bounded, then we have $0 = \min_{\uu \in \Uzero } \left\{ u_{k} u_{k'} \right\}$.
Thus, the quadratic constraint $u_{k} u_{k'}= 0$ satisfies the condition 
in Assumption \ref{ass:quadcons}.

Two-stage robust optimization problems of the form (\ref{equ:tsro}) are generically NP-hard 
\cite{Ben-Tal.Goryashko.Guslitzer.Nemirovski.2004}. A popular conservative approximation 
scheme is obtained in \emph{linear decision rules}, where we restrict the 
recourse action $\bm y(\cdot)$ to be a linear function of $\bm u$. If the problem has 
fixed \emph{recourse} (i.e., $\BB(\uu)$ and  $\bm d(\uu)$ are constant), 
then the linear decision rule approximation leads to 
tractable linear programs. On the other hand, if the problem has \emph{non-fixed recourse} (i.e., $\BB(\uu)$ or $\bm d(\uu)$ depends linearly 
in $\bm u$), then the approximation itself is intractable. In the following, we show that the linear decision rule problems are amenable
to exact copositive programming reformulations. Furthermore, in the specific case where the
problem has fixed recourse, we develop an improved approximation in \emph{quadratic 
decision rules}, and show that the resulting optimization problems can also be reformulated as equivalent copositive programs.

\subsection{Linear decision rule  for problems with non-fixed recourse} \label{sec:ldr}

In this section, we derive an exact copositive program by applying linear decision rules to problem~\eqref{equ:tsro}. 
Instead of considering all possible choices of functions $\yy:\mathcal U\rightarrow \RR^N$ from $\FF_{K+1, \, N}$, we
restrict ourselves to linear functions of the form
\[
\bm y(\uu)=\bm Y\uu,% \qquad \forall \, \uu \in \RR^{K+1},
\]
for some coefficient matrix $\bm Y\in \RR^{N\times (K+1)}$. 
This setting gives rise to the following conservative 
approximation of problem \eqref{equ:tsro}:
\begin{equation} \label{equ:ldr}
\tag{$\LDR$}
\begin{array}{rcll} 
Z^{\LDR}=&\inf & \cc^{\top} \x + \sup \limits_{\uu \in \U} \dd(\uu)^{\top} \left( \bm Y \uu \right)  \\
&\st  &  \A(\uu)\x +  \BB(\uu) \bm Y \uu  \geq \hh(\uu) \ \ \ \forall \, \uu \in \U  \\
 &    & \x \in \X, \, \bm Y\in \RR^{N\times (K+1)}.
\end{array}
\end{equation}
Problem \eqref{equ:ldr} is finite-dimensional but remains difficult to solve as there 
are infinitely many constraints parametrized by $\uu \in \U$. In particular, it is shown 
in \cite{Ben-Tal.Goryashko.Guslitzer.Nemirovski.2004} that the problem is  NP-hard 
via a reduction from the problem of checking matrix copositivity. 

We now show that an equivalent copositive programming reformulation can 
principally be derived for problem~\eqref{equ:ldr}. We first introduce the 
following technical lemmas, which are fundamental for our derivations. 
The first technical lemma establishes the 
equivalence between a nonconvex quadratic program
\begin{equation} \label{equ:qp}
\begin{array}{ll}
\sup &  \uu^\top \bm{\widehat C}_0 \uu \\
\st  & \mathbf{e}_{K+1}^\top \uu = 1 \\
      & \uu^\top \bm{\widehat C}_i \uu = 0 \ \ \ \forall \, i \in [I] \\
      & \uu \in \K 
\end{array}
\end{equation}
and its copositive relaxation
\begin{equation} \label{equ:qp-cp}
\begin{array}{ll}
\sup &   \bm{\widehat C}_0  \bullet \bm{U} \\
\st  & \mathbf{e}_{K+1}\mathbf{e}_{K+1}^\top \bullet \bm U = 1 \\
      & \bm{\widehat C}_i \bullet \bm U = 0 \ \ \ \forall \, i \in [I]  \\
      & \bm U  \in \CP(\K),
\end{array}
\end{equation}
where $\bm{\widehat C}_0  \in \SYM^{K+1}, \K \subseteq \RR^{K+1}$ 
is a closed and convex cone, and $\CP(\K)$ is the cone of completely positive 
matrices with respect to $\K$.

\begin{lemma} [\cite{Burer.2012}, Corollary 8.4, Theorem 8.3] \label{lem:burer}
 Suppose that Assumptions \ref{ass:bounded} and \ref{ass:quadcons} hold.
Then, problem (\ref{equ:qp-cp}) is equivalent to
(\ref{equ:qp}), i.e., i) the optimal value of (\ref{equ:qp-cp}) is equal to that of (\ref{equ:qp});
ii) if $\bm{U}^\star$ is an optimal solution for (\ref{equ:qp-cp}), then $\bm{U}^\star\mathbf{e}_1$
is in the convex hull of optimal solutions for (\ref{equ:qp}).
\end{lemma}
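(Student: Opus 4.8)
The statement to prove is Lemma~\ref{lem:burer}, which asserts the equivalence between the nonconvex quadratic program \eqref{equ:qp} and its completely positive relaxation \eqref{equ:qp-cp}. The lemma is explicitly attributed to Burer~\cite{Burer.2012}, Corollary 8.4 and Theorem 8.3, so my plan is essentially to verify that the hypotheses of Burer's general result are met by our formulation and to translate his conclusions into the notation used here.

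\textbf{Setup and reduction to Burer's framework.} First I would recall the form of the problem Burer treats: a quadratic program over a cone with linear equality constraints and a single normalization, which under a boundedness hypothesis admits an exact completely positive lift. The objective $\uu^\top \bm{\widehat C}_0 \uu$ is a homogeneous quadratic, the constraint $\mathbf{e}_{K+1}^\top \uu = 1$ plays the role of the normalizing hyperplane, the equalities $\uu^\top \bm{\widehat C}_i \uu = 0$ are the (homogeneous) quadratic side constraints, and $\uu \in \K$ is the conic membership. I would point out that under the lifting $\bm U = \uu\uu^\top$ the objective becomes $\bm{\widehat C}_0 \bullet \bm U$, the normalization becomes $\mathbf{e}_{K+1}\mathbf{e}_{K+1}^\top \bullet \bm U = 1$, each quadratic equality becomes $\bm{\widehat C}_i \bullet \bm U = 0$, and $\uu \in \K$ lifts to $\bm U \in \CP(\K)$; so \eqref{equ:qp-cp} is exactly the completely positive relaxation of \eqref{equ:qp}. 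The weak direction (optimal value of \eqref{equ:qp-cp} is at least that of \eqref{equ:qp}) is immediate since any feasible $\uu$ for \eqref{equ:qp} yields the rank-one feasible point $\uu\uu^\top$ for \eqref{equ:qp-cp}.

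\textbf{Invoking the exactness theorem.} The substantive content is the reverse inequality and the recovery statement (ii). Here I would cite Burer's theorem, whose hypothesis is precisely that the feasible set of the linear relaxation (the set $\Uzero = \{\uu \in \K : \mathbf{e}_{K+1}^\top \uu = 1\}$ obtained by dropping the nonconvex quadratic constraints) is bounded --- this is exactly Assumption~\ref{ass:bounded}, which also guarantees nonemptiness so the problem is well posed. Under this condition Burer shows that the completely positive relaxation is exact: the optimal values coincide, and moreover any optimal $\bm U^\star$ of the lift decomposes as a finite sum $\sum_i \uu^i(\uu^i)^\top$ with each $\uu^i \in \K$ feasible and optimal for \eqref{equ:qp}, and normalized so that $\mathbf{e}_{K+1}^\top \uu^i = 1$. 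Extracting the first coordinate via $\bm U^\star \mathbf{e}_1$ (recall $\mathbf{e}_1$ picks out the column), one gets a convex combination of these $\uu^i$, hence a point in the convex hull of optimal solutions of \eqref{equ:qp}, which is conclusion (ii). I would also note the mild technical point that Burer's result is usually stated for $\K = \RR_+^n$ but extends verbatim to general closed convex cones $\K$, as observed in the generalized copositive programming literature cited earlier; alternatively one can absorb this by noting the argument only uses conic duality and the boundedness of the affine slice.

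\textbf{Main obstacle.} Since the lemma is quoted rather than reproved, there is no deep obstacle to overcome in a self-contained argument; the only real care needed is in checking that our data genuinely fits Burer's template --- specifically that the lone linear equality $\mathbf{e}_{K+1}^\top \uu = 1$ together with the cone $\K$ and the quadratic equalities $\uu^\top \bm{\widehat C}_i \uu = 0$ matches the normalization-plus-quadratic-constraint structure Burer requires, and that boundedness of $\Uzero$ (not of $\U$ itself) is the right hypothesis to cite. Note in particular that Assumption~\ref{ass:quadcons} is \emph{not} needed for Lemma~\ref{lem:burer} as stated; it will enter later when the quadratic equalities are used to model complementarity. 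If one instead wanted a fully self-contained proof, the hard part would be establishing attainment and the exact extreme-point decomposition of optimal solutions of \eqref{equ:qp-cp}, which is where boundedness is used to rule out recession directions in the lifted cone; but given the citation, I would simply defer to~\cite{Burer.2012} for that.
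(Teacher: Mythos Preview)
The paper does not supply its own proof of this lemma: it is stated purely as a citation to \cite{Burer.2012}, Corollary~8.4 and Theorem~8.3, with no accompanying argument. Your proposal, which verifies that the data of \eqref{equ:qp}--\eqref{equ:qp-cp} fit Burer's template and that Assumption~\ref{ass:bounded} supplies the required boundedness hypothesis, is therefore already more than the paper itself provides, and it is correct in substance.

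One small point worth flagging: in the recovery statement (ii), note that $\bm U^\star \mathbf{e}_1$ picks out the \emph{first} column, whereas the normalization in \eqref{equ:qp} fixes $u_{K+1}=1$, not $u_1=1$. If $\bm U^\star = \sum_i \lambda_i \uu^i(\uu^i)^\top$ with $\sum_i\lambda_i=1$ and each $\uu^i$ feasible for \eqref{equ:qp}, then it is $\bm U^\star \mathbf{e}_{K+1} = \sum_i \lambda_i \uu^i$ that is manifestly a convex combination of optimal solutions; the vector $\bm U^\star \mathbf{e}_1 = \sum_i \lambda_i u^i_1\, \uu^i$ need not have coefficients summing to one. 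This appears to be a typographical slip in the lemma's statement rather than a flaw in your reasoning, but your sentence ``one gets a convex combination of these $\uu^i$'' glosses over it; you may want to note the discrepancy explicitly.
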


\begin{lemma} \label{lem:zero}
Suppose  Assumption \ref{ass:bounded} holds. Then,  for any  $(\bm z, \tau) \in \K$, 
we have $\tau=0$ implies $\bm z=\bm 0$. 
\end{lemma}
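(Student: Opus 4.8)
The plan is to argue by contradiction, exploiting the compactness of $\Uzero$ from Assumption~\ref{ass:bounded}. Suppose there exists $(\bm z, \tau) \in \K$ with $\tau = 0$ but $\bm z \neq \bm 0$. Since $\Uzero$ is nonempty, I would first fix any point $\uu_0 = (\bm w, 1) \in \Uzero \subseteq \K$; its last coordinate equals $1$ because $\mathbf{e}_{K+1}^\top \uu_0 = 1$.

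Next I would use that $\K$ is a convex cone, hence closed under addition and nonnegative scalar multiplication. Therefore, for every $\lambda \geq 0$ the vector $\uu_0 + \lambda (\bm z, 0) = (\bm w + \lambda \bm z,\, 1)$ still lies in $\K$, and its last coordinate is still $1$, so $\mathbf{e}_{K+1}^\top(\uu_0 + \lambda(\bm z, 0)) = 1$. Consequently $\uu_0 + \lambda(\bm z, 0) \in \Uzero$ for all $\lambda \geq 0$; in other words $(\bm z, 0)$ is a recession direction of $\Uzero$.

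Finally I would derive the contradiction: since $\bm z \neq \bm 0$, we have $\|\uu_0 + \lambda(\bm z, 0)\| \geq \lambda\|\bm z\| - \|\uu_0\| \to \infty$ as $\lambda \to \infty$, so $\Uzero$ contains an unbounded ray, contradicting its compactness. Hence no such $(\bm z,0)$ with $\bm z \neq \bm 0$ exists, i.e., $\tau = 0$ forces $\bm z = \bm 0$.

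There is essentially no hard step here; the only point to check carefully is that the constructed points genuinely stay in $\Uzero$ — both cone membership (which uses closure of a convex cone under sums and nonnegative scalings) and the normalization $\mathbf{e}_{K+1}^\top \uu = 1$ (preserved because the perturbation direction $(\bm z,0)$ has zero last coordinate). Equivalently, one can phrase the whole argument as: the recession cone of the bounded convex set $\Uzero$ is $\{\bm 0\}$, while every $(\bm z,0) \in \K$ is a recession direction of $\Uzero$, so $\bm z = \bm 0$.
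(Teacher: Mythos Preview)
Your proof is correct and follows essentially the same argument as the paper: fix a point in $\Uzero$, add nonnegative multiples of the direction $(\bm z,0)\in\K$ to produce an unbounded ray in $\Uzero$, and contradict compactness. The only differences are cosmetic---you spell out the cone closure properties and the norm blow-up more explicitly, and rephrase the conclusion in terms of recession directions---but the underlying idea is identical.
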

\begin{proof}
See the Appendix. 
\end{proof}

The dual of problem (\ref{equ:qp-cp}) is given by the following linear program over the cone of 
copositive matrices with respect to $\K$: 
\begin{equation} \label{equ:qp-cop}
\begin{array}{ll}
\inf &   \lambda  \\
\st  & \lambda \, \mathbf{e}_{K+1}\mathbf{e}_{K+1}^\top +  \sum\limits_{i=1}^I \alpha_i \bm{\widehat C}_i - \bm{\widehat C}_0 \in \COP(\K)  \\
      & \lambda \in \RR, \ \bm \alpha \in \RR^I.  
\end{array}
\end{equation}
Our next technical lemma establishes strong duality for the primal and dual pair. %$ between problems (\ref{equ:qp-cp})
%and (\ref{equ:qp-cop}).
\begin{lemma} \label{lem:interior}
Suppose Assumption~\ref{ass:bounded} holds. Then,
strong duality holds between problems (\ref{equ:qp-cp}) and~(\ref{equ:qp-cop}). 
\end{lemma}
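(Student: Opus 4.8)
The plan is to establish strong duality between the completely positive program~(\ref{equ:qp-cp}) and its copositive dual~(\ref{equ:qp-cop}) by invoking conic duality theory, which reduces the task to exhibiting a strictly feasible (Slater) point for one of the two problems. Since $\CP(\K)$ and $\COP(\K)$ are mutually dual closed convex cones, it suffices to produce a point in the relative interior of the feasible region of~(\ref{equ:qp-cp}); concretely, I would look for a matrix $\bm{U}$ that lies in the \emph{interior} of $\CP(\K)$ (relative to the affine hull of the feasible set) while satisfying the linear equations $\mathbf{e}_{K+1}\mathbf{e}_{K+1}^\top \bullet \bm{U} = 1$ and $\bm{\widehat C}_i \bullet \bm{U} = 0$ for all $i \in [I]$.

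The natural candidate is an ``averaged'' rank-one aggregate built from optimal (or merely feasible) solutions of the nonconvex program~(\ref{equ:qp}). First I would note that, by Assumption~\ref{ass:bounded}, $\Uzero$ is nonempty and compact, and by Assumption~\ref{ass:quadcons} each quadratic $\uu^\top\bm{\widehat C}_i\uu$ attains value $0$ on $\Uzero$; hence the feasible set of~(\ref{equ:qp}) is nonempty. Pick points $\uu^1,\dots,\uu^m$ feasible for~(\ref{equ:qp}) whose conic hull spans $\K$ as fully as possible, and set $\bm{U} := \tfrac1m\sum_{j=1}^m \uu^j(\uu^j)^\top$. By construction $\bm{U} \in \CP(\K)$; moreover $\mathbf{e}_{K+1}^\top \uu^j = 1$ gives $\mathbf{e}_{K+1}\mathbf{e}_{K+1}^\top\bullet\bm{U} = 1$, and $(\uu^j)^\top\bm{\widehat C}_i\uu^j = 0$ gives $\bm{\widehat C}_i\bullet\bm{U} = 0$, so the linear constraints hold. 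The remaining point is to argue that such a $\bm{U}$ can be chosen in the relative interior of $\CP(\K)$ within the relevant affine subspace — equivalently, that the feasible solutions $\uu^j$ of~(\ref{equ:qp}) are not all trapped in a common proper face of $\K$. Here Lemma~\ref{lem:zero} and the normalization $\mathbf{e}_{K+1}^\top\uu = 1$ (which forces the last coordinate to be strictly positive, since $\tau = 0$ would imply $\uu = \bm 0$, contradicting $\mathbf{e}_{K+1}^\top \uu = 1$) are what prevent degeneracy and allow the standard relative-interior argument for $\CP$ cones to go through.

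Alternatively, and perhaps more cleanly, I would invoke strong duality in the opposite direction by producing a Slater point for the copositive side~(\ref{equ:qp-cop}): taking $\lambda$ sufficiently large and $\bm\alpha = \bm 0$ makes $\lambda\,\mathbf{e}_{K+1}\mathbf{e}_{K+1}^\top - \bm{\widehat C}_0$ — note the compactness of $\Uzero$ bounds $\uu^\top\bm{\widehat C}_0\uu$ from above on the normalized slice — strictly copositive on $\K$, i.e., in the interior of $\COP(\K)$, provided one is careful about the unbounded directions of $\K$; this is again where Lemma~\ref{lem:zero} is the crucial ingredient, since it rules out nonzero recession directions with zero last coordinate on which the quadratic form could fail to be controlled. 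Combined with the fact that~(\ref{equ:qp-cp}) is feasible (shown above) and has finite optimal value (by Lemma~\ref{lem:burer} it equals the optimal value of the nonconvex but bounded problem~(\ref{equ:qp})), the conic duality theorem then yields zero duality gap and attainment.

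The main obstacle I anticipate is the relative-interior/Slater verification in the presence of a general closed convex cone $\K$ that need not be full-dimensional or polyhedral: one must carefully identify the affine hull of the feasible set of~(\ref{equ:qp-cp}) and show the candidate point lies in the interior relative to that hull, handling the recession cone of $\K$ so that strict copositivity (or complete positivity) is genuinely attained rather than merely approached. Lemma~\ref{lem:zero} — which pins down that the only boundary behavior compatible with the normalization $\mathbf{e}_{K+1}^\top\uu=1$ is benign — is precisely the tool designed to clear this obstacle, so the proof should amount to assembling Lemmas~\ref{lem:burer} and~\ref{lem:zero} with a routine conic-duality citation.
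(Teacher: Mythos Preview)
Your \emph{alternative} approach---exhibiting a Slater point for the copositive program~(\ref{equ:qp-cop}) by setting $\bm\alpha=\bm 0$ and choosing $\lambda$ large enough that $\lambda\,\mathbf{e}_{K+1}\mathbf{e}_{K+1}^\top-\bm{\widehat C}_0$ lies in the interior of $\COP(\K)$, using Lemma~\ref{lem:zero} to rule out nonzero $(\bm z,\tau)\in\K$ with $\tau=0$ and then the compactness of $\Uzero$ to bound $((\bm z/\tau)^\top,1)\bm{\widehat C}_0((\bm z/\tau)^\top,1)^\top$---is exactly the paper's proof. You have correctly identified both the construction and the role of Lemma~\ref{lem:zero}.

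Your \emph{first} approach, however, has a genuine gap. The lemma is stated under Assumption~\ref{ass:bounded} alone, yet your construction of a primal Slater point requires feasible points of~(\ref{equ:qp}), i.e., vectors $\uu\in\Uzero$ satisfying all quadratic equalities $\uu^\top\bm{\widehat C}_i\uu=0$ simultaneously. Assumption~\ref{ass:quadcons} (which you invoke but the lemma does not assume) only guarantees that each quadratic individually attains zero on $\Uzero$, not that they do so jointly; and even granting nonemptiness of $\U$, pushing the averaged matrix $\bm U$ into the relative interior of $\CP(\K)$ within the affine slice cut out by the constraints $\bm{\widehat C}_i\bullet\bm U=0$ is genuinely delicate---feasible points of~(\ref{equ:qp}) may well all sit on a proper face of $\K$, so the ``spanning'' argument you sketch need not go through. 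The dual-side Slater argument sidesteps all of this, which is why the paper (and, wisely, your second paragraph) takes that route.
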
 
\begin{proof}
See the Appendix. 
\end{proof}

In the following, we define the auxiliary matrices:
\begin{equation} \label{equ:ABmat}
\bm{\widehat \Theta}_j := 
\begin{pmatrix} \mathbf{e}_j^\top\bm{\widehat A}_1 \\ \vdots \\ \mathbf{e}_j^\top\bm{\widehat A}_{K+1} \end{pmatrix}  \in \RR^{(K+1) \times M}, \ \ \ 
\bm{\widehat \Lambda}_j := \begin{pmatrix} \mathbf{e}_j^\top\bm{\widehat B}_1 \\ \vdots \\ \mathbf{e}_j^\top\bm{\widehat B}_{K+1} \end{pmatrix}  \in \RR^{(K+1) \times N}, \;\;\textup{ and }
\end{equation}
\begin{equation} \label{equ:OmegaxY}
\bm{\Omega}_j\left( \bm x, \,  \bm Y\right) :=  \frac12 \left( \bm{\widehat \Theta}_j \x \mathbf{e}_{K+1}^\top + \mathbf{e}_{K+1}\x^\top \bm{\widehat \Theta}_j^\top +
\bm{\widehat \Lambda}_j \bm{Y} + \bm{Y}^\top\bm{\widehat \Lambda}_j^\top - \bm{\widehat h}_j\mathbf{e}_{K+1}^\top - \mathbf{e}_{K+1}\bm{\widehat h}_j^\top \right) \ \ \forall \, j \in [J],
\end{equation}
where $\mathbf{e}_j$ represents the $j$th standard basis vector in $\RR^J$. 
We are now ready to state our main result. 
\begin{theorem} \label{the:ldr-ref}
%{\color{blue} Suppose that Assumptions \ref{ass:bounded} and \ref{ass:quadcons} hold.}  
Problem (\ref{equ:ldr}) is equivalent to the copositive program
\begin{equation} \label{equ:cop_ldr}
\begin{array}{rcll} 
Z^{\LDR}=&\inf  & \cc^{\top} \x + \lambda \\
&\st  & \lambda \, \mathbf{e}_{K+1}\mathbf{e}_{K+1}^\top -  \dfrac12 \left( \mathbf{\widehat D}^\top\bm{Y} + \bm{Y}^\top \mathbf{\widehat D} \right)  + \sum\limits_{i=1}^I \alpha_i \bm{\widehat C}_i \in \COP(\K) \\
&     &  \bm{\Omega}_j\left(\bm x, \, \bm Y\right)   - \pi_j \, \mathbf{e}_{K+1}\mathbf{e}_{K+1}^\top - \sum\limits_{i=1}^I  [\bm{\beta}_j]_i \, \bm{\widehat C}_i  \in \COP(\K) \ \ \ \forall \, j \in [J] \\     
&     &\x \in \mathcal X,\, \  \lambda \in \RR,\,  \ \bm{Y} \in \RR^{N \times(K+1)}, \ \bm \pi \in \RR_+^J, \ \bm \alpha \in \RR^I, \  \bm \beta_j \in \RR^I \quad \forall \, j \in [J], %,\;\bm{\pi} \geq 0,  \  \x \in \X. 
\end{array}
\end{equation}
where the affine functions $\bm{\Omega}_j(\bm x, \bm Y)$, $ j \in [J]$, are defined as in (\ref{equ:OmegaxY}).  
\end{theorem}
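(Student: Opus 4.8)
The plan is to eliminate the semi-infinite constraints in (\ref{equ:ldr}) one at a time by recognizing each as a (homogenized) nonconvex quadratic feasibility question over the cone $\K$, and then invoke Lemmas \ref{lem:burer}--\ref{lem:interior} to replace each such question by a copositive (dually, completely positive) condition. First I would rewrite the objective: since $u_{K+1}=1$ on $\U$, we have $\dd(\uu)^\top(\bm Y\uu) = (\mathbf{\widehat D}\uu)^\top\bm Y\uu = \uu^\top\mathbf{\widehat D}^\top\bm Y\uu = \tfrac12\,\uu^\top(\mathbf{\widehat D}^\top\bm Y + \bm Y^\top\mathbf{\widehat D})\uu$, so that $\sup_{\uu\in\U}\dd(\uu)^\top\bm Y\uu$ is exactly a problem of the form (\ref{equ:qp}) with $\bm{\widehat C}_0 = \tfrac12(\mathbf{\widehat D}^\top\bm Y + \bm Y^\top\mathbf{\widehat D})$. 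By Lemma \ref{lem:burer} its value equals that of the completely positive relaxation (\ref{equ:qp-cp}), and by Lemma \ref{lem:interior} strong duality lets me write this value as the optimal value of the copositive dual (\ref{equ:qp-cop}); introducing the scalar $\lambda$ and multipliers $\bm\alpha$ and moving the $\sup$ into the objective via $\cc^\top\x + \lambda$ with the first copositive constraint of (\ref{equ:cop_ldr}) then captures the objective exactly.

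Next I would handle the constraint system $\A(\uu)\x + \BB(\uu)\bm Y\uu \ge \hh(\uu)$ for all $\uu\in\U$, treating each row $j\in[J]$ separately. The $j$-th row reads $\mathbf{e}_j^\top\A(\uu)\,\x + \mathbf{e}_j^\top\BB(\uu)\,\bm Y\uu \ge \mathbf{\widehat h}_j^\top\uu$. Using the linear parametrization $\A(\uu)=\sum_k u_k\mathbf{\widehat A}_k$ and $\BB(\uu)=\sum_k u_k\mathbf{\widehat B}_k$, the term $\mathbf{e}_j^\top\A(\uu)\x = \sum_k u_k (\mathbf{e}_j^\top\mathbf{\widehat A}_k\x) = \uu^\top\mathbf{\widehat\Theta}_j\x$ and $\mathbf{e}_j^\top\BB(\uu)\bm Y\uu = \uu^\top\mathbf{\widehat\Lambda}_j\bm Y\uu$, while $\mathbf{\widehat h}_j^\top\uu = \uu^\top\mathbf{\widehat h}_j$; multiplying the linear terms by $u_{K+1}=1$ to homogenize and symmetrizing yields precisely $\uu^\top\bm\Omega_j(\x,\bm Y)\uu \ge 0$ for all $\uu\in\U$, with $\bm\Omega_j$ as in (\ref{equ:OmegaxY}). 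This is the statement that $\min_{\uu\in\U}\uu^\top\bm\Omega_j(\x,\bm Y)\uu \ge 0$, i.e. a problem of type (\ref{equ:qp}) (after a sign flip, $\bm{\widehat C}_0 = -\bm\Omega_j$), whose value is nonnegative. Applying Lemma \ref{lem:burer} to equate it with the completely positive relaxation and then Lemma \ref{lem:interior} for strong duality, the condition ``optimal value $\ge 0$'' becomes feasibility of the copositive dual at level $0$: there exist $\pi_j\in\RR$ and $\bm\beta_j\in\RR^I$ with $\bm\Omega_j(\x,\bm Y) - \pi_j\,\mathbf{e}_{K+1}\mathbf{e}_{K+1}^\top - \sum_i[\bm\beta_j]_i\bm{\widehat C}_i \in \COP(\K)$ and $\pi_j \ge 0$ (the sign restriction $\bm\pi\in\RR_+^J$ coming from dualizing the $\ge$ rather than $=$ normalization, equivalently from the fact that we only need a nonnegativity certificate, not an exact one). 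Collecting the objective reformulation and the $J$ row reformulations gives (\ref{equ:cop_ldr}).

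Two technical points need care. First, Lemma \ref{lem:burer} as quoted concerns problem (\ref{equ:qp}) with the normalization $\mathbf{e}_{K+1}^\top\uu=1$, so I must check that the feasible region of (\ref{equ:qp}) is exactly $\U$ — this holds by definition (\ref{equ:uncertainty}) and the fact that $\K\subseteq\RR^K\times\RR_+$ — and that Assumptions \ref{ass:bounded}--\ref{ass:quadcons} are exactly the hypotheses needed: Assumption \ref{ass:bounded} gives compactness of $\Uzero$ (hence of the lifted feasible set, via Lemma \ref{lem:zero} which rules out recession directions with $\tau=0$) and Assumption \ref{ass:quadcons} guarantees the $\bm{\widehat C}_i$ constraints are not vacuously infeasible or sign-inconsistent. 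Second, in the dualization for each row I am asserting that ``$\min \ge 0$'' is equivalent to ``the copositive dual is feasible with objective value $0$''; this requires that the completely positive program (\ref{equ:qp-cp}) attains its optimum (so that Lemma \ref{lem:burer}(i) applies) and that strong duality holds with no gap (Lemma \ref{lem:interior}), and it requires observing that a feasible dual solution of value $\le 0$ forces a dual solution of value exactly $0$ by the structure of the constraint (one can always increase $\lambda$, i.e. here $\pi_j$, and stay copositive since $\mathbf{e}_{K+1}\mathbf{e}_{K+1}^\top\in\COP(\K)$). \textbf{The main obstacle} I anticipate is the bookkeeping of this last equivalence direction together with the interplay of the two sign conventions — the objective row dualizes the equality $\mathbf{e}_{K+1}^\top\uu=1$ producing a free $\lambda$, whereas the feasibility rows effectively dualize an inequality producing the nonnegative $\pi_j$ — and verifying that the affine-in-$\uu$ terms homogenize correctly into the symmetric matrices $\bm\Omega_j$ without dropping a factor of $\tfrac12$ or a cross term; the conic duality machinery itself is entirely supplied by the three lemmas.
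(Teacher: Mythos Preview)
Your proposal is correct and mirrors the paper's proof exactly: recast the inner supremum and each row of the semi-infinite constraint as instances of (\ref{equ:qp}), invoke Lemma~\ref{lem:burer} for the completely positive reformulation and Lemma~\ref{lem:interior} for strong duality, and read off the copositive constraints of (\ref{equ:cop_ldr}) with $\lambda,\bm\alpha$ free and $\pi_j\ge 0$. One minor slip in your discussion of obstacles: increasing $\pi_j$ \emph{subtracts} the copositive matrix $\mathbf{e}_{K+1}\mathbf{e}_{K+1}^\top$ and so does \emph{not} preserve feasibility --- the monotonicity runs the other way --- but the paper treats this step at the same level of detail you do, simply asserting the equivalence once strong duality delivers $\sup\pi_j=\inf_{\uu\in\U}\uu^\top\bm\Omega_j(\x,\bm Y)\uu\ge 0$.
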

\begin{proof}
See the Appendix. 
\end{proof}

%{\color{red} In this subsection, we present decision rule problems for the two-stage problem in~\eqref{equ:tsro}. In the situation where the recourse matrix is non-fixed, we derive a linear decision rule problem. In the situation where the recourse matrix is fixed, we derive a quadratic decision rule problem. We also propose enhancedd decision rule problems by enabling piecewise functions to characterize the uncertainty set.}

\subsection{Quadratic decision rules for problems with fixed recourse} \label{sec:qdr}

We now study two-stage robust optimization problems with fixed recourse. In this simpler setting, the second-stage cost coefficients and the recourse matrix are deterministic, i.e., 
\[
\begin{array}{l}
\dd(\uu)=\bm{\widehat d}\in \RR^N\quad \textup{ and } \quad \BB(\uu)=\bm{\widehat B} \in \RR^{J\times N}\qquad \forall \, \uu \in \RR^{K+1}.
\end{array}
\]
%respectively, where 
%$\bm{\widehat d} \in \RR^N$ and $\bm{\widehat B} \in \RR^{J\times N}$ are 
%deterministic data. 
Using techniques developed in the previous section, we will derive a copositive 
programming reformulation by applying decision rules to the recourse action 
$\bm y:\mathcal U\rightarrow\RR^N$. Since $\dd(\uu)$ and $\BB(\uu)$ are 
constant, we may  utilize the more powerful quadratic decision rules defined as
\[
[\bm y(\uu)]_n = \uu^\top\bm Q_n\uu \qquad \forall \, n \in [N],% \ \ \forall \, \uu \in \RR^{K+1},
\]
for some coefficient matrices $\bm Q_n\in \SYM^{K+1}$, $n \in [N]$. 
This yields the following conservative approximation of problem \eqref{equ:tsro}:
\begin{equation} \label{equ:qdr-ref}
\tag{$\QDR$}
\begin{array}{rcll} 
Z^{\QDR}=&\inf  & \cc^{\top}\x +\sup\limits_{\uu \in \U} \sum\limits_{n=1}^N  \widehat d_n \uu^\top \bm{Q}_n \uu  \\
&\st  &  \uu^\top\bm{\widehat \Theta}_j \x +  \sum\limits_{n=1}^N  \widehat b_{j n} \uu^\top\bm{Q}_n\uu \geq \bm{\widehat h}_j^\top \uu  \ \ \ \forall \, \uu \in \U \ \forall \, j \in [J] \\
&& \x \in \X,\;\bm Q_n \in \SYM^{K+1} \ \ \ \forall \, n \in [N]. 
\end{array}
\end{equation}
In view of the restriction $u_{K+1} = 1$ in the description of $\U$, 
the decision rule $[\yy(\uu)]_n = \uu^\top\bm{Q}_n\uu$ constitutes a 
homogenized version of a non-homogenized quadratic function in the 
primitive vector $(u_1, \ldots, u_K)^\top$. We remark that optimizing for the 
best quadratic decision rule is generically NP-hard \cite[Section 14.3.2]{Ben-Tal.Ghaoui.Nemirovski.2009}. 
This strongly justifies our proposed copositive programming reformulation, which we derive in the following theorem.   To that end, we define the affine functions
\begin{equation} \label{equ:GxQ}
\bm{\Gamma}_j\left( \bm x, \bm Q_1, \ldots, \bm Q_N \right) :=  \dfrac12 \left( \bm{\widehat \Theta}_j \x \mathbf{e}_{K+1}^\top + \mathbf{e}_{K+1}\x^\top \bm{\widehat \Theta}_j^\top - \mathbf{e}_{K+1}\bm{\widehat h}_j^\top - \bm{\widehat h}_j \mathbf{e}_{K+1}^\top \right) + \sum \limits_{n=1}^N \,  \widehat b_{jn} \bm{Q}_n\qquad\forall j\in[J].
\end{equation}
\begin{theorem} \label{thm:qp} %{\color{blue} Suppose that Assumptions \ref{ass:bounded} and \ref{ass:quadcons} hold.}  
Problem (\ref{equ:qdr-ref}) is equivalent to the copositive program
\begin{equation} \label{equ:cop_qdr}
\begin{array}{rcl}
Z^{\QDR}&=\min  & \cc^\top \x + \lambda \\
&\st & \lambda \, \mathbf{e}_{K+1}\mathbf{e}_{K+1}^\top - \sum \limits_{n=1}^N \, \widehat d_n \bm{Q}_n + \sum\limits_{i=1}^I  \alpha_i \bm{\widehat C}_i \in \COP(\K) \\
&    & \bm{\Gamma}_j\left( \bm x, \bm Q_1, \ldots, \bm Q_N\right) - \pi_j \, \mathbf{e}_{K+1}\mathbf{e}_{K+1}^\top - \sum\limits_{i=1}^I  [\bm \beta_j]_i \,  \bm{\widehat C}_i  \in \COP(\K)  \ \ \ \forall \, j \in [J] \\
&    & \x \in \X,\ \lambda \in \RR, \  \bm \alpha \in \RR^I, \ \bm{\pi} \in\RR_+^J, \  \bm Q_n\in\SYM^{K+1} \ \ \forall \, n \in [N], \ \bm \beta_j \in \RR^I \ \ \forall \, j \in [J],
\end{array}
\end{equation}
where the affine functions $ \bm{\Gamma}_j\left( \bm x, \bm Q_1, \ldots, \bm Q_N\right)$,  $j \in [J]$, are defined as in (\ref{equ:GxQ}).
\end{theorem}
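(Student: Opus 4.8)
The plan is to follow the template established in the proof of Theorem~\ref{the:ldr-ref}: every inner optimization over $\U$ appearing in~\eqref{equ:qdr-ref} will be cast as an instance of the nonconvex quadratic program~\eqref{equ:qp}, reformulated exactly as a completely positive program via Lemma~\ref{lem:burer}, and then dualized to a copositive constraint using the strong duality guaranteed by Lemma~\ref{lem:interior}. The reason this works under fixed recourse is that $\dd(\uu)$ and $\BB(\uu)$ are constant, so---after homogenizing with $\mathbf{e}_{K+1}^\top\uu=1$---every term in~\eqref{equ:qdr-ref} becomes a homogeneous quadratic form in $\uu$.

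First I would treat the worst-case objective. The term $\sum_{n=1}^N\widehat d_n\uu^\top\bm Q_n\uu$ equals $\uu^\top\big(\sum_{n=1}^N\widehat d_n\bm Q_n\big)\uu$, so setting $\bm{\widehat C}_0:=\sum_{n=1}^N\widehat d_n\bm Q_n$ and applying Lemma~\ref{lem:burer} rewrites $\sup_{\uu\in\U}\sum_n\widehat d_n\uu^\top\bm Q_n\uu$ as the completely positive program~\eqref{equ:qp-cp}. By Lemma~\ref{lem:interior} its copositive dual~\eqref{equ:qp-cop} attains the same optimal value; substituting that value $\lambda$ back into~\eqref{equ:qdr-ref} yields the objective $\cc^\top\x+\lambda$ together with the first copositive constraint of~\eqref{equ:cop_qdr}.

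Next I would handle the semi-infinite constraint for each fixed $j\in[J]$, which is equivalent to $\inf_{\uu\in\U}\big(\uu^\top\bm{\widehat\Theta}_j\x+\sum_n\widehat b_{jn}\uu^\top\bm Q_n\uu-\bm{\widehat h}_j^\top\uu\big)\ge 0$. The crucial bookkeeping step is the homogenization on $\U$: since $\mathbf{e}_{K+1}^\top\uu=1$ there, $\uu^\top\bm{\widehat\Theta}_j\x=\tfrac12\uu^\top(\bm{\widehat\Theta}_j\x\,\mathbf{e}_{K+1}^\top+\mathbf{e}_{K+1}\x^\top\bm{\widehat\Theta}_j^\top)\uu$ and $\bm{\widehat h}_j^\top\uu=\tfrac12\uu^\top(\mathbf{e}_{K+1}\bm{\widehat h}_j^\top+\bm{\widehat h}_j\mathbf{e}_{K+1}^\top)\uu$, so the bracketed expression equals $\uu^\top\bm{\Gamma}_j(\bm x,\bm Q_1,\dots,\bm Q_N)\uu$ with $\bm{\Gamma}_j$ exactly as defined in~\eqref{equ:GxQ}. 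Applying Lemma~\ref{lem:burer} in its minimization form, dualizing as in~\eqref{equ:icons-cpp}, and invoking Lemma~\ref{lem:interior} for zero duality gap, the requirement that this infimum be nonnegative becomes the existence of $\pi_j\ge 0$ and $\bm\beta_j\in\RR^I$ with $\bm{\Gamma}_j(\bm x,\bm Q_1,\dots,\bm Q_N)-\pi_j\mathbf{e}_{K+1}\mathbf{e}_{K+1}^\top-\sum_i[\bm\beta_j]_i\bm{\widehat C}_i\in\COP(\K)$. The sign restriction on $\pi_j$ is admissible because $\mathbf{e}_{K+1}\mathbf{e}_{K+1}^\top$ is positive semidefinite, hence copositive, so decreasing $\pi_j$ preserves membership in $\COP(\K)$. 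Collecting these $J$ constraints with the objective reformulation gives precisely~\eqref{equ:cop_qdr}.

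I do not expect a serious obstacle, as this is essentially a transcription of the proof of Theorem~\ref{the:ldr-ref} with the linear decision rule $\bm Y\uu$ replaced by the quadratic one $\uu^\top\bm Q_n\uu$. The only point warranting care is that the fixed-recourse hypothesis is exactly what keeps $\sum_n\widehat b_{jn}\uu^\top\bm Q_n\uu$ quadratic rather than cubic in $\uu$: were $\BB(\uu)$ or $\dd(\uu)$ uncertain, their product with a degree-two decision rule would have degree three and would no longer fit the completely positive reformulation of Lemma~\ref{lem:burer}. Once this is observed, assembling the constant, linear, and quadratic contributions into the matrix $\bm{\Gamma}_j$ of~\eqref{equ:GxQ} is routine linear algebra.
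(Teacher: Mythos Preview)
Your proposal is correct and follows essentially the same route as the paper's own proof: recognize each inner problem over $\U$ as a nonconvex quadratic in the form of~\eqref{equ:qp}, invoke Lemma~\ref{lem:burer} for the completely positive reformulation, and then Lemma~\ref{lem:interior} for zero duality gap with the copositive dual. Your explicit homogenization showing that the $j$-th constraint integrand equals $\uu^\top\bm{\Gamma}_j(\cdot)\uu$, and your monotonicity remark justifying why one may restrict to $\pi_j\ge 0$, are helpful clarifications that the paper leaves implicit.
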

\begin{proof}
See the Appendix.
\end{proof}

\subsection{Enhanced decision rules} \label{sec:enhanced}

In this section, we tighten the basic decision rule approximations by employing piecewise linear and piecewise quadratic decision rules. 
While piecewise quadratic decision rules are new concept, piecewise linear decision rules have been studied extensively in the 
literature~\cite{Chen.Zhang.2009,Georghiou.Wiesemann.Kuhn.2015}. Their utilization is supported by a strong theoretical justification: 
For problems with fixed recourse, the optimal recourse action $\bm y(\cdot)$ can be described by a piecewise linear continuous 
function~\cite{Bemporad.Borrelli.Morari.2003}. However, optimizing for the best piecewise linear decision rule is NP-hard even if the 
folding directions and their respective breakpoints are prescribed a priori~\cite[Theorem 4.2]{Georghiou.Wiesemann.Kuhn.2015}. 
As such, one has to rely on another layer of tractable conservative approximation. Unfortunately, the state-of-the-art  approaches are 
futile even in the simplest robust optimization settings (see Example \ref{exa:partition} below). Here, we endeavor to derive tighter 
approximations using copositive programming. 

To this end, for a prescribed number of pieces~$L$, we define the  mappings
\begin{equation}
\label{eq:pw_mappings}
F_\ell(\bm u)=\max\{0,\bm f_\ell^\top \uu  \} \qquad \forall \, \bm u \in \RR^{K+1} \ \forall \, \ell \in [L].
\end{equation} 
Here, $\bm f_\ell:=(\bm g_\ell,-h_\ell)\in\RR^{K+1}$, where  $\bm g_\ell\in\RR^{K}$ denotes the folding direction of the $\ell$-th 
mapping, while $h_\ell$ defines its breakpoint. These mappings constitute the building blocks of our improved decision rules. Specifically, 
by applying the basic linear and 
quadratic decision rules on the lifted uncertain parameter vector 
$\bm v:=(F_1(\bm u), \ldots, F_L(\bm u), \bm u) \in \RR^{L+K+1}$, we arrive at the desired piecewise linear and piecewise quadratic decision rules, respectively. 

\begin{example}[Integer Programming Feasibility Problem]
	\label{exa:IP_Feas}
	Consider a norm maximization problem given by $\max_{\bm u\in\U} \|\bm u\|_1$, 
	where $\U = \{ \bm u \in \RR^K: \bm P \bm u \leq \bm q\} \subseteq[-1,1]^K$ is a 
	prescribed polytope.  
	An elementary analysis shows that the optimal 
	value of this problem is equal to $K$ if and only if there exists a binary vector $\bm u \in\{-1,1\}^{K}$ 
	within the polytope $\U$. Thus, it solves the NP-hard Integer Programming (IP) feasibility 
	problem~\cite{Garey.Johnson.1979}.  We can reformulate the norm maximization problem 
	as a two-stage robust optimization problem, without a first-stage decision $\bm x$, given by
	\begin{equation*} 
	\begin{array}{cl} 
	\inf &  \sup \limits_{\uu \in \U} \displaystyle\mathbf e^\top\yy(\uu) \\
	\st  & \yy(\uu)\geq \bm  u, \; \yy(\uu) \geq  -\bm u  \  \  \  \forall \, \uu \in \U  \\
	&\yy \in \FF_{K, \, K}.
	\end{array}
	\end{equation*}
	Indeed, at optimality we have $[\bm y(\bm u)]_k = |u_k|$, which implies that $\mathbf e^\top\yy(\uu)=\|\bm u\|_1$.  
	Consider now the mappings 
	\begin{equation*}
	F_\ell(\bm u) = \max\{ 0, \, u_\ell \} \qquad \forall \, \bm u \in \RR^{K} \ \forall \, \ell \in [K].
	\end{equation*} 
	Our previous argument shows that the piecewise linear decision rule given by
	\begin{equation*}
	[\bm y(\bm u)]_\ell= -u_\ell + 2F_\ell(\bm u) = -u_\ell + \max\{ 0, \, 2u_\ell \} = |u_\ell|\qquad\forall \ell\in[K]
	\end{equation*}
	is optimal. This decision rule is linear in the lifted parameter 
	vector $(F_1(\bm u),\dots,F_K(\bm u),\bm u)$.  
\end{example}

To formalize the idea into our setting, we define the lifted set 
\begin{equation}
\label{eq:uncertainty_expand}
\Ubar := \left\{
\bm v := (\bm w, \uu) \in \RR^{L}\times \U :~
w_\ell = F_\ell(\uu) \quad \forall \,  \ell \in [L]
%\end{array}
\right\},
\end{equation}
and  
the lifted parameters
\[
\begin{array}{c}
\A'\left(\bm v\right) = \A\left(\bm u\right), \ \ \  \BB'\left(\bm v\right)= \BB\left(\bm u\right), \ \ \ 
\dd'\left(\bm v\right)=  \dd\left(\bm u\right), \ \ \  
\hh'\left(\bm v\right) =   \hh\left(\bm u\right),\\[4mm]
\bm{\widehat \Theta}'_j=\begin{pmatrix}\bm 0^\top, \bm{\widehat \Theta}_j^\top \end{pmatrix}^\top \in\RR^{(L+K+1)\times M}\qquad \forall j\in[J]. 
\end{array}
\]
Then, by replacing the set $\U$ with $\Ubar$ and employing the above lifted parameters 
in~\eqref{equ:ldr} and \eqref{equ:qdr-ref}, we obtain  the corresponding piecewise decision rule problems. These are   given by  

\begin{equation} \label{equ:pldr}
\tag{$\PLDR$}
\begin{array}{rcll} 
Z^{\PLDR}=&\inf & \cc^{\top} \x + \sup \limits_{\bm v\in \Ubar} \dd'({\vv})^{\top}  \bm Y \vv   \\
&\st  &  \A'\left(\vv\right)\x +  \BB'\left(\vv\right) \bm Y \vv  \geq \hh'\left(\vv\right) \ \ \ \forall \, \bm v\in \Ubar  \\
&    & \x \in \X, \, \bm Y\in \RR^{N\times (L+K+1)}
\end{array}
\end{equation}
and
\begin{equation} \label{equ:pqdr}
\tag{$\PQDR$}
\begin{array}{rcll} 
Z^{\PQDR}=&\inf  & \cc^{\top}\x +\sup\limits_{\uu'\in \Ubar} \sum\limits_{n=1}^N  \widehat d_n {\vv}^\top \bm{Q}_n \vv  \\
&\st  &  \vv^\top\bm{\widehat \Theta}'_j \x +  \sum\limits_{n=1}^N  \widehat b_{j n} {\vv}^\top\bm{Q}_n\vv \geq\left[\hh'\left(\vv\right) \right]_n  \ \ \ \forall \, \vv \in \Ubar \ \forall \, j \in [J] \\
&& \x \in \X,\;\bm Q_n \in \SYM^{L+K+1} \ \ \ \forall \, n \in [N],
\end{array}
\end{equation}
respectively.

We now establish that the piecewise decision rule problems can be equivalently reformulated as polynomial size copositive programs. 
The   reformulations leverage  our capability to incorporate complementary constraints in the uncertainty set $\U$.  
 We remark that the problems~\eqref{equ:pldr} and \eqref{equ:pqdr} share the same structure as their plain vanilla counterparts~\eqref{equ:ldr} and \eqref{equ:qdr-ref}. 
To establish that equivalent copositive programs can also be derived for these problems, we need to show  that the set $\Ubar$ can be brought into the standard form~\eqref{equ:uncertainty}.
First, we prove that the non-convex set $\Ubar$ is equivalent to a concise set involving~$\mathcal O(L)$ linear and complementary constraints. 
\begin{theorem}
	\label{prop:EDR} %{\color{blue} Suppose that Assumption \ref{ass:bounded} holds.}
	The lifted uncertainty set in \eqref{eq:uncertainty_expand} can be represented as 
	the set
	\begin{equation}
	\label{eq:uncertainty_expand_}
	\Ubar = \left\{ (\bm w, \uu) \in \RR^{L}\times\U : 
	\begin{array}{l} 
	 \bm 0\leq  \bm w \leq \overline{\bm w} \\
	w_\ell \geq \bm f_\ell^\top \bm u %,\   \overline{w}_\ell \geq w_\ell 
	\quad \forall \,  \ell \in[L] \\
	w_\ell(w_\ell - \bm f_\ell^\top \bm u)= 0 \quad \forall \, \ell \in[L]
	\end{array}
	\right\},
	\end{equation}
	where $\overline{\bm w} \in \RR^L$  is a vector whose components are upper bounds on the auxiliary parameters $w_1,\ldots,w_L$. 
	These upper bounds can be computed by solving $L$ convex conic optimization problems given by 
	\begin{equation*} \label{equ:upperw}
	\overline{w}_\ell := \max\limits_{ \uu \in \Uzero}  \;\;\bm f_\ell^\top \uu \qquad\forall \ell\in[L], 
	\end{equation*}
	where $\Uzero$ is defined as in (\ref{equ:Lset}).
\end{theorem}
\begin{proof}
	For any fixed $\bm u \in \mathcal U$ and $\ell \in [L]$, the complementary constraint 
	$w_\ell(w_\ell-\bm f_\ell^\top \bm u) = 0$ implies that either $w_\ell=0$ or 
	$w_\ell=\bm f_\ell^\top\bm u $. Thus, the constraints $w_\ell\geq 0$ and 
	$w_\ell \geq \bm f_\ell^\top\bm u$ yield $w_\ell = \max\{ 0, \bm f_\ell^\top \bm u\}$. 
	This completes the proof. 
\end{proof}

\begin{remark}
The inclusion of the upper bound $\overline{\bm w}$ on the lifted parameters $\bm w$ ensures
the boundedness of  the uncertainty set $\U'$, which is required by Assumption \ref{ass:bounded}. 
We also note that $\U'$ is closed since $w_\ell(w_\ell - \bm f_\ell^\top \bm u)= 0  \Leftrightarrow w_\ell =0$ or $w_\ell - \bm f_\ell^\top \bm u = 0$ for all $\ell \in [L]$. 
Therefore, $\U'$ is a compact set. 
%the uncertainty set $\U'$ is bounded by the boundeness assumption on $\uu$ in Assumption \ref{ass:bounded} and by the constraints $\bm{0} \leq {\bm w} \leq \overline{\bm w}$ defined in \eqref{eq:uncertainty_expand_}. We also note that $\U'$ is compact by the compactness assumption on $\uu$ in Assumption \ref{ass:bounded} and by the fact that for all $\ell \in [L]$ $w_\ell(w_\ell - \bm f_\ell^\top \bm u)= 0$ implies either $w_\ell =0$ or $w_\ell - \bm f_\ell^\top \bm u = 0$.
\end{remark}

\noindent Next, in view of the equivalent  set in \eqref{eq:uncertainty_expand_}, we define the lifted cone
\[
\K' := \left\{ (\bm w, \uu) \in \RR^{L}  \times \U :
\begin{array}{l} 
\bm 0\leq  \bm w \leq \overline{\bm w}u_{K+1} \\
w_\ell \geq \bm f_\ell^\top \bm u %,\   \overline{w}_\ell \geq w_\ell 
\quad \forall \,  \ell \in[L]
\end{array}
\right\}.
\]
Letting the matrices $\bm{\widehat C}_\ell$, $\ell\in[L]$, be defined as 
\begin{equation*}
\bm{\widehat C}_\ell=(\mathbf e_\ell^\top,\bm 0^\top)^\top(\mathbf e_\ell^\top,\bm 0^\top) - \frac{1}{2}(\mathbf e_\ell^\top,\bm 0^\top)^\top(\bm 0^\top,\bm f_\ell^\top)-\frac{1}{2}(\bm 0^\top,\bm f_\ell^\top)^\top(\mathbf e_\ell^\top,\bm 0^\top) \qquad \forall\ell\in[L], 
\end{equation*}
we can  capture the  complementarity constraints in $\Ubar$ via the quadratic equalities $\vv^\top\bm{\widehat C}_\ell\vv = 0$, $\ell \in [L]$. Thus, the lifted set coincides with the set
%
% to capture 
%the complementarity constraints in $\mathcal W$, we define 
\begin{equation*}
\label{eq:uncertainty_expand_homo}
\U' := \left\{
\vv := (\bm w, \uu) \in \K' :
 \begin{array}{l}
 u_{K+1} = 1, \ \vv^\top\bm{\widehat C}_\ell\vv = 0 \ \ \forall \, \ell \in [L]
\end{array}
\right\},
\end{equation*}
which indeed assumes the standard form in \eqref{equ:uncertainty}. In summary, we have 
established that equivalent copositive programs can  be derived for the proposed piecewise 
linear and piecewise quadratic decision rule problems. As described in Section \ref{sec:SDP_approx}, 
tractable semidefinite programming approximations can then be obtained by replacing the cone $\COP(\K')$ 
in the respective copositive programs with the  inner approximation $\IA(\K')$.

\section{ Semidefinite programming solution schemes}
\label{sec:SDP_approx}
Our equivalence results indicate that the decision rule problems are amenable to semidefinite 
programming solution schemes. Specifically, there exists a hierarchy of 
increasingly tight semidefinite-representable inner approximations that converge to 
$\COP(\K)$~\cite{Bomze.Klerk.2002,DeKlerk.Pasechnik.2002,Lasserre.2009,Parrilo.2000}.
Replacing the cone $\COP(\K)$  with these inner approximations gives rise to  conservative 
semidefinite programs that can be solved using standard off-the-shelf solvers. 
In this section, we develop new tractable  approximations and exact semidefinite reformulations for the 
 copositive programs derived  in Section \ref{sec:two-stage}. To this end, we  
primarily consider polyhedral- and second-order cone-representable uncertainty sets defined 
via closed and convex cones of the following generic form:
\begin{equation} \label{equ:spec_cone}
\K :=\left \{ 
\uu \in \RR^{K} \times \RR_+ : 
\bm{\widehat P} \uu \geq 0, \   \bm{\widehat R} \uu \in \SOC(K_r) 
\right \},
\end{equation}
with $\bm{\widehat P} \in \RR^{K_p \times (K+1)}$ and $\bm{\widehat R} \in \RR^{K_r \times (K+1)}$. 
As illustrated in the examples of Section \ref{sec:two-stage}, the above generic structure for the cone 
$\K$ can encompass many commonly used uncertainty sets in practice.

\subsection{Conservative approximations} 

We consider a  semidefinite-representable approximation  to the  cone $\COP(\K)$ given by
\begin{equation} \label{equ:innercone}
\IA(\K) := \left\{ \bm{V} \in \SYM^{K+1} : 
\begin{array}{l} 
\bm{W} \in \SYM^{K+1}, \ \bm{W} \succeq \bm 0, \ \bm{\Sigma} \in \SYM^{K_p} \\ 
\bm{\Psi} \in \SYM^{K+1}, \ \bm{\Phi} \in \RR^{K_p \times K_r}, \  \tau \in \RR  \\
\bm{V} = \bm{W} + \tau \bm{\widehat S} + \bm{\widehat P}^\top\bm{\Sigma}\bm{\widehat P} + \bm{\Psi}, \  \bm{\Sigma} \geq \bm 0, \ \tau \geq 0 \\  
\bm{\Psi} = \frac12(\bm{\widehat P}^\top \bm{\Phi} \bm{\widehat R} +  \bm{\widehat R}^\top \bm{\Phi}^\top\bm{\widehat P}), \ \text{Rows}(\bm{\Phi}) \in \SOC(K_r) 
\end{array} \right\},
\end{equation}
where the matrix $\bm{\widehat S}$ is defined as 
\begin{equation} \label{equ:Shat}
\bm{\widehat S} :=  \bm{\widehat R}^\top \mathbf{e}_{K_r} \mathbf{e}_{K_r}^\top\bm{\widehat R}  - \sum\limits_{\ell = 1}^{K_r-1}\bm{\widehat R}^\top \mathbf{e}_\ell \mathbf{e}_\ell^\top\bm{\widehat R}. 
\end{equation}
We  now  establish that $\IA(\K)$ is a subset of $\COP(\K)$.\footnote{Hence, we use the abbreviation ``$\IA$"  which stands for~\emph{``Inner Approximation."}} 
To this end, we make the following observation. 
\begin{lemma} \label{lem:linear_soc}
We have $\uu^\top\bm{\widehat S}\uu \geq 0$ for all $\uu \in \K$.
\end{lemma}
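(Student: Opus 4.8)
The plan is to show that for any $\uu \in \K$, the quantity $\uu^\top \bm{\widehat S}\uu$ decomposes into a difference that is controlled by the second-order cone membership $\bm{\widehat R}\uu \in \SOC(K_r)$. Write $\bm w := \bm{\widehat R}\uu \in \RR^{K_r}$. Then by the definition of $\bm{\widehat S}$ in \eqref{equ:Shat},
\[
\uu^\top \bm{\widehat S}\uu = \uu^\top \bm{\widehat R}^\top \mathbf{e}_{K_r}\mathbf{e}_{K_r}^\top \bm{\widehat R}\uu - \sum_{\ell=1}^{K_r-1} \uu^\top \bm{\widehat R}^\top \mathbf{e}_\ell \mathbf{e}_\ell^\top \bm{\widehat R}\uu = w_{K_r}^2 - \sum_{\ell=1}^{K_r-1} w_\ell^2 = w_{K_r}^2 - \|(w_1,\dots,w_{K_r-1})^\top\|^2.
\]
Since $\uu \in \K$ implies $\bm{\widehat R}\uu = \bm w \in \SOC(K_r)$, by definition of the standard second-order cone we have $\|(w_1,\dots,w_{K_r-1})^\top\| \le w_{K_r}$, and in particular $w_{K_r} \ge 0$. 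Squaring both sides of this inequality preserves it because both sides are nonnegative, giving $\|(w_1,\dots,w_{K_r-1})^\top\|^2 \le w_{K_r}^2$, hence $\uu^\top \bm{\widehat S}\uu \ge 0$.

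There is no real obstacle here; the only point requiring a moment's care is that squaring the norm inequality is valid only because $w_{K_r} \ge 0$, which is itself part of the cone membership. One should also note for completeness that $\bm{\widehat S}$ is symmetric (being a sum of symmetric rank-one terms), so the expression $\uu^\top \bm{\widehat S}\uu$ is well-defined in the usual quadratic-form sense. This lemma is exactly the ingredient needed in the subsequent argument to certify that the $\tau \bm{\widehat S}$ term in the definition \eqref{equ:innercone} of $\IA(\K)$ contributes nonnegatively to the quadratic form $\uu^\top \bm{V}\uu$ over $\uu \in \K$, which together with the semidefiniteness of $\bm W$, the nonnegativity of $\bm{\Sigma}$ combined with $\bm{\widehat P}\uu \ge 0$, and the $\SOC$-membership of the rows of $\bm{\Phi}$, will yield $\bm{V} \in \COP(\K)$.
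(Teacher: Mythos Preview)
Your proof is correct and follows essentially the same approach as the paper: write the quadratic form as $(\mathbf{e}_{K_r}^\top\bm{\widehat R}\uu)^2 - \sum_{\ell=1}^{K_r-1}(\mathbf{e}_\ell^\top\bm{\widehat R}\uu)^2$, use the second-order cone membership $\bm{\widehat R}\uu \in \SOC(K_r)$ to obtain the norm inequality, and square both sides. Your added remarks about the validity of squaring and the downstream use of the lemma are accurate but not part of the paper's own proof.
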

\begin{proof}
See the Appendix. 
\end{proof}
\noindent Using Lemma \ref{lem:linear_soc}, we are now ready to prove the containment result. 
\begin{proposition} \label{prop:ia-cop}
We have 
$\IA(\widehat \U) \subseteq \COP(\widehat \U)$.
\end{proposition}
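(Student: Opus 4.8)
The plan is to show that any matrix $\bm{V} \in \IA(\K)$ satisfies $\uu^\top \bm{V} \uu \geq 0$ for all $\uu \in \K$, which is precisely the statement that $\bm{V} \in \COP(\K)$. I would take an arbitrary $\uu \in \K$ and an arbitrary $\bm{V} \in \IA(\K)$, fix the certificate data $\bm{W}, \bm{\Sigma}, \bm{\Psi}, \bm{\Phi}, \tau$ witnessing membership, and evaluate the quadratic form $\uu^\top\bm{V}\uu$ by substituting the decomposition $\bm{V} = \bm{W} + \tau\bm{\widehat S} + \bm{\widehat P}^\top\bm\Sigma\bm{\widehat P} + \bm\Psi$. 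The goal is to argue each of the four resulting terms is nonnegative on $\K$.

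The four terms are handled as follows. First, $\uu^\top\bm{W}\uu \geq 0$ holds for \emph{all} $\uu$ (not just $\uu \in \K$) because $\bm{W} \succeq \bm 0$. Second, $\tau\,\uu^\top\bm{\widehat S}\uu \geq 0$ follows from $\tau \geq 0$ together with Lemma~\ref{lem:linear_soc}, which gives $\uu^\top\bm{\widehat S}\uu \geq 0$ on $\K$. Third, $\uu^\top\bm{\widehat P}^\top\bm\Sigma\bm{\widehat P}\uu = (\bm{\widehat P}\uu)^\top\bm\Sigma(\bm{\widehat P}\uu) \geq 0$ because $\bm{\widehat P}\uu \geq \bm 0$ for $\uu \in \K$ and $\bm\Sigma \geq \bm 0$ is entrywise nonnegative, so this is a sum of nonnegative products. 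Fourth, and this is the crux, I need $\uu^\top\bm\Psi\uu \geq 0$ on $\K$; writing $\uu^\top\bm\Psi\uu = (\bm{\widehat P}\uu)^\top\bm\Phi(\bm{\widehat R}\uu)$, I want to exploit that $\bm{\widehat P}\uu \geq \bm 0$, that $\bm{\widehat R}\uu \in \SOC(K_r)$, and that each row of $\bm\Phi$ lies in $\SOC(K_r)$. The key sub-fact is that for any two vectors $\bm p \in \SOC(K_r)$ and $\bm q \in \SOC(K_r)$ one has $\bm p^\top \bm q \geq 0$ — this is just self-duality of the second-order cone. Applying this row by row: for each $i$, the $i$-th row of $\bm\Phi$ is in $\SOC(K_r)$ and $\bm{\widehat R}\uu \in \SOC(K_r)$, so $(\text{row}_i\,\bm\Phi)^\top(\bm{\widehat R}\uu) \geq 0$; then $(\bm{\widehat P}\uu)^\top\bm\Phi(\bm{\widehat R}\uu) = \sum_i [\bm{\widehat P}\uu]_i \cdot (\text{row}_i\,\bm\Phi)^\top(\bm{\widehat R}\uu)$ is a nonnegative combination (coefficients $[\bm{\widehat P}\uu]_i \geq 0$) of nonnegative terms, hence nonnegative.

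Summing the four nonnegative contributions yields $\uu^\top\bm{V}\uu \geq 0$ for all $\uu \in \K$, establishing $\bm{V} \in \COP(\K)$ and therefore $\IA(\K) \subseteq \COP(\K)$. I expect the main obstacle to be purely bookkeeping: getting the $\bm\Psi$ term right, i.e.\ correctly peeling off the $\frac12(\bm{\widehat P}^\top\bm\Phi\bm{\widehat R} + \bm{\widehat R}^\top\bm\Phi^\top\bm{\widehat P})$ symmetrization into the scalar $(\bm{\widehat P}\uu)^\top\bm\Phi(\bm{\widehat R}\uu)$ (the two symmetric halves give equal scalars since $\uu^\top\bm{\widehat R}^\top\bm\Phi^\top\bm{\widehat P}\uu = (\uu^\top\bm{\widehat P}^\top\bm\Phi\bm{\widehat R}\uu)^\top$ is a scalar transpose of itself), and then invoking $\SOC$ self-duality in the row-wise form. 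Everything else is immediate from the cited lemma and the sign constraints built into the definition of $\IA(\K)$. One should note that the statement in the proposition is phrased with $\widehat\U$ in place of $\K$; I would simply apply the argument to the cone $\K$ underlying $\widehat\U$ of the generic form~\eqref{equ:spec_cone}, as the $\IA$ construction is defined in terms of that cone.
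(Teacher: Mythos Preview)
Your proposal is correct and follows essentially the same approach as the paper: fix $\bm V\in\IA(\K)$ and $\uu\in\K$, expand the quadratic form into the four summands, and verify nonnegativity of each using $\bm W\succeq\bm 0$, Lemma~\ref{lem:linear_soc} for the $\tau\bm{\widehat S}$ term, entrywise nonnegativity of $\bm\Sigma$ and $\bm{\widehat P}\uu$, and self-duality of the second-order cone applied row-wise for the $\bm\Psi$ term. Your handling of the symmetrization of $\bm\Psi$ and the $\widehat\U$ versus $\K$ notational point is exactly right.
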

\begin{proof}
See the Appendix. 
\end{proof}

\noindent Replacing the cone $\COP(\K)$ in (\ref{equ:cop_ldr}) and (\ref{equ:cop_qdr}) with the inner approximation $\IA(\K)$ gives rise to
 conservative semidefinite programs. We denote their optimal values as $Z^{\LDR}_{\textup{IA}}$ and $Z^{\QDR}_{\textup{IA}}$, respectively. 
The following proposition summarizes our current findings. 
\begin{proposition}
We have $Z^{\LDR}\leq Z^{\LDR}_{\textup{IA}}$ and $Z^{\QDR}\leq Z^{\QDR}_{\textup{IA}}$. 
\end{proposition}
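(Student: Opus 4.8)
The statement to prove is straightforward: replacing $\COP(\K)$ with the smaller cone $\IA(\K)$ in the copositive reformulations yields conservative (larger) optimal values. The plan is to argue at the level of feasible sets and exploit Proposition~\ref{prop:ia-cop}.

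First I would observe that the copositive programs \eqref{equ:cop_ldr} and \eqref{equ:cop_qdr} are both minimization problems in which the cone $\COP(\K)$ appears only on the left-hand side of the defining matrix inclusions (the membership constraints). Since $\IA(\K) \subseteq \COP(\K)$ by Proposition~\ref{prop:ia-cop}, any tuple of decision variables feasible for the $\IA$-restricted program---i.e., satisfying the stronger requirement that the relevant matrices lie in $\IA(\K)$---is automatically feasible for the original copositive program, where only membership in the larger cone $\COP(\K)$ is demanded. Hence the feasible region of the $\IA$-approximation is a subset of the feasible region of the exact copositive reformulation.

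Next, since the two problems share the same objective function $\cc^\top \x + \lambda$ and both are minimization problems, minimizing over a smaller feasible set can only increase (or leave unchanged) the optimal value. Therefore $Z^{\LDR}_{\textup{IA}} \geq Z^{\LDR}$ and $Z^{\QDR}_{\textup{IA}} \geq Z^{\QDR}$. I would spell this out once for the linear decision rule case and note that the quadratic decision rule case is identical, mutatis mutandis, since the structural role of $\COP(\K)$ in \eqref{equ:cop_qdr} is the same.

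There is essentially no obstacle here---the only substantive ingredient is Proposition~\ref{prop:ia-cop}, which is already established, and the rest is the elementary monotonicity principle that shrinking the feasible set of a minimization problem cannot decrease its value. The one point worth stating carefully is that $\IA(\K)$ is indeed a \emph{restriction} and not merely a different parametrization: a tuple $(\x, \lambda, \bm Y, \bm \pi, \bm \alpha, \{\bm \beta_j\})$ together with the auxiliary matrices $(\bm W, \bm \Sigma, \bm \Psi, \bm \Phi, \tau)$ certifying $\IA$-membership projects onto a tuple that already satisfies the $\COP(\K)$ constraints, so feasibility is preserved under the projection that forgets the auxiliary certificates.

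\begin{proof}
Both \eqref{equ:cop_ldr} and \eqref{equ:cop_qdr} are minimization problems whose only occurrences of the cone $\COP(\K)$ are in the matrix-inclusion constraints on the left-hand side. The semidefinite approximations obtained by replacing $\COP(\K)$ with $\IA(\K)$ impose, in place of each inclusion ``$\bm{M} \in \COP(\K)$,'' the stronger requirement ``$\bm{M} \in \IA(\K)$'' (equivalently, the existence of auxiliary matrices $\bm{W}, \bm{\Sigma}, \bm{\Psi}, \bm{\Phi}$ and a scalar $\tau$ satisfying the conditions in~\eqref{equ:innercone}). By Proposition~\ref{prop:ia-cop}, $\IA(\K) \subseteq \COP(\K)$; hence every solution feasible for the $\IA$-approximation remains feasible for the corresponding copositive program after discarding the auxiliary certificates. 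Thus the feasible region of the $\IA$-approximation, projected onto the original decision variables, is contained in the feasible region of the copositive program. Since the two share the same objective $\cc^\top \x + \lambda$ and are both minimization problems, minimizing over the smaller set cannot yield a smaller value, so $Z^{\LDR} \leq Z^{\LDR}_{\textup{IA}}$. The identical argument applied to \eqref{equ:cop_qdr} and its $\IA$-approximation gives $Z^{\QDR} \leq Z^{\QDR}_{\textup{IA}}$.
\end{proof}
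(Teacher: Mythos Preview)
Your proposal is correct and matches the paper's approach. The paper actually states this proposition without an explicit proof, treating it as an immediate summary of the preceding observation that $\IA(\K)\subseteq\COP(\K)$ (Proposition~\ref{prop:ia-cop}) together with the definition of $Z^{\LDR}_{\textup{IA}}$ and $Z^{\QDR}_{\textup{IA}}$; your argument spells out precisely this monotonicity reasoning.
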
 

An alternative conservative approximation  scheme is proposed by Ben-Tal et al.~%can be derived
in view of the~\emph{approximate $S$-lemma} \cite[Theorem B.3.1]{Ben-Tal.Ghaoui.Nemirovski.2009}. 
In this case, the corresponding inner approximation   for the cone $\COP(\K)$ is given by 
\begin{equation} \label{equ:ascone}
{\AS}(\K) := \left\{  \bm{V} \in \SYM^{K+1} : 
\begin{array}{l}
\tau \geq 0, \ \bm \theta \in \RR_+^{K_p}, \ \bm W \in \SYM^{K+1}, \ \bm W \succeq \bm  0  \\ 
\bm{V} =  \bm W + \tau \bm{\widehat S}  +  \dfrac12\left( \bm{\widehat P}^\top \bm \theta \mathbf{e}_{K+1}^\top + \mathbf{e}_{K+1}\bm \theta^\top \bm{\widehat P} \right)
\end{array}
\right\},
\end{equation}
where $\bm{\widehat S}$ is defined as in (\ref{equ:Shat}).
Replacing the cone $\COP(\K)$ in (\ref{equ:cop_ldr}) and (\ref{equ:cop_qdr}) with $\AS(\K)$ yields 
conservative semidefinite programs whose optimal values are denoted as $Z^{\LDR}_{\textup{AS}}$ and $Z^{\QDR}_{\textup{AS}}$, respectively. 
We now show that $\AS(\K)$ is inferior to $\IA(\K)$ for approximating $\COP(\K)$. 

\begin{proposition} \label{lem:ia-as}
We have  ${\AS}(\K) \subseteq \IA(\K)$.
\end{proposition}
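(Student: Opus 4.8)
The plan is to show $\AS(\K)\subseteq\IA(\K)$ by taking an arbitrary matrix $\bm V\in\AS(\K)$ and exhibiting, from its defining certificate, a certificate witnessing $\bm V\in\IA(\K)$. Comparing the two definitions~\eqref{equ:innercone} and~\eqref{equ:ascone}, both involve the same decomposition scaffold $\bm V=\bm W+\tau\bm{\widehat S}+(\text{polyhedral term})+\bm\Psi$ with $\bm W\succeq\bm 0$ and $\tau\geq 0$; the difference is that $\AS(\K)$ uses only a rank-structured polyhedral contribution $\tfrac12(\bm{\widehat P}^\top\bm\theta\mathbf{e}_{K+1}^\top+\mathbf{e}_{K+1}\bm\theta^\top\bm{\widehat P})$ with $\bm\theta\in\RR_+^{K_p}$ and no second-order-cone term, whereas $\IA(\K)$ additionally permits a general $\bm{\widehat P}^\top\bm\Sigma\bm{\widehat P}$ with $\bm\Sigma\geq\bm 0$ and a cross term $\bm\Psi=\tfrac12(\bm{\widehat P}^\top\bm\Phi\bm{\widehat R}+\bm{\widehat R}^\top\bm\Phi^\top\bm{\widehat P})$ with $\mathrm{Rows}(\bm\Phi)\in\SOC(K_r)$.

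The key step is to account for the term $\tfrac12(\bm{\widehat P}^\top\bm\theta\mathbf{e}_{K+1}^\top+\mathbf{e}_{K+1}\bm\theta^\top\bm{\widehat P})$ using the ingredients available in $\IA(\K)$. The natural idea is to route it through the cross term $\bm\Psi$: since $\mathbf{e}_{K+1}^\top\uu=1$ on the relevant slice, one has (with $u_{K+1}$ being the last coordinate of $\bm{\widehat R}\uu$'s homogenizing variable) that $\mathbf{e}_{K+1}$ can be written as $\bm{\widehat R}^\top\mathbf{e}_{K_r}$ provided the last row of $\bm{\widehat R}$ equals $\mathbf{e}_{K+1}^\top$ — which is exactly the standard normalization that makes $\bm{\widehat R}\uu\in\SOC(K_r)$ encode $\|\cdots\|\leq u_{K+1}$. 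Under this normalization, setting $\bm\Phi:=\bm\theta\,\mathbf{e}_{K_r}^\top$ gives $\tfrac12(\bm{\widehat P}^\top\bm\Phi\bm{\widehat R}+\bm{\widehat R}^\top\bm\Phi^\top\bm{\widehat P})=\tfrac12(\bm{\widehat P}^\top\bm\theta\mathbf{e}_{K_r}^\top\bm{\widehat R}+\cdots)=\tfrac12(\bm{\widehat P}^\top\bm\theta\mathbf{e}_{K+1}^\top+\mathbf{e}_{K+1}\bm\theta^\top\bm{\widehat P})$, the $\AS$ polyhedral term exactly. It remains only to check $\mathrm{Rows}(\bm\Phi)\in\SOC(K_r)$: each row of $\bm\Phi=\bm\theta\,\mathbf{e}_{K_r}^\top$ is $\theta_p\,\mathbf{e}_{K_r}$, and since $\theta_p\geq 0$ this vector $(0,\ldots,0,\theta_p)$ indeed lies in $\SOC(K_r)$. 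Then choosing $\bm\Sigma:=\bm 0$ (which is nonnegative and PSD-compatible with the definition), keeping the same $\bm W$ and $\tau$, and setting $\bm\Psi$ as above yields a valid $\IA(\K)$ certificate for $\bm V$.

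The main obstacle I anticipate is handling the normalization of $\bm{\widehat R}$: the clean substitution above relies on $\mathbf{e}_{K+1}=\bm{\widehat R}^\top\mathbf{e}_{K_r}$, i.e.\ on the convention that the epigraph variable of the second-order cone is exactly the homogenizing coordinate $u_{K+1}$. This is consistent with Examples in Section~\ref{sec:two-stage} (e.g.\ the polytope-and-ball example has $\|\bm R\bm\xi-\bm s\tau\|\leq t\tau$, so the last row of $\bm{\widehat R}$ is $t\,\mathbf{e}_{K+1}^\top$ rather than $\mathbf{e}_{K+1}^\top$); one should therefore phrase the argument using the last row of $\bm{\widehat R}$ — call it $\bm r_{K_r}^\top$ — and instead exploit that $\bm r_{K_r}^\top\uu\geq 0$ on $\K$ together with the fact that $\bm{\widehat S}$ in~\eqref{equ:Shat} already "spends" the cone structure, OR simply invoke that the paper's standing assumption places $\mathbf e_{K+1}$ in the last row of $\bm{\widehat R}$ (as implicitly needed for $\bm{\widehat S}$ to behave correctly). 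If the convention is as in the cleanest case, the proof is the short substitution above; if one wants full generality, the cross term with $\bm\Phi=\bm\theta\bm\mu^\top$ for an appropriate $\bm\mu\in\SOC(K_r)$ chosen so that $\bm{\widehat R}^\top\bm\mu$ reproduces $\mathbf e_{K+1}$ (up to the slack absorbed by $\bm W\succeq\bm0$ and $\tau\bm{\widehat S}$) does the job, and verifying $\bm\mu\in\SOC(K_r)$ is the only nontrivial check. I would present the clean version, noting the normalization explicitly, and defer the fully general bookkeeping to a remark.
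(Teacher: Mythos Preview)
Your approach is correct under the normalization you flag, but it takes a different and more circuitous route than the paper. You absorb the $\AS$ term $\tfrac12(\bm{\widehat P}^\top\bm\theta\,\mathbf{e}_{K+1}^\top+\mathbf{e}_{K+1}\bm\theta^\top\bm{\widehat P})$ into the \emph{cross term} $\bm\Psi$ by choosing $\bm\Phi=\bm\theta\,\mathbf{e}_{K_r}^\top$ and $\bm\Sigma=\bm 0$, which forces you to assume that the last row of $\bm{\widehat R}$ equals $\mathbf{e}_{K+1}^\top$. The paper instead absorbs the very same term into the \emph{polyhedral block} $\bm{\widehat P}^\top\bm\Sigma\bm{\widehat P}$, setting $\bm\Psi=\bm 0$: since $u_{K+1}\ge 0$ is already one of the linear inequalities defining $\K$ (the factor $\RR_+$ in $\RR^K\times\RR_+$), one may without loss of generality take $\mathbf{e}_{K+1}^\top$ to be a row of $\bm{\widehat P}$, say row $j$, so that $\bm{\widehat P}^\top\mathbf e_j=\mathbf e_{K+1}$. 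Then $\bm\Sigma:=\tfrac12(\bm\theta\,\mathbf e_j^\top+\mathbf e_j\bm\theta^\top)\in\SYM^{K_p}$ is componentwise nonnegative (because $\bm\theta\ge\bm 0$) and satisfies $\bm{\widehat P}^\top\bm\Sigma\bm{\widehat P}=\tfrac12(\bm{\widehat P}^\top\bm\theta\,\mathbf e_{K+1}^\top+\mathbf e_{K+1}\bm\theta^\top\bm{\widehat P})$ exactly. This sidesteps the issue you spend most of your write-up worrying about: no hypothesis on $\bm{\widehat R}$ is needed, the argument works even when there is no second-order cone component at all, and the ``normalization'' you need is simply the tautological inclusion of the constraint $u_{K+1}\ge 0$ among the rows of $\bm{\widehat P}$. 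Your route buys nothing extra and leaves an unresolved case; the paper's one-line choice of $\bm\Sigma$ is both simpler and more general.
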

\begin{proof}
The inclusion follows by simply  
setting $\bm{\Sigma} =  \frac12 (\bm \theta \mathbf{e}_{K+1}^\top + \mathbf{e}_{K+1}\bm \theta^\top)$
and $\bm\Psi = \bm 0$ in $\IA(\K)$. 
\end{proof}

Lastly, another conservative approximation scheme naturally arises in polynomial decision 
rules~\cite{Bertsimas.Iancu.Parrilo.2010}. Here, one first imposes the restriction that the recourse 
function $\bm y(\cdot)$ in \eqref{equ:tsro} is a polynomial  of fixed degree $d$.  Since optimizing 
for the best  polynomial decision rule is generically NP-hard, one resorts to another layer of approximation in  
semidefinite programming. To this end, consider a degree~$d$ polynomial decision rule. For  problems with non-fixed recourse we find that each 
semi-infinite constraint in \eqref{equ:tsro} reduces to the problem of checking  the non-negativity of a polynomial 
of degree $\hat d=d+1$ over  the set $\U$, while for problems with fixed recourse it reduces to the problem of checking the non-negativity a 
polynomial of degree $\hat d=d$ over the set~$\U$. A sufficient condition  
would be if the polynomial admits a sum-of-squares (SOS) decomposition relative to $\U$, which is equivalent 
to checking the feasibility of a semidefinite-representable constraint system whose size grows exponentially 
in $d$. We refer the reader to \cite{Bertsimas.Iancu.Parrilo.2010} for a more detailed discussion about the 
SOS decomposition and its parametrization. When the corresponding polynomial in the semi-infinite constraint is of 
degree $\hat d=2$, then one can show the resulting constraint system coincides with that from the 
approximate S-lemma. To this end, let $Z^{\mathcal P_d}_{\textup{SOS}}$ be the optimal value of the 
approximation when polynomial decision rules of degree $d$ are employed. Then, we have $Z^{\mathcal P_1}_{\textup{SOS}}=Z^{\LDR}_{\textup{AS}}$ and $Z^{\mathcal P_2}_{\textup{SOS}}=Z^{\QDR}_{\textup{AS}}$.  
Increasing the degree of the polynomial decision rules helps improve approximation quality at the expense of significant 
computational burden and numerical instability, even if we merely raise the degree by $1$ (that is, when we employ 
quadratic decision rules for problems with non-fixed recourse or  cubic decision rules for problems with fixed recourse). 

The findings of this section culminate in the following theorem. 
\begin{theorem}
The following chains of inequalities hold:
\begin{equation*}
Z^{\LDR} \leq Z^{\LDR}_{\textup{IA}} \leq Z^{\LDR}_{\textup{AS}}=Z^{\mathcal P_1}_{\textup{SOS}}\quad\textup{ and }\quad Z^{\QDR}\leq  Z^{\QDR}_{\textup{IA}}\leq Z^{\QDR}_{\textup{AS}}=Z^{\mathcal P_2}_{\textup{SOS}}. 
\end{equation*}
\end{theorem}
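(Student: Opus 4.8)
The statement consolidates the containment and representability facts proved earlier, so the plan is to chain them together. I would argue the two displays in parallel: the reasoning for the $\QDR$ chain is verbatim that for the $\LDR$ chain after replacing the copositive program \eqref{equ:cop_ldr} by \eqref{equ:cop_qdr}, the degree-$1$ decision rule by the degree-$2$ rule, and $\mathcal P_1$ by $\mathcal P_2$.

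First I would dispatch the two inequalities. The semidefinite program with value $Z^{\LDR}_{\textup{IA}}$ is obtained from the exact copositive program \eqref{equ:cop_ldr} of Theorem~\ref{the:ldr-ref} by replacing every occurrence of $\COP(\K)$ with $\IA(\K)$; since $\IA(\K)\subseteq\COP(\K)$ by Proposition~\ref{prop:ia-cop}, this can only shrink the feasible set of a minimization, so $Z^{\LDR}\le Z^{\LDR}_{\textup{IA}}$ (exactly the proposition stated right after Proposition~\ref{prop:ia-cop}). Replacing $\COP(\K)$ instead by $\AS(\K)$ produces the program with value $Z^{\LDR}_{\textup{AS}}$, and since $\AS(\K)\subseteq\IA(\K)$ by Proposition~\ref{lem:ia-as} the feasible set shrinks further still, giving $Z^{\LDR}_{\textup{IA}}\le Z^{\LDR}_{\textup{AS}}$. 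Applied to \eqref{equ:cop_qdr}, the same two steps yield $Z^{\QDR}\le Z^{\QDR}_{\textup{IA}}\le Z^{\QDR}_{\textup{AS}}$.

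It then remains to prove the equality $Z^{\LDR}_{\textup{AS}}=Z^{\mathcal P_1}_{\textup{SOS}}$ (and its $\QDR$ twin). For a problem with non-fixed recourse, a degree-$1$ decision rule $\yy(\uu)=\bm Y\uu$ turns each semi-infinite constraint of \eqref{equ:tsro}, and likewise the epigraph constraint for the objective, into the requirement that a quadratic form in $\uu$ be nonnegative over $\U$ — a polynomial of degree $\hat d=2$. The degree-$1$ SOS scheme certifies each such requirement by writing the quadratic — homogenized through $\mathbf{e}_{K+1}^\top\uu=1$ and shifted by free multiples of the side quadratics $\uu^\top\bm{\widehat C}_i\uu$ — as the sum of a positive-semidefinite form, a nonnegative combination of products of the linear inequalities describing $\K$, and products of those linear inequalities with the coordinates of $\bm{\widehat R}\uu$. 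Matching this representation term by term against the membership description of $\AS(\K)$ in \eqref{equ:ascone}, one reads off that the positive-semidefinite summand is $\bm W$, the polyhedral contribution is $\tfrac12(\bm{\widehat P}^\top\bm\theta\mathbf{e}_{K+1}^\top+\mathbf{e}_{K+1}\bm\theta^\top\bm{\widehat P})$ with $\bm\theta\ge\bm 0$, and the second-order-cone contribution collapses — via Lemma~\ref{lem:linear_soc} and the definition \eqref{equ:Shat} of $\bm{\widehat S}$ — into the single term $\tau\,\bm{\widehat S}$ with $\tau\ge 0$. Hence the $\AS$-substituted program and the degree-$1$ SOS program are literally the same semidefinite program and their values agree. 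Repeating the computation with a degree-$2$ rule on a fixed-recourse instance, where the constraint and objective polynomials are again quadratic, yields $Z^{\QDR}_{\textup{AS}}=Z^{\mathcal P_2}_{\textup{SOS}}$, and both chains are complete.

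The first paragraph is pure bookkeeping; the real work is the term-by-term identification of the last step: one must check that the lowest-order SOS certificate for nonnegativity of a \emph{quadratic} over the cone-slice $\U$ reproduces, coefficient for coefficient, the approximate-$S$-lemma certificate encoded in $\AS(\K)$. The delicate points are (i) that the SOS multipliers attached to the second-order-cone block $\bm{\widehat R}\uu\in\SOC(K_r)$ of $\K$ compress exactly into the single scalar $\tau$ and the fixed matrix $\bm{\widehat S}$, and (ii) that the normalization $\mathbf{e}_{K+1}^\top\uu=1$ and the quadratic constraints $\uu^\top\bm{\widehat C}_i\uu=0$ enter both formulations through sign-unrestricted multipliers in the same way, including in the objective's epigraph reformulation. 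Once that matching is verified, the whole statement follows from the nesting $\AS(\K)\subseteq\IA(\K)\subseteq\COP(\K)$.
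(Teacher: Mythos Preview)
Your proposal is correct and matches the paper's approach: the theorem is stated there as a summary of the section's findings with no separate proof, relying exactly on the cone nesting $\AS(\K)\subseteq\IA(\K)\subseteq\COP(\K)$ from Propositions~\ref{prop:ia-cop} and~\ref{lem:ia-as} for the inequalities, and on the asserted coincidence of the degree-$2$ SOS certificate with the approximate $S$-lemma for the equality. Your write-up in fact supplies more detail on the $\AS$/SOS identification than the paper itself, which simply states that ``the resulting constraint system coincides with that from the approximate $S$-lemma'' and moves on.
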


\subsection{Exact reformulations} 

We identify two cases where the semidefinite-based approximations are equivalent to the respective copositive programs.  %(\ref{equ:ia_ldr}) and (\ref{equ:ia_qdr}) are equivalent 
Firstly, in view 
the \emph{exact} $S$-lemma, one can show that the inner approximation  $\IA(\K)$ coincides with $\COP(\K)$ whenever the cone 
$\K$ in (\ref{equ:spec_cone}) is described  by only a second-order cone 
constraint $\bm{\widehat R}\uu \in \SOC(K_p)$. % i.e., the polyhedral constraint 
%$\bm{\widehat P}\uu \geq 0$ is redundant. 
\begin{proposition}[$S$-Lemma] \label{lem:soc}
If $\K = \{ \uu \in \RR^{K+1} : \bm{\widehat R}\uu \in \SOC(K_p) \}$ then
\[
\COP(\K) = \IA(\K) = \AS(\K):= \left\{ \bm{V} \in \SYM^{K+1} : \bm{V} \succeq  \tau \bm{\widehat S} ,\ \tau \geq 0  \right\},
\]
where $\bm{\widehat S} \in \SYM^{K+1}$ is defined as in (\ref{equ:Shat}). 
\end{proposition}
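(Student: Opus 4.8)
The claim splits into an almost immediate part and the real content. When $\K$ carries only the second-order cone constraint, the matrix $\bm{\widehat P}$ (and the index $K_p$) is absent, so in the definitions \eqref{equ:innercone} and \eqref{equ:ascone} every term containing a factor $\bm{\widehat P}$ drops out: the summand $\bm{\widehat P}^\top\bm\Sigma\bm{\widehat P}$ and the whole matrix $\bm\Psi$ vanish in $\IA(\K)$, and the term $\frac12(\bm{\widehat P}^\top\bm\theta\mathbf{e}_{K+1}^\top + \mathbf{e}_{K+1}\bm\theta^\top\bm{\widehat P})$ vanishes in $\AS(\K)$. Both sets therefore reduce to $\{\bm V \in \SYM^{K+1} : \bm V = \bm W + \tau\bm{\widehat S}, \ \bm W \succeq \bm 0, \ \tau \geq 0\}$, and the substitution $\bm W := \bm V - \tau\bm{\widehat S}$ rewrites this as $\{\bm V : \bm V \succeq \tau\bm{\widehat S}, \ \tau \geq 0\}$. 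This proves $\IA(\K) = \AS(\K)$ together with the displayed right-hand side, as well as the inclusion $\AS(\K) \subseteq \COP(\K)$: for $\bm V = \bm W + \tau\bm{\widehat S}$ with $\bm W \succeq \bm 0$, $\tau \geq 0$ and any $\uu \in \K$, Lemma~\ref{lem:linear_soc} gives $\uu^\top\bm V\uu = \uu^\top\bm W\uu + \tau\,\uu^\top\bm{\widehat S}\uu \geq 0$. It remains to establish $\COP(\K) \subseteq \AS(\K)$, which is the substance of the proposition.

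For this, fix $\bm M \in \COP(\K)$. Write $\bm J := \mathbf{e}_{K_r}\mathbf{e}_{K_r}^\top - \sum_{\ell=1}^{K_r-1}\mathbf{e}_\ell\mathbf{e}_\ell^\top \in \SYM^{K_r}$, so that $\bm{\widehat S} = \bm{\widehat R}^\top\bm J\bm{\widehat R}$ and $\uu^\top\bm{\widehat S}\uu = (\bm{\widehat R}\uu)^\top\bm J(\bm{\widehat R}\uu)$ for all $\uu$. A vector $\bm v \in \RR^{K_r}$ satisfies $\bm v^\top\bm J\bm v \geq 0$ exactly when $\bm v \in \SOC(K_r) \cup (-\SOC(K_r))$, so the region $\{\uu : \uu^\top\bm{\widehat S}\uu \geq 0\}$ coincides with $\K \cup (-\K)$. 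Because the quadratic form $\uu \mapsto \uu^\top\bm M\uu$ is even and is nonnegative on $\K$, it is also nonnegative on $-\K$, hence on all of $\K \cup (-\K) = \{\uu : \uu^\top\bm{\widehat S}\uu \geq 0\}$. In other words, $\bm M$ is nonnegative over the region described by the \emph{single} homogeneous quadratic inequality $\uu^\top\bm{\widehat S}\uu \geq 0$, so the exact $S$-lemma \cite{Ben-Tal.Ghaoui.Nemirovski.2009} delivers a scalar $\tau \geq 0$ with $\bm M - \tau\bm{\widehat S} \succeq \bm 0$. Taking $\bm W := \bm M - \tau\bm{\widehat S} \succeq \bm 0$ exhibits $\bm M = \bm W + \tau\bm{\widehat S} \in \AS(\K)$, completing the proof.

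I expect the crux to be the second paragraph: the realization that copositivity of $\bm M$ over the slice $\K$ of the second-order cone automatically lifts, thanks to the evenness of the quadratic form, to copositivity over the full symmetric ``double cone'' $\K \cup (-\K)$, which is governed by one homogeneous quadratic; it is precisely this reduction to a single quadratic that lets the classical exact $S$-lemma --- with a single Lagrange multiplier $\tau$ --- apply without loss, whereas a naive treatment of $\K$ as the intersection of the quadratic region with the extra half-space $\mathbf{e}_{K_r}^\top\bm{\widehat R}\uu \geq 0$ would invoke a two-constraint $S$-procedure that can be lossy. One should also make explicit the mild nondegeneracy under which the exact $S$-lemma is tight, namely that $\uu^\top\bm{\widehat S}\uu$ takes a positive value for some $\uu$ (the natural full-dimensionality requirement on $\K$); the complementary case $\bm{\widehat S} \succeq \bm 0$ is handled trivially with $\tau = 0$.
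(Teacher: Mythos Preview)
Your proof is correct and is precisely what the paper has in mind: the paper does not supply a detailed argument for this proposition but simply labels it the $S$-Lemma and remarks, in the sentence preceding it, that the equality holds ``in view of the exact $S$-lemma.'' Your second paragraph fills in that reduction --- exploiting the evenness of $\uu\mapsto\uu^\top\bm M\uu$ to lift copositivity on $\K$ to nonnegativity on the full region $\K\cup(-\K)=\{\uu:\uu^\top\bm{\widehat S}\uu\ge 0\}$, and then invoking the classical single-constraint $S$-lemma --- which is exactly the intended route.

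One small slip in your last sentence: the degenerate case is $\bm{\widehat S}\preceq\bm 0$, not $\succeq$ (the Slater hypothesis of the $S$-lemma fails precisely when $\uu^\top\bm{\widehat S}\uu\le 0$ for all $\uu$). That case is also not quite dispatched by ``take $\tau=0$,'' since copositivity of $\bm M$ over $\K\cup(-\K)=\ker\bm{\widehat S}$ then only controls the restriction of $\bm M$ to that subspace, and one needs a further argument (e.g., sending $\tau\to\infty$ so that $-\tau\bm{\widehat S}$ dominates off the kernel) to conclude. This is moot in the paper's setting, however, since any interior point of $\K$ already furnishes $\uu^\top\bm{\widehat S}\uu>0$.
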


Another exactness result arises when linear constraints  are present in $\K$ and they satisfy the following condition:
\begin{assumption} \label{ass:caps}
If $\uu \in \RR^{K+1}$ satisfies $\bm{\widehat R}\uu \in \SOC(K_p)$ and 
$\bm{\widehat p}_\ell^\top \uu = 0$ for some $\ell \in [K_p]$, then $\bm u\in\K$. % 
\end{assumption}
\noindent The condition stipulates that the cone  $\{\bm u\in\RR^{K+1}:\bm{\widehat R}\uu \in \SOC(K_p)\}$ 
must not contain points in the hyperplane $\bm{\widehat p}_\ell^\top \uu = 0$ that do not not belong to $\K$. 
Applying the restriction  $u_{K+1}=1$, we find that the implied uncertainty set for the primitive vector 
$(u_1,\ldots,u_K)^\top$ is given by an intersection of a ball and a polytope whose facets  do not intersect within the ball.  

\begin{example}
Consider the set 
\[
\U : = \left\{  \uu \in \RR^2 \times \{1\} : u_1^2 + u_2^2 \leq 1, \ u_1 \geq -\frac12, \ u_1 \leq \frac12 \right \}.
\]
The two lines $u_1 = -\frac12$ and $u_1 = \frac12$   do not intersect as they are parallel. %within
Thus, Assumption \ref{ass:caps} holds for this uncertainty set. 
\end{example}

\noindent We state the second exactness result in the following proposition. 
\begin{proposition} [Theorem 5 in \cite{Burer.2015}] \label{lem:burer-yang}
If Assumption~\ref{ass:caps} holds then $\COP(\K) = \IA(\K)$. 
\end{proposition}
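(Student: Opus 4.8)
Since Proposition~\ref{prop:ia-cop} already delivers the inclusion $\IA(\K) \subseteq \COP(\K)$, it suffices to prove the reverse containment $\COP(\K) \subseteq \IA(\K)$, i.e.\ that every $\bm V$ that is copositive with respect to $\K$ admits the structured decomposition in \eqref{equ:innercone}. The plan is to recognize this as a conic-duality restatement of the exact semidefinite characterization of \cite{Burer.2015}: fixing $u_{K+1}=1$ turns the constraint $\bm{\widehat R}\uu \in \SOC(K_r)$ into a transformed ball (ellipsoid) constraint while the rows of $\bm{\widehat P}$ become linear cuts, so copositivity of $\bm V$ over $\K$ is, modulo the homogeneous $u_{K+1}=0$ slice, nonnegativity of the quadratic form $\uu^\top\bm V\uu$ over a region of the form ``ball $\cap$ polytope.'' Assumption~\ref{ass:caps} is precisely the hypothesis that no two facet hyperplanes $\bm{\widehat p}_\ell^\top\uu=0$ meet inside that region, which is exactly the non-intersecting-linear-constraints condition under which the natural Shor/semidefinite relaxation of such a constrained trust-region problem is tight.

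Granting that tightness, dualizing the exact relaxation produces a certificate for nonnegativity of $\uu^\top\bm V\uu$ on $\K$ built from exactly the ingredients appearing in $\IA(\K)$: a positive semidefinite remainder $\bm W$; a nonnegative multiple $\tau\bm{\widehat S}$ of the valid quadratic inequality $\uu^\top\bm{\widehat S}\uu \ge 0$ on the conic ``ball'' part (Lemma~\ref{lem:linear_soc}, and the exact $S$-lemma of Proposition~\ref{lem:soc} in the constraint-free case); an aggregation $\bm{\widehat P}^\top\bm\Sigma\bm{\widehat P}$ with $\bm\Sigma \ge \bm 0$ encoding the products $(\bm{\widehat p}_\ell^\top\uu)(\bm{\widehat p}_{\ell'}^\top\uu)\ge 0$; and a cross term $\bm\Psi = \tfrac12(\bm{\widehat P}^\top\bm\Phi\bm{\widehat R}+\bm{\widehat R}^\top\bm\Phi^\top\bm{\widehat P})$ with $\text{Rows}(\bm\Phi)\in\SOC(K_r)$ coupling the linear forms to the second-order-cone form. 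Summing these reproduces exactly the membership condition $\bm V = \bm W + \tau\bm{\widehat S} + \bm{\widehat P}^\top\bm\Sigma\bm{\widehat P} + \bm\Psi \in \IA(\K)$, so the decomposition is available for every $\bm V \in \COP(\K)$, giving the equality together with Proposition~\ref{prop:ia-cop}.

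The argument thus reduces to two verifications. First, a bookkeeping step: show that the cone $\K$ in \eqref{equ:spec_cone} under Assumption~\ref{ass:caps} — once homogenized through $u_{K+1}$ — satisfies the precise hypotheses of \cite[Theorem~5]{Burer.2015}, and that its conclusion, when written in matrix form, is the certificate described above (including care at the $u_{K+1}=0$ face, handled by a limiting/closedness argument in the spirit of Lemma~\ref{lem:zero}). Second, and this is the real obstacle, one must confirm that the non-intersecting structure genuinely permits a degree-two certificate: that there is no need for products of three or more linear forms and no need for facet-indexed second-order-cone multipliers beyond the single $\bm\Phi$. This is exactly the geometric heart of the Burer–Yang analysis — the facets can be ``desingularized'' one at a time because on each hyperplane $\bm{\widehat p}_\ell^\top\uu=0$ the point already lies in $\K$ — and it is where Assumption~\ref{ass:caps} is indispensable; without it the certificate would require richer aggregation and the equality would fail. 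Once this structural fact is in hand, the equality $\COP(\K)=\IA(\K)$ follows immediately.
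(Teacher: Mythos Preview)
The paper does not supply its own proof of this proposition: it is stated as a direct citation of Theorem~5 in \cite{Burer.2015}, with no argument given beyond the attribution. So there is no in-paper proof to compare against.

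Your proposal is a reasonable high-level roadmap of how the Burer--Yang result is obtained, and the identification of the ingredients is correct: the reduction to a ball-with-non-intersecting-cuts trust-region problem, tightness of the Shor relaxation under Assumption~\ref{ass:caps}, and the dual certificate matching the decomposition in~\eqref{equ:innercone}. That said, it is a sketch rather than a proof. You yourself flag the ``real obstacle'' --- that the non-intersecting-facets hypothesis suffices for a degree-two certificate, with no need for higher-order products of linear forms --- and then defer it to ``the geometric heart of the Burer--Yang analysis.'' That is the entire content of the theorem; everything else is bookkeeping. So what you have written is essentially an annotated restatement of why the cited result applies, which is appropriate given that the paper treats the proposition as an imported fact, but it should not be mistaken for an independent proof.
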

\noindent We remark that this positive result holds only for the proposed inner approximation $\IA(\K)$, 
and not for the cone $\AS(\mathcal K)$ which is obtained from applying the approximate S-lemma. 
Thus, in general we may still  have $ \AS(\K)\subseteq \COP(\K)$.  %becomes exact in this case, the approximation $\AS( from the approximate S-Lemma 

We conclude the section with the following theorem regarding the exactness of the semidefinite programs. 
\begin{theorem}
If the cone $\K$ is given by $\{ \uu \in \RR^{K+1} : \bm{\widehat R}\uu \in \SOC(K_p) \}$ 
or if it satisfies Assumption~\ref{ass:caps} then  $Z^{\LDR}_{\textup{IA}}  = Z^{\LDR}$ and $Z^{\QDR}_{\textup{IA}}  =Z^{\QDR}$. 
\end{theorem}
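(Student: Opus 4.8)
The plan is to chain together the exactness results already established in this section. The theorem asserts that, under either of the two hypotheses on $\K$, the semidefinite approximations obtained by replacing $\COP(\K)$ with $\IA(\K)$ in the copositive programs \eqref{equ:cop_ldr} and \eqref{equ:cop_qdr} are \emph{exact}, i.e.\ $Z^{\LDR}_{\textup{IA}} = Z^{\LDR}$ and $Z^{\QDR}_{\textup{IA}} = Z^{\QDR}$. The core observation is that both the linear decision rule program \eqref{equ:cop_ldr} and the quadratic decision rule program \eqref{equ:cop_qdr} are linear optimization problems whose only nonpolyhedral ingredient is a finite collection of copositive-cone membership constraints of the form $\bm{M} \in \COP(\K)$ (the objective-related constraint plus the $J$ constraints indexed by $j \in [J]$). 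Replacing $\COP(\K)$ by $\IA(\K)$ replaces each such constraint by a tighter one, which is exactly why $Z^{\LDR} \le Z^{\LDR}_{\textup{IA}}$ and $Z^{\QDR} \le Z^{\QDR}_{\textup{IA}}$ (the already-recorded Proposition in this section). So it suffices to prove the reverse inequalities, and for that it suffices to prove that $\IA(\K) = \COP(\K)$ under each hypothesis.

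First I would handle the case $\K = \{\uu \in \RR^{K+1} : \bm{\widehat R}\uu \in \SOC(K_p)\}$. Here Proposition~\ref{lem:soc} ($S$-Lemma) directly gives $\COP(\K) = \IA(\K)$ (indeed $= \AS(\K)$), so every feasible point of \eqref{equ:cop_ldr}, resp.\ \eqref{equ:cop_qdr}, with $\COP(\K)$ replaced by $\IA(\K)$ is already feasible for the original copositive program with identical objective value; hence $Z^{\LDR}_{\textup{IA}} \le Z^{\LDR}$ and $Z^{\QDR}_{\textup{IA}} \le Z^{\QDR}$. Combined with the trivial direction this yields equality. The second case, when $\K$ satisfies Assumption~\ref{ass:caps}, is handled identically: Proposition~\ref{lem:burer-yang} (Theorem~5 in \cite{Burer.2015}) gives $\COP(\K) = \IA(\K)$, and the same substitution argument applies verbatim.

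The only mild subtlety to check is that the substitution of $\IA(\K)$ for $\COP(\K)$ in \eqref{equ:cop_ldr} and \eqref{equ:cop_qdr} is performed uniformly in \emph{every} copositive constraint appearing there, including the objective-defining constraint involving $\lambda$; once $\IA(\K) = \COP(\K)$ as sets of matrices, each individual replacement is an identity, so the feasible regions of the SDP and the copositive program coincide exactly, not merely up to inclusion, and the optimal values are equal. There is no real obstacle here — the theorem is a straightforward corollary of Propositions~\ref{lem:soc} and~\ref{lem:burer-yang} together with the monotonicity of optimal values under tightening of constraints. The ``hard part,'' such as it is, was already discharged in proving those two propositions (the exact $S$-lemma and Burer's result), so the proof of this theorem is essentially a two-line bookkeeping argument invoking them. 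I would state it as: by Proposition~\ref{lem:soc} in the first case and Proposition~\ref{lem:burer-yang} in the second, $\IA(\K) = \COP(\K)$; substituting this identity into \eqref{equ:cop_ldr} and \eqref{equ:cop_qdr} shows the inner-approximation SDPs coincide with the copositive programs, whence $Z^{\LDR}_{\textup{IA}} = Z^{\LDR}$ and $Z^{\QDR}_{\textup{IA}} = Z^{\QDR}$ by Theorems~\ref{the:ldr-ref} and~\ref{thm:qp}.
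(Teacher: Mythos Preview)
Your proposal is correct and matches the paper's treatment exactly: the paper states this theorem without proof, treating it as an immediate corollary of Propositions~\ref{lem:soc} and~\ref{lem:burer-yang}, which is precisely the two-line bookkeeping argument you outline. There is nothing to add.
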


\subsection{Approximation quality of the enhanced decision rules} 
We now restrict our study to the case of two-stage robust optimization problems with fixed recourse and  
with piecewise linear decision rules. In this setting, linear programming approximations have been 
proposed for the decision rule problems~\cite{Chen.Zhang.2009,Georghiou.Wiesemann.Kuhn.2015}. 
If in addition  the uncertainty set $\mathcal U$ is given by a hyperrectangle and each folding 
direction $\bm g_\ell$ is aligned with a coordinate axis, then these 
linear programs become exact~\cite{Georghiou.Wiesemann.Kuhn.2015}.
Unfortunately, for generic uncertainty sets  the resulting approximation can sometimes be of poor quality. %. even on trivial problem instances. 

\begin{example}[Partition Problem]
	\label{exa:partition}
	Consider  the following instance of IP feasibility problem (Example~\ref{exa:IP_Feas}), which corresponds to the NP-hard partition problem. 
	Given an input vector $\bm c \in \mathbb N^{K}$, the problem asks if one can partition 
	the components of $\bm c$ into two sets so that both sets have an equal sum. We can 
	reduce this problem to the instance of IP Feasibility problem that seeks for a binary 
	vector $\bm u \in\{-1,1\}^{K}$   within the polytope $\mathcal U=\{\bm u \in [-1,1]^K :  
	\bm c^\top  \bm u = 0\}$. If a partition exists then the components of $\bm u$ will denote 
	the indicator function of 
	the two sets. For example, if $\bm c = (1, 2 , 3)^\top$ then the possible solutions are 
	$\bm u=(1, 1, -1)^\top$ or  $\bm u= (-1, -1, 1)^\top$. On the other hand, 
	if $\bm c = (2, 2, 3)^\top$, then no such solution exists and necessarily the optimal 
	value of the corresponding norm maximization problem is strictly less than $K=3$. In particular, one can show that the optimal value is $2.5$, which is attained              by the solution $\bm u = (0.5, 1, 1)^\top$. 
	
	For the input $\bm c = (2, 2, 3)^\top$, the best piecewise linear decision rule approximation 
	in the literature yields a conservative upper 
	bound of $3$, which fails to certify the non-existence of binary solutions. On the other hand, 
	the semidefinite programming approximation of the equivalent copositive program yields a tighter upper bound of $2.54$, and thus provides a 
	correct certificate. As the corresponding two-stage problem has 
	fixed recourse, our scheme 
	allows to utilize quadratic decision rules. In this case, the resulting semidefinite program 
	yields the best optimal value of $2.5$. 
\end{example}
\noindent The above example highlights the surprising fact that, even for seemingly trivial low-dimensional problem instances,  one necessarily has to go through the copositive programming route in order to obtain a satisfactory approximation for the piecewise decision rule problem. 

We now formally establish that the  semidefinite programming approximation  obtained from 
applying  piecewise linear decision rules is never inferior to the state-of-the-art scheme by 
Georghiou et al.~\cite{Georghiou.Wiesemann.Kuhn.2015}. In the following, we briefly discuss their setting and formulate the corresponding  lifted uncertainty set $\Ubar$. For a cleaner exposition, we  primarily consider the setting of piecewise linear decision rules with axial segmentation where each folding direction is aligned with a coordinate axis. We remark that all results extend to the case  with general segmentation, albeit at the expense of more  cumbersome notation (see Section 4.2~of \cite{Georghiou.Wiesemann.Kuhn.2015}).  To this end,  let the interval 
$[\underline u_k,\overline u_k]$ be the marginal support of the $k$-th uncertain parameter.
For each coordinate axis $u_k$, we  generate~$L$ piecewise linear mappings  in view of 
prescribed breakpoints $h_{k,1}=\underline u_k<h_{k,2}<\ldots<h_{k,L}<\overline u_k$, 
as follows: 
	\begin{equation}
	\label{eq:mappings_Georghiou}
	\tilde F_{k,l}(\bm u)=
	\max\{0,u_k-h_{k,\ell}\}-\max\{0,u_k-h_{k,\ell+1}\}\qquad\forall \ell\in[L].
	\end{equation}
	To simplify the notation, we assume that there are exactly $L$ mappings for each coordinate axis.  
	Such a construction gives rise to the lifted uncertainty set 
	\begin{equation}
	\label{eq:expanded_Georghiou}
	\Ubar := \left\{
	(\bm w, \uu) \in \RR^{KL}\times\U :
	w_{k,\ell} = \tilde F_{k,\ell}(\uu) \quad \forall \, k\in[K]\  \ell \in [L]
	\right\}.
	\end{equation}
         Note that each mapping in \eqref{eq:mappings_Georghiou} can  be defined through the 
	difference  $\tilde F_{k,l}(\bm u)=F_{k,l}(\bm u)-F_{k,{l+1}}(\bm u)$, 
	where the functions $F_{k,l}(\bm u)=\max\{0,\bm f_{k,\ell}^\top\bm u\}$, $\ell \in[L]$, assume the standard form described 
	in~\eqref{eq:pw_mappings},
	with $\bm f_{k,\ell}=(\mathbf e_k,-h_{k,\ell})$, $\ell\in[L]$. By our construction of $\U'$, we can further impose that $F_{k,1}(\bm u)=u_k-\underline u_k$ and $F_{k,L+1}(\bm u)=0$. 
	
	Using Theorem \ref{prop:EDR}, the lifted set in \eqref{eq:expanded_Georghiou} can be reformulated as
	\begin{equation} \label{eq:expanded_Georghiou_}
	\Ubar = \left\{
	(\bm w, \uu) \in \RR^{KL} \times  \mathcal U :
	\begin{array}{ll}
	\bm z \in\RR_+^{K(L+1)}  \\  
	w_{k,\ell} = z_{k,\ell}-z_{k,\ell+1}& \forall k\in[K] \  \ell \in [L]\\
	z_{k,1}=u_{k}-\underline u_{k},\;z_{k,L+1}=0 & \forall \, k\in[K]\\
	\ z_{k,\ell} \geq u_k - h_{k,\ell} ,\   \overline{u}_k \geq z_{k,\ell} & \forall \, k\in[K] \  \ell \in [L+1]  \\
	z_{k,\ell}(z_{k,\ell} - u_k+h_{k,\ell})= 0 & \forall \, k \in [K]\  \ell \in [L+1]
	\end{array}
	\right\}.
	\end{equation}
	In view of our discussion in Section \ref{sec:enhanced}, an equivalent copositive
	program can thus be derived for the piecewise linear decision rule problem~\eqref{equ:pldr}. 
	We denote by $Z^{\mathcal{PL}}_{\textup{IA}}$  the optimal value of the corresponding semidefinite 
	programming approximation. Alternatively, in~\cite{Georghiou.Wiesemann.Kuhn.2015}, 
	a tractable outer approximation of $\Ubar$ is derived as follows:
	\begin{equation}
{\U}^{**} = \left\{
(\bm w, \uu) \in \RR^{KL}  \times \U :
\begin{array}{ll}
\displaystyle u_k-\underline u_k = \sum_{\ell\in[L]}w_{k,\ell}& \forall k\in[K] \\
h_{k,2}-\underline u_k \geq w_{k,1} & \forall k\in[K]  \\
(h_{k,\ell+1}-h_{k,\ell})w_{k,\ell-1} \geq(h_{k,\ell}-h_{k,\ell-1})w_{k,\ell} & \forall \, k\in[K]\  \ell \in [L]\setminus \{1\}
\end{array}
\right\}.
\end{equation}
By replacing the set $\U'$ with ${\U}^{**}$ in \eqref{equ:pldr}, one can obtain a linear decision rule approximation problem with a polyhedral uncertainty set, which can be reformulated to a tractable linear program if the recourse matrix $\BB'\left(\vv\right)$ is fixed. We denote by $Z^{\mathcal{PL}}_{\textup{GWK}}$ its optimal value.  We now examine the relation between $Z^{\mathcal{PL}}_{\textup{IA}}$ and $Z^{\mathcal{PL}}_{\textup{GWK}}$. To this end,  by using the copositive programming techniques, we first propose a looser outer approximation $\U^*$ of the lifted set $\U'$  and establish that the set is still tighter than $\U^{**}$. We define this outer approximation as 
\begin{equation*}
\label{eq:expanded_Georghiou_}
\U^* = \left\{
(\bm w, \uu) \in \RR^{KL} \times  \mathcal U :
\begin{array}{ll}
\bm z \in \RR_+^{K(L+1)} \\
\bm Z^{k,\ell}\in\mathcal S^3,\; \bm Z^{k,\ell}\succeq\bm 0 & \forall \, k \in[K] \  \ell \in [L]  \\  
w_{k,\ell} = z_{k,\ell} - z_{k,\ell+1}& \forall \, k\in[K]\  \ell \in [L]\\
z_{k,1} = u_{k}-\underline u_{k},\; z_{k,L+1}=0 & \forall \, k \in [K]\\
z_{k,\ell} \geq u_k-h_{k,\ell} ,\   \overline{u}_k \geq z_{k,\ell} & \forall \, k \in [K] \  \ell \in [L+1]  \\
Z^k_{1,3}-Z^k_{3,3}+ z_{k,\ell+1}(h_{k,\ell-1}-h_{k,\ell+1}) \geq 0 & \forall \, k \in [K]\  \ell \in [L] \\[1mm]
Z^k_{2,3}-Z^k_{3,3}+ z_{k,\ell+1}(h_{k,\ell}-h_{k,\ell+1}) \geq 0& \forall \, k\in[K]\  \ell \in [L] \\[1mm]
Z^k_{1,3}-Z^k_{1,1}+z_{k,\ell-1}(h_{k,\ell+1}-h_{k,\ell-1}) \\
\qquad+Z^k_{2,2}-Z^k_{2,3}+z_{k,\ell}(h_{k,\ell}-h_{k,\ell+1})\geq 0 & \forall \, k \in [K]\  \ell \in [L] \\[1mm]
Z^k_{3,3}-Z^k_{1,3}+z_{k,\ell+1}( h_{k,\ell+1}-h_{k,\ell-1}) \\
\qquad+Z^k_{1,2}-Z^k_{2,2}+z_{k,\ell}(h_{k,\ell-1}-h_{k,\ell})\geq 0& \forall \, k \in [K] \  \ell \in [L] \\[1mm]
Z^k_{2,3}-Z^k_{2,2}+z_{k,\ell}(h_{k,\ell+1}-h_{k,\ell}) \geq 0 & \forall \, k \in [K] \  \ell \in [L] \\[1mm]
\end{array}
\right\},
\end{equation*}
which is a concise set involving $\mathcal O(KL)$ semidefinite constraints  of size $3\times 3$. 
The following proposition shows the chain relation of $\U', \, \U^*$, and $\U^{**}$.
\begin{proposition} \label{prop:our_vs_Georghiou}
	We have $\U' \subseteq \U^*\subseteq \U^{**}$. 
\end{proposition}
\begin{proof}
See the Appendix.  
\end{proof}

Finally, we are ready to state our main result of this section in the following theorem.  
\begin{theorem}
	\label{thm:our_vs_Georghiou}
 	We have $Z^{\mathcal{PL}}_{\textup{IA}}\leq Z^{\mathcal{PL}}_{\textup{GWK}}$. 
\end{theorem}
\begin{proof}
See the Appendix.  
\end{proof}
\noindent The proof of Theorem \ref{prop:our_vs_Georghiou} imparts the favorable  insight that a tighter approximation can already be obtained by considering a  concise set involving $\mathcal O(KL)$ semidefinite constraints  of size $3\times 3$.

\section{Copositive reformulation for multi-stage decision rule problems}
\label{sec:multi-stage}
We now extend the proposed copositive programming approach to 
multi-stage robust optimization problems
 of the following generic form:
\begin{equation} \label{equ:msro}
\begin{array}{cl} 
\inf  & \cc^\top \bm x +  \sup \limits_{\bm{u} \in \U} \ \sum \limits_{t = 1}^T \bm d_t(\uu^t)^\top \bm y_t(\uu^t) \\
\st & \A(\uu^1) \bm x + \sum\limits_{t=1}^T \BB_t(\uu^t)\bm y_t(\uu^t) \geq \bm h(\uu) \ \ \ \forall \, \uu \in \U \\
     & \bm x \in \X, \ \bm y_t \in \FF_{K^t + 1, \, N_t} \ \forall \, t \in [T]. 
\end{array}
\end{equation}
The vector~$\bm u^t$ in \eqref{equ:msro} collects the history of observations up to time~$t$, and is defined as
\begin{equation*}
\label{equ:ut_history}
\bm u^t= (\uu_1, \ldots, \uu_t, 1)\in\RR^{K^t + 1},
\end{equation*}
where  $\bm u_t\in \RR^{K_t}$ contains uncertain parameters observed at time $t\in[T]$, and $K^t := \sum_{s=1}^t K_{s}$. Here, we have appended the constant scalar $1$ at the end of the vector so that affine functions in $(\bm{u}_1, \ldots, \bm{u}_t)$ can be represented  as linear functions in $\bm u^t$, while quadratic functions  in $(\bm{u}_1, \ldots, \bm{u}_t)$ can be formulated compactly in a homogenized manner. We set the vector of all uncertain  parameters in \eqref{equ:msro} to $\bm u:=\bm u^T\in\RR^{K+1}$, with $K=K^T$. As in the two-stage setting, the problem parameters $\A(\uu^1), \BB_t(\uu^t)$, $\bm d_t(\uu^t)$ and $\bm h(\uu)$
are described by linear functions in their respective arguments, as follows,
\[
\A(\uu^1) := \sum\limits_{k=1}^{K^1+1} [\uu^1]_{k}   \bm{\widehat A}_{k}, \ \  \BB_t(\uu^t) :=  \sum\limits_{k=1}^{K^t+1} [\uu^t]_{k} \, \bm{\widehat B}_{k,t}, \ \
\bm d_t(\uu^t) := \bm{\widehat D}_t\uu^t, \  \ \bm h(\uu) := \bm{\widehat H}\uu,
\]  
where $\bm{\widehat A}_{k} \in \RR^{J \times M},  \ \bm{\widehat B}_{k,t} \in \RR^{J \times N_t}$,
$\bm{\widehat D}_t := (\bm{\widehat d}_{1,t}, \ldots, \bm{\widehat d}_{N_t,t})^\top \in \RR^{N_t \times (K^t+1)}$,
and $\bm{\widehat H} := (\bm{\widehat h}_{1}, \ldots, \bm{\widehat h}_{J})^\top \in \RR^{J \times (K+1)}$ are deterministic data.

The  decision vector $\bm y_t(\bm u^t)\in \RR^{N_t}$ in \eqref{equ:msro} is chosen 
after the realization of uncertain parameters up to time~$t$ but before the revelation 
of future outcomes $\{ \uu_{s} \}_{ s \in [t+1, T]}$. The objective of problem~\eqref{equ:msro} is
to find a here-and-now decision $\bm x \in \X$ and a sequence of nonanticipative decision
rules $\{\bm y_t(\cdot)\}_{t\in[T]}$ that are feasible to the semi-infinite constraint in~\eqref{equ:msro} 
and  minimize the total cost $\bm c^\top \bm x + \sup_{\uu \in \U} \, 
\sum_{t=1}^T \bm d_t(\uu^t)^\top \bm y_t(\uu^t)$. Problem \eqref{equ:msro} constitutes an extension of the  
two-stage problem~\eqref{equ:tsro} to the multi-stage setting, and as such is computationally challenging to 
solve. To this end, we endeavor to derive copositive programming reformulations in view of linear and quadratic 
decision rules. Tractable semidefinite programming approximations can then be derived using the techniques 
discussed in Section \ref{sec:SDP_approx}. One can further enhance these approximations by utilizing piecewise 
linear and piecewise quadratic decision rules discussed in Section \ref{sec:enhanced}. 

As in the two-stage setting, we assume that the uncertainty set $\U$ is defined as in (\ref{equ:uncertainty})
and satisfies both Assumptions~\ref{ass:bounded} and~\ref{ass:quadcons}. In the following, we use the 
linear truncation operator $\bm \Pi_t : \RR^{K+1} \mapsto \RR^{K^t+1}$ that satisfies
\[
\bm \Pi_t\uu=\uu^t \ \ \ \forall \, \uu \in \RR^{K+1}.
\]
We first examine the case when the multi-stage robust optimization problem has non-fixed
recourse. Here, we apply the linear decision rules
\[
\bm y_t(\uu^t) = \bm Y_t \uu^t =  \bm Y_t\bm \Pi_t \uu,% \ \ \ \forall \, \uu \in \RR^{K+1},
\]
for some coefficient matrix $\bm Y_t \in \RR^{N_t \times (K^t+1)}$. This gives rise to the following conservative approximation
of problem (\ref{equ:msro}):
\begin{equation} \label{equ:msro-ldr}
\tag{$\mathcal{ML}$}
\begin{array}{rcl} 
Z^{\mathcal{ML}}=&\inf  & \cc^\top \bm x +  \sup \limits_{\bm{u} \in \U} \ \sum \limits_{t = 1}^T  \bm d_t(\uu^t)^\top  \bm Y_t\bm \Pi_t \uu\\ %\bm \uu^t  \\
&\st & \A(\uu^1) \bm x + \sum\limits_{t=1}^T \BB_t(\uu^t)\bm Y_t \bm\Pi_t \uu \geq \bm h(\uu) \ \ \ \forall \, \uu \in \U \\
&     & \bm x \in \X, \ \bm Y_t \in \RR^{N_t \times (K^t+1)} \ \forall \, t \in [T].
\end{array}
\end{equation} 
Problem \eqref{equ:msro-ldr} shares the same structure as its two stage counterpart \eqref{equ:ldr}. 
Hence, by employing the same reformulation techniques described in Section \ref{sec:ldr}, we can 
derive a polynomial size  copositive program for the problem. 
\noindent For notational convenience, in the following we define the matrices
{\small
\begin{equation*} \label{equ:ABmat_T1}
\begin{array}{l}
\bm{\widehat \Theta}_{j} := \begin{pmatrix} \mathbf{e}_{j}^\top\bm{\widehat A}_{1} \\ \vdots \\ \mathbf{e}_{j}^\top\bm{\widehat A}_{K^1+1} \end{pmatrix}  \in \RR^{(K^1+1) \times M},\quad
\bm{\widehat \Lambda}_{j,t} := \begin{pmatrix} \mathbf{e}_{j}^\top\bm{\widehat B}_{1,t} \\ \vdots \\ \mathbf{e}_{j}^\top\bm{\widehat B}_{K^t+1,t} \end{pmatrix}  \in \RR^{(K^t+1) \times N_t} \qquad \forall \, t \in [T] \ \forall \, j \in [J],
\end{array}
\end{equation*}
}
\noindent  and the affine functions
\begin{equation*} \label{equ:OmegaxY1}
\begin{array}{l}
\bm{\Omega}_{j} \left( \bm x, \,  \bm Y_1, \ldots, \bm Y_T\right) := 
  \dfrac12 \bm \Pi_1^\top ( \bm{\widehat \Theta}_{j} \x \mathbf{e}_{K^1+1}^\top + \mathbf{e}_{K^1+1}\x^\top \bm{\widehat \Theta}_{j}^\top) \bm \Pi_1 \\
 \qquad\qquad+ \dfrac12 \sum\limits_{t=1}^T \bm \Pi_t^\top(  \bm{\widehat \Lambda}_{j,t}\bm Y_t + \bm Y_t^\top\bm{\widehat \Lambda}_{j,t} ) \bm \Pi_t - \dfrac12(\bm{\widehat h}_j\mathbf{e}_{K+1}^\top + \mathbf{e}_{K+1}\bm{\widehat h}_j^\top) \quad\forall j\in[J].
\end{array}
\end{equation*}
The equivalent reformulation is provided in the following theorem. We omit the proof as it closely follows that of Theorem \ref{the:ldr-ref}. 
\begin{theorem} \label{the:msro-ldr-ref}
Problem 
(\ref{equ:msro-ldr}) is equivalent to the following copositive program:
%{\small
\begin{equation} \label{equ:cop_msro_ldr}
\begin{array}{rcll} 
Z^{\mathcal{ML}}=&\inf  & \cc^{\top} \x + \lambda \\
&\st  & \lambda \, \mathbf{e}_{K+1}\mathbf{e}_{K+1}^\top -  \dfrac12 \sum\limits_{t=1}^T  \bm \Pi_t^\top\left( \mathbf{\widehat D}_t^\top\bm{Y}_t + \bm{Y}_t^\top \mathbf{\widehat D}_t \right)\bm \Pi_t  + \sum\limits_{i=1}^I \alpha_i \bm{\widehat C}_i \in \COP(\K) \\
&     &  \bm{\Omega}_{j} \left( \bm x, \,  \bm Y_1, \ldots, \bm Y_T\right)  - \pi_{j} \, \mathbf{e}_{K+1}\mathbf{e}_{K+1}^\top - \sum\limits_{i=1}^I  \mathbf{e}_i^\top \bm{\beta}_{j} \bm{\widehat C}_i  \in \COP(\K) \ \ \ \forall \, j \in [J]  \\   
&     & \lambda \in \RR, \ \x \in \mathcal X,  \ \bm \alpha \in \RR^I, \  \bm \pi \in \RR_+^{J},  \  \bm \beta_{j} \in \RR^I  \ \ \forall \,  j \in [J], \ \bm{Y}_t \in \RR^{N_t \times (K^t+1)} \  \ \forall \, t \in [T].
\end{array}
\end{equation}
%}
\end{theorem}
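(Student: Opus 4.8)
\textbf{Proof plan for Theorem~\ref{the:msro-ldr-ref}.} The plan is to mirror the derivation of Theorem~\ref{the:ldr-ref} verbatim, treating the only new structural ingredient---the nonanticipativity truncations $\bm\Pi_t$---as harmless bookkeeping. First I would substitute the linear decision rules $\bm y_t(\uu^t)=\bm Y_t\bm\Pi_t\uu$ into problem~\eqref{equ:msro} and expand the problem data via their defining linear expressions $\A(\uu^1)=\sum_k[\uu^1]_k\bm{\widehat A}_k$, $\BB_t(\uu^t)=\sum_k[\uu^t]_k\bm{\widehat B}_{k,t}$, $\bm d_t(\uu^t)=\bm{\widehat D}_t\uu^t$, $\bm h(\uu)=\bm{\widehat H}\uu$. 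Using $\bm\Pi_t\uu=\uu^t$ and the definitions of $\bm{\widehat\Theta}_j$ and $\bm{\widehat\Lambda}_{j,t}$, each term $[\uu^t]_k$ multiplying a decision becomes a bilinear form in $\uu$: the product $\A(\uu^1)\x$ rewrites as $\uu^\top\bm\Pi_1^\top\bm{\widehat\Theta}_j\x$ in row $j$, and $\BB_t(\uu^t)\bm Y_t\bm\Pi_t\uu$ rewrites as $\uu^\top\bm\Pi_t^\top\bm{\widehat\Lambda}_{j,t}\bm Y_t\bm\Pi_t\uu$; summing over $t$ and symmetrizing yields precisely the quadratic form $\uu^\top\bm\Omega_j(\x,\bm Y_1,\dots,\bm Y_T)\uu$ for the $j$-th constraint, and an analogous computation on the objective produces the form $\tfrac12\uu^\top\sum_t\bm\Pi_t^\top(\bm{\widehat D}_t^\top\bm Y_t+\bm Y_t^\top\bm{\widehat D}_t)\bm\Pi_t\uu$. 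This is the step where the $\bm\Pi_t$'s must be carried carefully, but it is entirely routine linear algebra.

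Once both the objective's inner supremum and each semi-infinite constraint have been written as the optimization of a quadratic form $\uu^\top\bm M\uu$ over $\U$ (for an appropriate symmetric $\bm M$ depending affinely on $(\x,\lambda,\bm Y_1,\dots,\bm Y_T)$), the argument is identical to that of Theorem~\ref{the:ldr-ref}. For the objective, I would introduce an epigraph variable $\lambda$, invoke Lemma~\ref{lem:burer} (valid under Assumption~\ref{ass:bounded}) to replace the nonconvex maximization over $\U$ by the completely positive program~\eqref{equ:qp-cp}, dualize to obtain the copositive program~\eqref{equ:qp-cop}, and apply Lemma~\ref{lem:interior} to close the duality gap; this produces the objective $\cc^\top\x+\lambda$ and the first copositive constraint. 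For each $j\in[J]$, I would rewrite the $j$-th robust constraint as $\inf_{\uu\in\U}\uu^\top\bm\Omega_j(\x,\bm Y_1,\dots,\bm Y_T)\uu\ge0$, again apply Lemmas~\ref{lem:burer} and~\ref{lem:interior} to pass to the completely positive reformulation and its copositive dual with dual multipliers $\pi_j\ge0$ and $\bm\beta_j\in\RR^I$, and collect the $J$ resulting constraints. Assembling these pieces gives exactly~\eqref{equ:cop_msro_ldr}.

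The main obstacle, such as it is, is purely notational rather than conceptual: one must verify that after the $\bm\Pi_t$-substitution the resulting quadratic forms are genuinely homogeneous quadratics in the full vector $\uu\in\RR^{K+1}$ (so that Lemma~\ref{lem:burer} applies to the uncertainty set $\U$ in the ambient space $\RR^{K+1}$, not to the stage-wise projections), and that the $u_{K+1}=1$ normalization correctly homogenizes the affine terms $\bm{\widehat h}_j^\top\uu$ and the first-stage term into the $\mathbf e_{K+1}\mathbf e_{K+1}^\top$ and outer-product structure appearing in $\bm\Omega_j$. Since $\bm\Pi_t$ is a fixed linear map and $\bm\Pi_t\uu=\uu^t$ simply selects the first $K^t+1$ coordinates (with the last coordinate always equal to $1$), these checks go through without difficulty. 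Because every invocation of Lemmas~\ref{lem:burer}, \ref{lem:interior} and Assumptions~\ref{ass:bounded}--\ref{ass:quadcons} is on the set $\U$ in $\RR^{K+1}$---which is assumed to satisfy exactly those hypotheses in the multi-stage setting---no new regularity conditions are needed, and the proof is complete by appeal to Theorem~\ref{the:ldr-ref}'s argument applied mutatis mutandis.
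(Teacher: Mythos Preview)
Your proposal is correct and follows exactly the approach the paper itself indicates: the paper omits the proof entirely, stating only that it ``closely follows that of Theorem~\ref{the:ldr-ref},'' which is precisely the mutatis mutandis argument you spell out. One small inaccuracy worth fixing: $\bm\Pi_t$ does not select the first $K^t+1$ coordinates of $\uu$ but rather the first $K^t$ coordinates together with the last coordinate $u_{K+1}=1$; this does not affect your argument since all you need is that $\bm\Pi_t$ is a fixed linear map so that $\uu^\top\bm\Pi_t^\top\bm M\bm\Pi_t\uu$ is a homogeneous quadratic in the full vector $\uu\in\RR^{K+1}$.
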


Next, we consider the case when the multi-stage problem has  fixed recourse, 
i.e.,
\[
\begin{array}{l}
\dd_t(\uu^t)=\bm{\widehat d}_t   \  \ \ \text{and} \ \  \  \BB_t(\uu^t)=\bm{\widehat B}_t \qquad  \forall \, \uu^t \in \RR^{K^t+1} \ \forall \, t \in [T], 
\end{array}
\]
where 
$\bm{\widehat d}_t \in \RR^{N_t}$ and  $\bm{\widehat B} \in \RR^{J \times N_t}$ 
are deterministic vector and matrix, respectively. 
Here, we can apply the quadratic decision rules
\[
[\bm y(\uu^t)]_{n_t} = (\uu^t)^\top \bm Q_{n_t,t} \uu^t=(\bm\Pi^t\uu)^\top \bm Q_{n_t,t} \bm\Pi^t\uu \qquad \forall \, n_t \in [N_t],% \ \ \forall \, \uu \in \RR^{K+1},
\]
for some coefficient matrices $\bm Q_{n_t,t} \in \SYM^{K^t+1}$,$n_t \in [N_t]$, $t \in [T]$. 
This yields the following conservative approximation of problem \eqref{equ:msro}:
\begin{equation} \label{equ:msro-qdr-ref}
\tag{$\mathcal{MQ}$}
\begin{array}{rcll} 
Z^{\mathcal{MQ}}=&\inf  &\displaystyle \cc^{\top}\x +\sup\limits_{\uu \in \U} \sum \limits_{t=1}^T \sum\limits_{n_t=1}^{N_t} \widehat d_{n_t, t} (\bm\Pi^t\uu)^\top \bm Q_{n_t,t} \bm\Pi_t\uu\\
&\st  &  \displaystyle (\bm\Pi_1\uu)^\top\bm{\widehat \Theta}_j \bm x + \sum\limits_{t=1}^T \sum\limits_{n_t=1}^{N_t}\left( b_{j,n_t} (\bm\Pi_t\uu)^\top \bm Q_{n_t,t} \bm\Pi_t\uu\right)\geq \bm h(\uu) \ \ \ \forall \, \uu \in \U \\
&& \x \in \X,\;\bm Q_{n_t, t} \in \SYM^{K^t+1} \ \ \ \forall \, t \in [T] \ \forall \,  n_t \in [N_t]. 
\end{array}
\end{equation}
\noindent Problem \eqref{equ:msro-qdr-ref} shares the same structure as its two-stage counterpart \eqref{equ:qdr-ref}, which indicates that it is also amenable to an equivalent copositive programming reformulation. To this end, we define the affine functions
\begin{equation*} \label{equ:GxQ1}
\begin{array}{l}
\bm{\Gamma}_{j}\left( \bm x, \bm Q_{1, 1} \ldots, \bm Q_{N_T, T} \right) := \dfrac12 \bm{\Pi}_1^\top( \bm{\widehat \Theta}_{j} \x \mathbf{e}_{K^1+1}^\top
 + \mathbf{e}_{K^1+1}\x^\top \bm{\widehat \Theta}_{j}^\top)\bm{\Pi}_1\\
 \qquad\qquad\qquad - \dfrac12(\mathbf{e}_{K+1}\bm{\widehat h}_{j}^\top - \bm{\widehat h}_{j} \mathbf{e}_{K+1}^\top) + \sum\limits_{t=1}^T\sum \limits_{n_t=1}^{N_t} \, \widehat b_{j,n_t} \bm \Pi_t^\top \bm{Q}_{n_t,t}  \bm{\Pi}_t\qquad\forall j\in[J]. 
\end{array}
\end{equation*}
The equivalent reformulation is provided in the following theorem whose proof is omitted as it closely follows that of Theorem \ref{thm:qp}. 
\begin{theorem}
Problem (\ref{equ:msro-qdr-ref}) is equivalent to the following copositive program:
\begin{equation} \label{equ:cop_msro_qdr}
\begin{array}{rll}
Z^{\mathcal{MQ}}=&\min  & \cc^\top \x + \lambda \\
&\st & \lambda \, \mathbf{e}_{K+1}\mathbf{e}_{K+1}^\top - \sum\limits_{t=1}^T \sum \limits_{n_t=1}^{N_t} \,\left[ \widehat d_{n_t,t} \bm \Pi_t^\top \bm{Q}_{n_t,t} \bm \Pi_t \right]+ \sum\limits_{i=1}^I  \alpha_i \bm{\widehat C}_i \in \COP(\K) \\
&    & \bm{\Gamma}_{j}\left( \bm x, \bm Q_{1, 1} \ldots, \bm Q_{N_T, T} \right)  - \pi_j \mathbf{e}_{K+1}\mathbf{e}_{K+1}^\top - \sum\limits_{i=1}^I  [\mathbf{e}_i^\top \bm \beta_{j}] \,  \bm{\widehat C}_i  \in \COP(\K)  \ \ \ \forall \, j \in [J] \\
&    & \lambda \in \RR, \ \x \in \X, \  \bm \alpha \in \RR^I, \ \bm{\pi} \in\RR_+^{J}, \ \bm \beta_{j} \in \RR^I \ \ \ \forall \,  j \in [J] \\
&    &  \bm Q_{n_t,t}\in\SYM^{K^t+1} \ \ \ \forall \, t \in[T] \, \  \forall \, n_t \in [N_t]  
\end{array}
\end{equation}
\end{theorem}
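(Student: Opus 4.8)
The plan is to lift the proof of Theorem~\ref{thm:qp} to the multi-stage setting essentially verbatim; the only genuinely new element is the algebra of pushing the quadratic decision-rule terms through the truncation operators $\bm{\Pi}_t$. First I would fix the coefficient matrices $\bm Q_{n_t,t}\in\SYM^{K^t+1}$ and rewrite each recourse component as a homogeneous quadratic form in the \emph{full} uncertainty vector $\uu\in\RR^{K+1}$: since $[\bm y_t(\uu^t)]_{n_t}=(\bm{\Pi}_t\uu)^\top\bm Q_{n_t,t}(\bm{\Pi}_t\uu)=\uu^\top(\bm{\Pi}_t^\top\bm Q_{n_t,t}\bm{\Pi}_t)\uu$, and because $\bm{\Pi}_t^\top\bm Q_{n_t,t}\bm{\Pi}_t$ only involves the first $K^t+1$ coordinates of $\uu$, non-anticipativity is automatically respected. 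With this substitution the inner supremum in the objective of~\eqref{equ:msro-qdr-ref} becomes $\sup_{\uu\in\U}\uu^\top\bm{M}\uu$ with $\bm{M}:=\sum_{t=1}^T\sum_{n_t=1}^{N_t}\widehat d_{n_t,t}\,\bm{\Pi}_t^\top\bm Q_{n_t,t}\bm{\Pi}_t\in\SYM^{K+1}$, an instance of~\eqref{equ:qp} with $\bm{\widehat C}_0=\bm{M}$ (recall $\U$ already has the standard form~\eqref{equ:uncertainty} and, by the standing hypothesis of this section, satisfies Assumptions~\ref{ass:bounded} and~\ref{ass:quadcons}). Lemma~\ref{lem:burer} turns this into the completely positive program~\eqref{equ:qp-cp}, Lemma~\ref{lem:interior} supplies strong duality with the copositive dual~\eqref{equ:qp-cop}, and substituting that dual back produces the objective $\cc^\top\x+\lambda$ and the first copositive constraint of~\eqref{equ:cop_msro_qdr}; the multiplier $\bm\alpha\in\RR^I$ is free because it corresponds to the \emph{equality} constraints $\bm{\widehat C}_i\bullet\bm U=0$, so the sign attached to $\sum_i\alpha_i\bm{\widehat C}_i$ is irrelevant.

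Next I would treat the semi-infinite constraint row by row. For fixed $j\in[J]$, I substitute the linear parametrizations $\A(\uu^1)=\sum_k[\uu^1]_k\bm{\widehat A}_k$ and $\bm h(\uu)=\bm{\widehat H}\uu$ together with the fixed-recourse data $\bm{\widehat B}_t,\bm{\widehat d}_t$, and use the normalization $\mathbf{e}_{K+1}^\top\uu=1$ to homogenize the affine terms $\uu^\top\bm{\widehat \Theta}_j\x$ and $\bm{\widehat h}_j^\top\uu$. The $j$-th constraint then collapses to $\uu^\top\bm{\Gamma}_j(\x,\bm Q_{1,1},\ldots,\bm Q_{N_T,T})\,\uu\ge 0$ for all $\uu\in\U$, with $\bm{\Gamma}_j$ the symmetric matrix defined in~\eqref{equ:GxQ1}. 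This is again an instance of~\eqref{equ:qp} (now with $\bm{\widehat C}_0=-\bm{\Gamma}_j$ and the optimization direction reversed), so Lemmas~\ref{lem:burer} and~\ref{lem:interior} allow me to replace $\inf_{\uu\in\U}\uu^\top\bm{\Gamma}_j\uu\ge 0$ by the requirement that there exist $\pi_j\ge 0$ and $\bm\beta_j\in\RR^I$ with $\bm{\Gamma}_j(\x,\bm Q_{1,1},\ldots,\bm Q_{N_T,T})-\pi_j\mathbf{e}_{K+1}\mathbf{e}_{K+1}^\top-\sum_{i=1}^I[\mathbf{e}_i^\top\bm\beta_j]\,\bm{\widehat C}_i\in\COP(\K)$. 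Collecting this over all $j\in[J]$ yields the second constraint system of~\eqref{equ:cop_msro_qdr}, which finishes the reformulation.

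I expect the only place where care is genuinely needed --- and the step I would spell out most carefully --- is the homogenization identity: after inserting the truncations $\bm{\Pi}_t$, one must check that the sum of the quadratic recourse terms and the homogenized affine data reassembles \emph{exactly} into the matrix $\bm{\Gamma}_j$ of~\eqref{equ:GxQ1}, with the symmetrizers $\tfrac12(\,\cdot+\cdot^\top)$ and the $\bm{\Pi}_t^\top(\cdot)\bm{\Pi}_t$ conjugations landing in the right positions. This is a mechanical but slightly lengthy computation entirely parallel to its two-stage ancestor; since the uncertainty set $\U$ is unchanged, no new regularity assumptions are needed and Lemmas~\ref{lem:burer}--\ref{lem:interior} apply word for word. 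For the remaining routine details I would simply refer the reader to the proof of Theorem~\ref{thm:qp}.
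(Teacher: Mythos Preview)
Your proposal is correct and follows exactly the approach the paper indicates: the paper omits the proof entirely, stating only that it ``closely follows that of Theorem~\ref{thm:qp},'' which is precisely the lift-via-$\bm\Pi_t$ argument you outline. You have in fact supplied more detail than the paper does, and the care you flag around the homogenization identity (reassembling the symmetrized affine pieces and the $\bm\Pi_t^\top(\cdot)\bm\Pi_t$ conjugations into $\bm\Gamma_j$) is the only place where the multi-stage bookkeeping differs from the two-stage case.
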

\begin{remark}
In some multi-stage robust optimization problems, we may observe that some of the 
recourse decision variables are multiplied with uncertain parameters, while the 
remaining recourse decisions are multiplied with deterministic terms. In such situations,
we can apply quadratic decision rules to the latter, which gives rise to
stronger decision rule approximations. With minimum modification
we can reformulate the decision rule problem into an equivalent copositive program similar to~\eqref{equ:cop_msro_qdr}. 
We omit the detailed reformulation here.  
\end{remark}

\section{Numerical experiments}
\label{sec:numerical}

In this section, we assess the effectiveness of our copositive programming approach over three applications in operations management. The first example is a multi-item newsvendor problem, which can be reformulated to a two-stage robust optimization problem with fixed recourse. The following two examples are inventory control and index tracking problems, which correspond to multi-stage robust optimization problems with non-fixed recourse. 
All optimization problems are solved using MOSEK 8.1.0.56~\cite{Mosek} via the YALMIP interface~\cite{Lofberg2004yalmip}~on a 16-core 3.4 GHz  Linux PC with 32 GB RAM. 

\subsection{Multi-item newsvendor}
We consider the following robust multi-item newsvendor problem studied 
in \cite{Ardestani-Jaafari.Delage.2016}:
\begin{equation} \label{equ:vendor}
\max \limits_{\bm x \ge \bm 0} \min \limits_{\bm \xi \in \Xi} \sum_{n =1}^N
\big(
    r_n \min(x_n, \xi_n) -c_n x_n - s_n \max (\xi_n - x_n, 0)
\big). 
\end{equation}
Here, $N$ represents the number of products; $\bm x$ is the vector of order quantities; $\bm \xi$ is the vector of uncertain demands; 
$\bm r$, $\bm c$, and $\bm s$ are the vector of sales prices, order costs, and shortage costs, respectively. We assume that the products do not have a salvage value, and the salvage value is set to~$0$. 
%$r_n, c_n$, and $s_n$ denote the sale price, order cost, and shortage cost 
%of a unit of product $n$. 
Problem (\ref{equ:vendor}) can be reformulated as the  two-stage robust optimization problem given by
\begin{equation} \label{equ:two-stage-vendor}
\begin{array}{lll}
\max \limits_{\bm x, \bm y(\cdot)} & \min \limits_{\bm \xi \in \Xi} \sum\limits_{n=1}^N y_n(\bm \xi) & \\
\st & y_n(\bm \xi) \le (r_n - c_n) x_n - r_n (x_n - \xi_n) &  \bm \xi \in \Xi, \,\forall \, n \in [N] \\  
     & y_n(\bm \xi) \le (r_n - c_n)x_n - s_n(\xi_n - x_n) &  \bm \xi \in \Xi, \, \forall \, n \in [N] \\
     & \bm x \ge \bm 0. 
\end{array}
\end{equation}
In this problem, the uncertainty set  is specified through a factor model defined as
\[
\Xi := \left\{ \bm \xi \in \RR^N : 
\begin{array}{l}
\bm \xi = \bm{\bar \xi} + \Diag(\bm{\hat \xi}) \bm{F}\bm{\zeta},  \\
\bm \zeta \in \RR^N ,\;\|\bm \zeta\|_\infty \leq 1, \, \|\bm \zeta\|_1 \leq \rho 
\end{array}
\right\},
\]
where $\bm \zeta$ is a vector comprising all factors, $\bm F \in \RR^{N \times N}$ is the factor loading matrix, and $\rho < N$ 
is a scalar that controls the level of conservativeness. 
The associated cone $\K$ related to this uncertainty set is written as
\[
\K := \left\{ (\bm \xi, \tau) \in \RR^N \times \RR_+ : 
\begin{array}{l}
\bm \xi = \bm{\bar \xi}\tau + \Diag(\bm{\hat \xi}) \bm{F}\bm{\zeta},  \\
\bm \zeta \in \RR^N ,\;\|\bm \zeta\|_\infty \leq \tau, \, \|\bm \zeta\|_1 \leq \rho\tau
\end{array}
\right\}.
\]

As the problem has fixed recourse, we can apply the quadratic decision rule scheme (QDR) proposed in Section~\ref{sec:qdr}  and solve the semidefinite approximation which results from replacing the copositive cone $\COP(\K)$  with the inner approximation $\IA(\K)$ defined in (\ref{equ:innercone}). We compare our QDR scheme with the one proposed by Ben-Tal et al.~(BGGN)~where we replace the cone $\COP(\K)$  with the inner approximation $\AS(\K)$ defined in (\ref{equ:ascone}), with the polynomial decision rule scheme of degree~$3$ (PDR3), and with the piecewise linear decision rule scheme proposed by Georghiou et al.~(GWK)~\cite{Georghiou.Wiesemann.Kuhn.2015}. In addition, we also compare our method with state-of-the-art schemes for two-stage robust optimization problems with fixed recourse:  the method (COP) described in~\cite{Xu.Burer.2018} and the method (AJD) described in~\cite{Ardestani-Jaafari.Delage.2016}. 
We note that these two methods generate the same solutions with comparable computational times.
%We note that these two methods generate the same solutions with comparable computational times, thereby we only report the computational statistics from method COP here. 
%This is verified by the numerical results shown in Table~\ref{tab:newsvendor_value}. 
%Another state-of-the-art scheme proposed in generates the same results as COP, thereby we do not report them.

All experimental results are averaged over $100$ random instances. We utilize the mechanism in~\cite{Ardestani-Jaafari.Delage.2016} to set up the parameters and to generate the random instances. For each instance, we consider $N=5$ items, and set $\bm r=80\mathbf e$ and $\bm p=60\mathbf e$. We further sample the vector $\bm c$ uniformly at random from the hypercube $[40,60]^5$. For the uncertainty set, we set $\rho = 4$ and $\bm{\bar \xi} = 60\mathbf e$, while the vector  $\bm{\hat \xi}$ 
is generated uniformly at random  from $[50,60]^5$.  We sample each entry of the matrix $\bm F$ uniformly 
from $[-1,1]$, and normalize each row so that its sum is equal to $1$. 
Table \ref{tab:newsvendor_value} reports several statistics of  relative gaps between the optimal value of QDR and those of the other alternative methods. %The formula to compute the relative gap
We find that QDR provides a substantial average improvement of $52\%$ over BGGN and an average improvement of $22.3\%$ over GWK. Rather surprisingly, we also find that QDR outperforms the state-of the-art COP and AJD schemes by 6\%. Table \ref{tab:newsvendor_value} indicates that
QDR generates the same performance as the  less tractable PDR3. 
Table \ref{tab:newsvendor_time} reports the average computation times of the four methods. We observe that QDR can be solved as fast as BGGN, GWK, COP, and AJD, while it takes 40 times as long to solve PDR3.    
In summary, we may thus conclude that QDR provides high-quality solutions in a very efficient manner. 
{\color{red}
\begin{table}[tbp]
\centering
\begin{tabular}{c|cccccc}  % repeats {c|} 5 times
\multicolumn{1}{c}{} & \multicolumn{5}{c}{Approximation method} \\ \hline
Statistic &  BGGN & GWK& COP  & AJD & PDR3    \\ \hline
 $10$th percentile & 26.5& 26.5 & 2.3 & 2.3  & 0  \\ 
  Mean & 52.0 & 52.0 &  6.0 & 6.0 & 0 &  \\ 
  $90$th percentile & 87.3& 87.3 &  9.7 & 9.7 & 0  \\ 
\hline\hline
\end{tabular}
%\begin{tabular}{c|ccccc}  % repeats {c|} 5 times
%\multicolumn{1}{c}{} & \multicolumn{5}{c}{Approximation method} \\ \hline
%Statistic &  BGGN & GWK& COP   & PDR3    \\ \hline
% $10$th percentile & 26.5& 26.5  & 2.3  & 0  \\ 
%  Mean & 52.0 & 52.0 &  6.0  & 0 &  \\ 
%  $90$th percentile & 87.3& 87.3 &  9.7  & 0  \\ 
%\hline\hline
%\end{tabular}
\caption{Relative gaps (in percent) between the alternative approximation schemes and QDR}
\label{tab:newsvendor_value}
\end{table}
\begin{table}[tbp]
\centering
%\begin{tabular}{c|ccccc}  % repeats {c|} 4 times
% & BGGN  & GWK & COP  &  QDR & PDR3    \\ \hline
%Time & 1.68 & 1.75 & 1.61  & 1.62 & 62.17   \\
%\hline\hline
%\end{tabular}
\begin{tabular}{c|cccccc}  % repeats {c|} 4 times
 & BGGN  & GWK & COP & AJD &  QDR & PDR3    \\ \hline
Time & 1.68 & 1.75 & 1.61 & 1.59 & 1.62 & 62.17   \\
\hline\hline
\end{tabular}
\caption{The average computation times (in seconds) of the different approximation schemes}
\label{tab:newsvendor_time}
\end{table}
}
\begin{remark} Since  COP corresponds to a semidefinite programming approximation of the exact copositive reformulation of the newsvendor problem, it is indeed very surprising that QDR can outperform COP. 
For the  temporal network example  described  in~\cite{Xu.Burer.2018} where the uncertainty set  is given by a $1$-norm ball, 
one can formally prove that  QDR performs better than COP. In general, however, we cannot prove that one 
approximation is tighter than the other, or vice versa.
\end{remark}

We also assess the quality of the first-stage decisions (order quantities) obtained from the different approximation methods by evaluating their true worst-case profits. Since the profit function in \eqref{equ:vendor} is concave, the worst-case profit of any fixed decision occurs at a demand scenario from an extreme point of the uncertainty set $\Xi$. Thus, we can enumerate all extreme points of the uncertainty set\footnote{In general, it is computationally prohibitive to enumerate the extreme points of a polyhedral set. However, it is manageable for our case since there are only a few variables and constraints involved.} and find the one that minimizes the profit to determine the worst-case scenario profit of each first-stage decision. Table~\ref{tab:newsvendor_simulation} reports the statistics of relative gaps between the worst-case-scenario profit of our method and those of other methods. We find that the proposed QDR scheme provides substantial average improvements of 36.3\%, 36.3\%, 4.7\%, and 4.7\% over BGGN, GWK, COP, and AJD, respectively.
%We conduct another experiment to verify the effectiveness of the solution from each of the above approaches. In particular, we enumerate the extreme points of the uncertainty set $\Xi${\footnote{It is  computational prohibited to enumerate the extreme points of a polyhedral set. However, it is computationally possible for the case where there are only a few variables and constraints involved.}} and calculate the worst-case scenario cost of implementing the solutions. We list the statistics of the worst-case-scenario costs of the solutions over the 100 random instances in Table~\ref{tab:newsvendor_simulation}.

\begin{table}[tbp]
\centering
\begin{tabular}{c|ccccccc}  % repeats {c|} 5 times
\multicolumn{1}{c}{} & \multicolumn{5}{c}{Approximation method} \\ \hline
Statistic &  BGGN & GWK & COP  & AJD & PDR3    \\ \hline
 $10$th percentile & 14.5& 14.5 & 2.1& 2.1  & 0  \\ 
  Mean & 36.3 & 36.3 &  4.7 & 4.7 & 0 &  \\ 
  $90$th percentile & 76.2& 76.2 &  9.3 & 9.3 & 0  \\ 
\hline\hline
\end{tabular}
\caption{Relative gaps (in percent) of the worst-case-scenario profits between the alternative approximation schemes and QDR}
\label{tab:newsvendor_simulation}
\end{table}

Finally, we analyze the optimal decision rules from the different approximation methods by considering an instance of robust two-item newsvendor problems.  %consider a two-item newsvendor problem and solve a particular instance by using the above decision rule approximation schemes. 
Figure~\ref{fig:newsvendor} visualizes the decision rules from the different methods as a function of the demand $\xi_2$ of the second item. We observe that the quadratic function  from QDR and the polynomial function from PDR3  coincide with the optimal decision rules (Optimal) at the extreme points of the uncertainty set.
%pass two ending points from the optimal decision rule function, which is a piecewise linear function. 
This  implies that QDR and PDR3 can generate optimal order quantities  as they anticipate for the worst-case demand scenarios. On the other hand, BGGN generates suboptimal order quantities as its decision rule function  does not coincide with the extremes of the optimal decision rules. We note that the COP method does not generate any decision rules.
We further remark that GWK and BGGN return the same decision rule, and thus we only plot the one from BGGN. 

\begin{figure}[ht]
  \centering
  \includegraphics[width=4.5in]{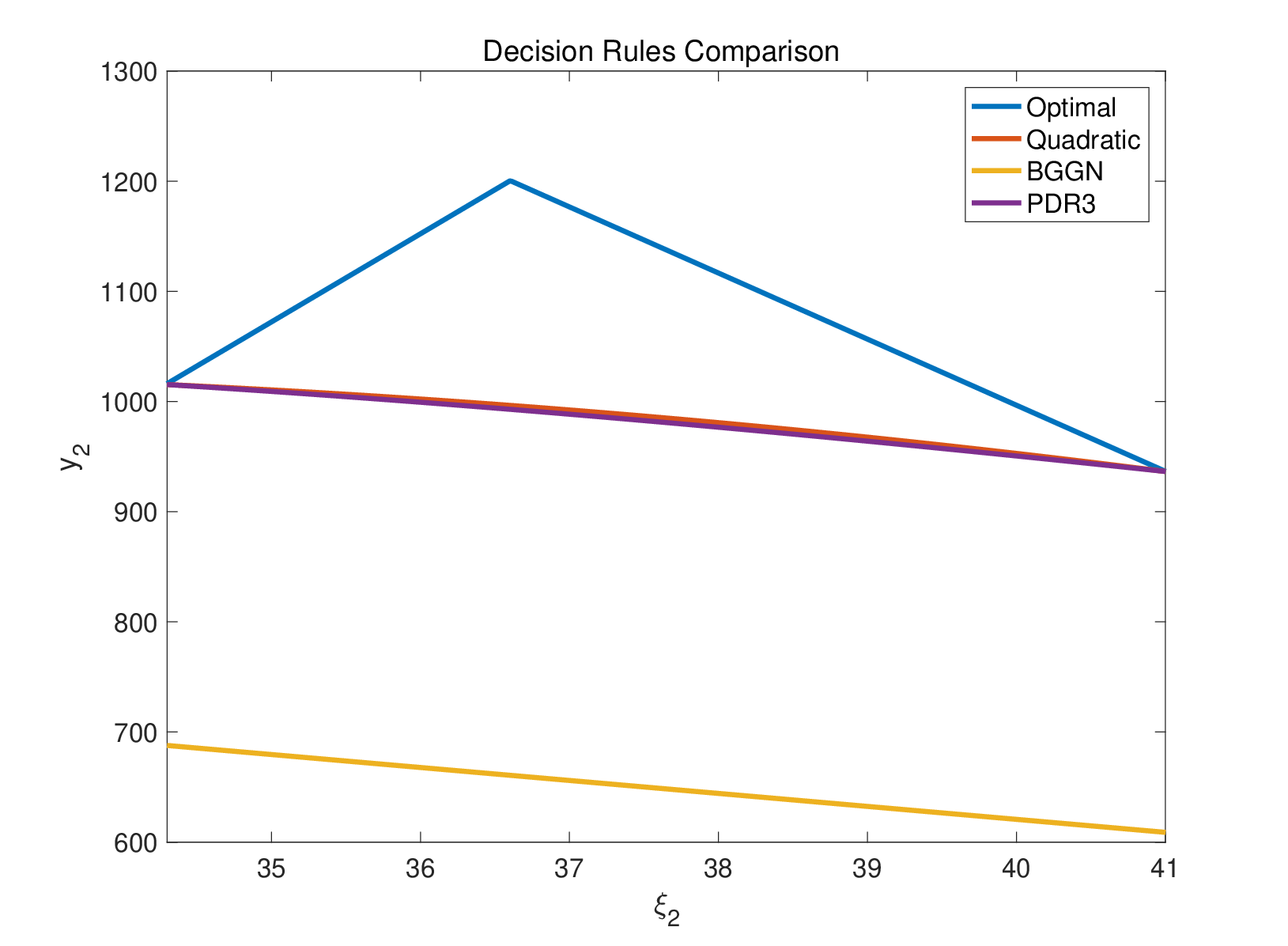}
  \caption{Comparison of the decision rule functions from QDR, BGGN, PDR3, and GWK.}
\label{fig:newsvendor}
\end{figure}

\subsection{Inventory control}

We next consider a multi-stage robust inventory control problem with multiple products 
and backlogging. A stochastic programming version of the problem is described 
in~\cite{Georghiou.Wiesemann.Kuhn.2015}.
In this problem, we must determine sales
and order policies that maximize the worst-case profit over a planning horizon of $T$ time stages. 
At the beginning of each time stage $t$, we observe a vector of risk factors~$\bm \xi_t$ that 
explains the uncertainty in the current demand $D_{t,p}(\bm \xi_t)$ and the unit sales price 
$R_{t,p}(\bm \xi_t)$ of each product $p \in[P]$. After $\bm \xi_t$ is revealed at time 
stage $t$, we must determine the quantity $s_{t,p}$ of product $p$ to sell at the current price, 
the amount $o_{t,p}$ of product $p$ to replenish the inventory, and the amount $b_{t,p}$ of
product $p$ to backlog to the next time stage at the unit cost $C_b$. 
The sales~$s_{t,p}$ of product $p$ at time stage $t$ can only be provided by orders placed at 
time stage $t-1$ or earlier. We denote the inventory level at the beginning of each time stage
$t$ by $I_t$. For simplicity, we assume that one unit of each product occupies the same 
amount of space and incurs periodically the same inventory holding costs $C_h$. 
The inventory level is required to remain nonnegative and is not allowed to exceed the capacity 
limit $\bar I$ throughout the planning time horizon. The inventory control problem can be 
stated as the MSRO problem
\begin{equation} \label{equ:inventory}
\begin{array}{ll}
\max &  \min\limits_{\bm \xi \in \Xi} \, \sum\limits_{t=1}^T \sum\limits_{p=1}^P \left[ R_{t,p}(\bm \xi_t)s_{t,p}(\bm \xi^t) - C_bb_{t,p}(\bm \xi^t) - C_h I_{t,p}(\bm \xi^t) \right] \\
\st    & I_{1,p}(\bm \xi^1) = I_{0, p}  - s_{1,p}(\bm \xi^1), \  b_{1,p}(\bm \xi^1) = D_{1,p}(\bm \xi_1) - s_{1,p}(\bm \xi^1) \ \ \  \forall \, \bm \xi \in \Xi, \ \forall \, p \in [P]  \\
        & I_{t,p}(\bm \xi^t) = I_{t-1,p}(\bm \xi^{t-1}) + o_{t,p}(\bm \xi^{t-1}) - s_{t,p}(\bm \xi^t) \ \ \  \forall \, \bm \xi \in \Xi, \, \forall \, p \in [P], \, \forall \, t \in [T]\backslash \{1\}   \\
        & b_{t,p}(\bm \xi^t) = b_{t-1,p}(\bm \xi^{t-1}) + D_{t,p}(\bm \xi_t) - s_{t,p}(\bm \xi^t) \ \ \ \forall \, \bm \xi \in \Xi, \, \forall \, p \in [P],  \, \forall \, t \in [T]\backslash \{1\}  \\
        & o_{t,p}(\bm \xi^t), s_{t,p}(\bm \xi^t), b_{t,p}(\bm \xi^t), I_{t,p}(\bm \xi^t) \geq 0, I_{t,p}(\bm \xi^t) \leq \bar I \ \ \  \forall \, \bm \xi \in \Xi, \, \forall \, p \in [P], \, \forall \,  t \in [T],
\end{array}
\end{equation} 
where $I_{0,p}$ are fixed to pre-specified quantities for all $p \in [P]$. The product prices are defined as 
\[
R_{t,p}(\bm \xi_t) = 4 + \alpha_{1,p}\xi_{t,1} + \alpha_{2,p}\xi_{t,2} + \alpha_{3,p}\xi_{t,3} + \alpha_{4,p} \xi_{t,4} 
\]
with factor loadings $\alpha_{1,p}, \alpha_{2,p}, \alpha_{3,p}, \alpha_{4,p} \in [-1,1]$. Similarly, we set the demands to 
\[
D_{t,p}(\bm \xi_t) = 2 + \sin(\frac{2\pi(t-1)}{12}) + \frac12[ \beta_{1,p}\xi_{t,1} + \beta_{2,p}\xi_{t,2} + \beta_{3,p}\xi_{t,3} + \beta_{4,p} \xi_{t,4} ]
\]
for $p = 1, \ldots, P/2$ and
\[
D_{t,p}(\bm \xi_t) = 2 + \cos(\frac{2\pi(t-1)}{12}) + \frac12[ \beta_{1,p}\xi_{t,1} + \beta_{2,p}\xi_{t,2} + \beta_{3,p}\xi_{t,3} + \beta_{4,p} \xi_{t,4} ]
\]
for $p = 1/P + 1, \ldots, P$ with factor loadings $\beta_{1,p}, \beta_{2,p}, \beta_{3,p}, \beta_{4,p} \in [-1,1]$. 
The sine (cosine) terms in the above expression correspond to the stylized fact that
the expected demands of the first (last) P/2 products are high in spring (winter) and low in fall (summer). 
We assume that the vectors of risk factors $\bm \xi_t \in \RR^4$ for all $t = 1, \ldots, T$, are serially
independent and uniformly distributed on~$[-1,1]^4$. Formally, the uncertainty set is defined as
\[
\Xi := \left\{ (\bm \xi := \bm \xi_1, \ldots, \bm \xi_t) : \|\bm \xi_t\|_{\infty} \leq 1 \ \forall \, t \in [T]\right \}. 
\] 
The associated cone $\K$ is written as follows:
\[
\K := \left\{ (\bm \xi, \tau) \in \RR^{4T}\times \RR_+ : \|\bm \xi_t\|_{\infty} \leq \tau \ \forall \, t \in [T]\right \}. 
\] 

In all numerical experiments, we generate $25$ random instances of the inventory control problem 
with $P = 4$ products. We  utilize the  mechanism in~\cite{Georghiou.Wiesemann.Kuhn.2015} to set up the parameters and to generate the random instances.
We  set backlogging and inventory holding costs  identically to 
$C_b = C_h = 0.2$. We further set the initial inventory level to $I_{0,p} = 0$ and the inventory capacity 
to $\bar I = 24$. We sample the factor loadings $\alpha_{1,p}, \alpha_{2,p}, \alpha_{3,p}, \alpha_{4,p}$
and $\beta_{1,p}, \beta_{2,p}, \beta_{3,p}, \beta_{4,p}$ uniformly from the interval~$[-1,1]$.
As problem \eqref{equ:inventory} has non-fixed recourse, we  employ  linear decision rules, and further enhance them by applying the 
piecewise scheme discussed in Section~\ref{sec:enhanced}, where the folding directions are described by the standard basis vectors 
$\mathbf e_\ell$, $\ell\in[4]$. This gives  rise to  a semidefinite approximation 
which results from replacing the copositive cone $\COP(\K)$ in the equivalent copositive program with the inner approximation 
$\IA(\K)$ defined in (\ref{equ:innercone}). We compare our scheme~(PLDR) with  the one proposed by Ben-Tal et al.~(BGGN)~where 
we replace the cone $\COP(\K)$  with the inner approximation $\AS(\K)$ defined in~(\ref{equ:ascone}), 
and with polynomial decision rule scheme of degree~$3$ (PDR3). 

%We consider $9$ cases corresponding 
%to the number of time stages in the experiment. In particular, we set $T = 1, 3, 6, 9, 12, 15, 18, 21$ 
%and $24$. 
We test the different schemes on  problem instances with planning horizons $T = 1$, $3$,  $6$,  $9$, $12$, $15$, $18$,  $21$, and $24$. 
 Table \ref{tab:inventory_value} reports the relative gaps between the 
optimal values of PLDR and those of the other two schemes, while Table \ref{tab:inventory_time} shows the average computation times for 
the three approximation schemes. Note that PDR3 can only solve instances up to  $T = 3$ before it starts experiencing  numerical  issues. %, thereby %Therefore 
%we only report the  results of $T=1$ and $T=3$.
As illustrated in~Table \ref{tab:inventory_value}, %our approach can provide a substantial improvement over BGGN.
the relative gap between PLDR
and BGGN increases dramatically with the planning horizon, where the largest average improvement of $191.2\%$ is observed for $T=24$. 
%Surprisingly, PLDR has an average relative gap up to $191.2\%$ over BGGN 
%for the case of $T=24$.
Meanwhile, PLDR can generate the same results as PDR3 
in the case of $T=1$, and remain very close to PDR3 for
$T=3$.   As illustrated in  these tables, our proposed copositive scheme
can return solutions that are of very high quality without sacrificing much computational effort.  
%with scarifying some computation efficiency. 
%On the other hand, PLDR
%can return comparable solutions to PDR3 with much more favorable scalability. 

\begin{table}[tbp]
\centering
\begin{tabular}{c|c|ccccccccc}  % repeats {c|} 4 times
 \multicolumn{1}{c}{}& \multicolumn{1}{c}{}&  \multicolumn{9}{c}{Number of time stages} \\ \hline
Method &Statistic &  1 & 3 & 6 & 9 & 12 & 15 & 18 & 21 & 24    \\ \hline
%PLDR & $20$th percentile &  &  &  &  & & & & & & \\ 
%   & Mean &  &  &  &  & & & & & & \\ 
%   & $80$th percentile &  &  &  &  & & & & & &  \\ \hline
BGGN & $10$th prct. & 3.5  & 9.8 & 1.7  & 18.8 & 8.5 & 4.6 & 24.8 & 23.5 & 6.6   \\ 
   & Mean &17.3  &  21.0 & 20.8  & 42.7   & 47.9  &  43.7 & 99.2 & 129.3  & 191.2   \\ 
   & $90$th prct. & 39.2  & 38.3  & 36.8   & 70.6   & 100.5 & 94.6  & 154.9  & 225.4  & 762.7   \\ \hline
PDR3 & $10$th prct. & 0 & 0 & - & - & - & - & - & - & -  \\ 
   & Mean & 0  & -0.1 & -  & -  & - & - & - & - & -  \\ 
   & $90$th prct. & 0 & -0.2 & - & - & - & - & - & - & -   \\ 
\hline\hline
\end{tabular}
\caption{Relative gaps (in percent) between the alternative approximation schemes and PLDR}
\label{tab:inventory_value}
\end{table}

\begin{table}[tbp]
\centering
\begin{tabular}{c|ccccccccc}  % repeats {c|} 4 times
 \multicolumn{1}{c}{}& \multicolumn{9}{c}{Number of time stages} \\ \hline
Method & 1  & 3 & 6 & 9 & 12 & 15 & 18 & 21 & 24    \\ \hline
PLDR & 0.02 & 0.29 & 2.31 & 9.66  & 34.60 & 99.29 & 248.40 & 541.75 & 1050.91 \\ 
BGGN & 0.01 & 0.04 &  0.33 & 1.23  & 4.76 & 14.48 & 36.85 & 94.49 & 191.30   \\  
PDR3 & 0.13 & 28.17 & - & - & - & - & - & - & -  \\ 
\hline\hline
\end{tabular}
\caption{The average computation times (in seconds) of the different approximation schemes}
\label{tab:inventory_time}
\end{table}

We also assess the quality of the decision rules obtained from the different approximation methods by evaluating their true worst-case profits. Since the inventory control problem has non-fixed recourse, the worst-case profit of a fixed decision does not necessarily correspond to an extreme point scenario of the uncertainty set $\Xi$. As a result, we design a simulation procedure to estimate the worst-case profit. Specifically, we randomly generate 10,000 samples from the uncertainty set, where each sample point corresponds to a trajectory realization of the demands and prices over the $T$ periods. The approximate worst-case profit is then given by the sample point that generates the smallest profit. 
% We also assess the quality of the decisions rules obtained from the different approximation methods by evaluating their worst-case profits in a simulation of the real world. Since the inventory control problem  has non-fixed recourse, the worst-case profit is not necessary corresponding to a scenario that is an extreme point of the uncertainty set $\Xi$. As a result, we design a simulation procedure to approximate the worst-case-scenario profit. In particular, we randomly generate $10,000$ samples from the uncertainty set with each sample being a path of the realizations of the uncertainty over the $T$ time periods. For the decision rule from each approximation method, we apply it over all $10,000$ samples and calculate the corresponding profits. The minimum profit is the worst-case-scenario profit for the decision rule. 
Table~\ref{tab:inventory_sim} reports the statistics of relative gaps between the worst-case-scenario profit of our method and BGGN and PDR3.  We find that the proposed PLDR scheme still provides substantial average improvements over BGGN method.

\begin{table}[tbp]
\centering
\begin{tabular}{c|c|ccccccccc}  % repeats {c|} 4 times
 \multicolumn{1}{c}{}& \multicolumn{1}{c}{}&  \multicolumn{9}{c}{Number of time stages} \\ \hline
Method &Statistic &  1 & 3 & 6 & 9 & 12 & 15 & 18 & 21 & 24    \\ \hline
%PLDR & $20$th percentile &  &  &  &  & & & & & & \\ 
%   & Mean &  &  &  &  & & & & & & \\ 
%   & $80$th percentile &  &  &  &  & & & & & &  \\ \hline
BGGN & $10$th prct. & 1.7  & 6.9 & 1.5  & 12.8 & 6.9 & 3.7 & 22.3 & 19.4 & 6.1   \\ 
   & Mean & 15.3  &  19.6 & 17.9  & 35.9   & 42.8  &  41.6 & 79.2 & 119.9  & 181.7   \\ 
   & $90$th prct. & 34.7  & 36.1  & 34.5   & 61.7   & 89.1 & 90.3  & 132.8  & 187.8  & 692.4   \\ \hline
PDR3 & $10$th prct. & 0 & 0 & - & - & - & - & - & - & -  \\ 
   & Mean & 0  & -0.1 & -  & -  & - & - & - & - & -  \\ 
   & $90$th prct. & 0 & -0.1 & - & - & - & - & - & - & -   \\ 
\hline\hline
\end{tabular}
\caption{Relative gaps (in percent) of the worst-case-scenario profits between the alternative approximation schemes and PLDR}
\label{tab:inventory_sim}
\end{table}

\subsection{Index tracking}

For the last example, we study a dynamic index tracking problem, which aims at matching the performance of a 
stock index as closely as possible with a portfolio of other financial instruments over a finite discrete planning horizon
$T$. A stochastic programming version of the problem is described in~\cite{Hanasusanto.Kuhn.2013}. 
To this end, we consider five stock indices, where  the first  four constitute the tracking instruments while 
the last one corresponds to the target index. 
Let $\bm \xi \in \RR^5_+$ be the vector of  total  returns (price relatives) of these indices from time stage $t-1$ to 
time stage $t$.  Here,  $\xi_{t,1}$,  $\xi_{t,2}$,  $\xi_{t,3}$,  and  $\xi_{t,4}$ are  returns of the four tracking 
instruments, while $\xi_{t,5}$ is return of the target index at time stage $t$. The robust dynamic index tracking problem is
stated as follows:
\begin{equation} \label{equ:index}
\begin{array}{ll}
\min &  \max\limits_{\bm \xi \in \Xi} \, \sum\limits_{t=1}^T |\xi_{t,5} - s_t(\bm \xi^t)| \\
\st    & \bm x_0 \geq 0, \ \bm{e}^\top \bm x_0 \leq 1, \ s_1(\bm \xi^1) = \bm \xi_1^\top \bm x_0  \\
& s_t(\bm \xi^t) = \bm \xi_t^\top \bm x_{t-1}(\bm \xi^{t-1}) \ \ \ \forall \, t \in [T]\backslash \{1\}   \\
& \bm{e}^\top \bm x_{t}(\bm \xi^{t}) \leq s_{t}(\bm \xi^{t}), \ \bm x_t(\bm \xi^t) \geq 0  \ \ \ \forall \, t \in [T],
\end{array}
\end{equation} 
%r%epresent the return of S\&P 500 at time stage $t$. 
The decision variable $s_t(\bm \xi^t) \in \RR_+$ determines the value of the  tracking portfolio %relative 
%to the value of S\&P 500 
at time stage $t$.  Here, we aim to rebalance the portfolio allocation vector $\bm x(\bm \xi^t) \in \RR^4$ 
of the four tracking instruments such that $s_t(\bm \xi^t)$ is as close to $\xi_{t,5}$ as possible throughout the planning 
time horizon. 
 The  uncertainty set $\Xi$ in \eqref{equ:index} is specified through a factor model as follows:
\[
\Xi := \left\{ \bm \xi := (\bm \xi_1, \ldots, \bm \xi_T) :
 \begin{array}{l} 
\bm \xi_t = \bm f + \bm F \bm \zeta_t , \ \bm \zeta_t \in \RR^3 \ \ \ \forall \, t \in [T] \\ 
\|\bm \zeta_t\|_\infty \leq 1, \ \|\bm \zeta_t\|_1 \leq \rho \ \ \ \forall \, t \in [T]\end{array} \right\} 
\]
The associated cone $\K$ is accordingly written as
\[
\K := \left\{ \bm \xi, \tau) \in \RR^{4T} \times \RR_+ :
 \begin{array}{l} 
\bm \xi_t = \bm f \tau + \bm F \bm \zeta_t , \ \bm \zeta_t \in \RR^3 \ \ \ \forall \, t \in [T] \\ 
\|\bm \zeta_t\|_\infty \leq \tau, \ \|\bm \zeta_t\|_1 \leq \rho\tau \ \ \ \forall \, t \in [T]\end{array} \right\}. 
\]

Since the objective function of (\ref{equ:index}) is not linear, we introduce auxiliary 
variables $w_t(\cdot)$ to linearize each absolute term. This yields the multi-stage robust linear optimization problem
\begin{equation} \label{equ:index_lp}
\begin{array}{ll}
\min &  \max\limits_{\bm \xi \in \Xi} \, \sum\limits_{t=1}^T w_t(\bm \xi^t) \\
\st    & \bm x_0 \geq 0, \ \bm{e}^\top \bm x_0 \leq 1, \ s_1(\bm \xi^1) = \bm \xi_1^\top \bm x_0  \\
        & s_t(\bm \xi^t) = \bm \xi_t^\top \bm x_{t-1}(\bm \xi^{t-1}) \ \ \ \forall \, t \in [T]\backslash \{1\}   \\
        & \bm{e}^\top \bm x_{t}(\bm \xi^{t}) \leq s_{t}(\bm \xi^{t}), \ \bm x_t(\bm \xi^t) \geq 0  \ \ \ \forall \, t \in [T] \\
       & w_t(\bm \xi^t) \geq \xi_{t,5} - s_t(\bm \xi^t), \ w_t(\bm \xi^t) \geq s_t(\bm \xi^t) -\xi_{t,5}  \ \ \ \forall \, t \in [T]. 
\end{array}
\end{equation} 
%Omitting the details, we recast problem in (\ref{equ:index_lp}) to the formulation of the multi-stage robust optimization 
%problem in (\ref{equ:msro}). 
As problem (\ref{equ:index_lp}) has non-fixed recourse, we apply linear decision rules to the decision variables 
$\bm x_{t}(\cdot)$, $t\in[T]$, which are multiplied with some uncertain parameters. On the other hand, we may utilize 
quadratic decision rules on 
$s_t(\cdot)$ and $w_t(\cdot)$, $t\in[T]$, as they are not multipled with any uncertain parameters.
With minimum modification, the copositive approach introduced in Section \ref{sec:multi-stage} 
can be applied and, accordingly, we can solve the semidefinite approximation 
which results from replacing the copositive cone $\COP(\K)$ with the inner approximation 
$\IA(\K)$ defined in (\ref{equ:innercone}). We denote our approach by LQDR.
We compare LQDR with the scheme proposed by Ben-Tal et al.~(BGGN)~where 
we replace the cone $\COP(\K)$  with the inner approximation $\AS(\K)$ defined in~(\ref{equ:ascone}), 
and with polynomial decision rule scheme of degree~$3$ (PDR3). 

All experimental results  are averaged over 25 randomly generated instances. 
We utilize the mechanism in~\cite{Hanasusanto.Kuhn.2013} to set up the parameters and to generate the random instances.
For each instance, 
$\bm f$ is set to the vector of all ones, while each entry of  $\bm F$ is sampled uniformly from the interval~$[-1,1]$. 
We further normalize each row of
$\bm F$  such that the sum of the absolute values in each row equals to 1. We test the different schemes on 
problem instances with planning horizons $T = 1$, $3$,  $6$,  $9$, $12$, $15$, and $18$. 
Note that PDR3 can only solve instances up to 
 $T=3$. 
Table~\ref{tab:index_value} reports the statistics of relative gaps between the 
optimal values obtained from LQDR and those  from the  two alternative 
approximation schemes, while Table~\ref{tab:index_time} shows the average computation times for all  three approximation
schemes.  As indicated in Table~\ref{tab:index_value}, the relative gap between LQDR
and BGGN increases  with the planning horizon, where the largest average improvement of $18.2\%$ is observed for $T=18$. 
On the other hand, LQDR generates similar performance to PLDR3 but with significantly less computational effort.

\begin{table}[tbp]
\centering
\begin{tabular}{c|c|ccccccccc}  % repeats {c|} 4 times
\multicolumn{1}{c}{}& \multicolumn{1}{c}{}&  \multicolumn{7}{c}{Number of time stages} \\ \hline
Method &Statistic &  1 & 3 & 6 & 9 & 12 & 15 & 18    \\ \hline
BGGN & $10$th prct. & 0.0  & 1.0 & 1.9  & 2.2 & 4.7 & 1.8 & 2.5    \\ 
   & Mean                 &0.0  &  7.1 & 12.5  & 11.8   & 14.2  &  17.0 & 18.2   \\ 
   & $90$th prct.      & 0.0  & 21.7  & 29.4   & 29.0   & 33.8 & 30.1  & 34.2     \\ \hline
PDR3 & $10$th prct. & 0.0 & 0.0 & - & - & - & - & -   \\ 
                                     & Mean & 0.0  & -0.1 & -  & -  & - & - & -   \\ 
   & $90$th prct.         & 0.0 & -0.4 & - & - & - & - & -    \\ 
\hline\hline
\end{tabular}
\caption{Relative gaps (in percent) between the alternative approximation schemes and LQDR}
\label{tab:index_value}
\end{table}

\begin{table}[tbp]
\centering
\begin{tabular}{c|ccccccccc}  % repeats {c|} 4 times
\multicolumn{1}{c}{}& \multicolumn{7}{c}{Number of time stages} \\ \hline
Method & 1  & 3 & 6 & 9 & 12 & 15 & 18     \\ \hline
LQDR & 0.03 & 0.40 & 5.01& 32.95 & 127.34  & 601.57 & 1703.32  \\ 
BGGN & 0.02 & 0.08 &  0.69 & 5.47 & 24.70 & 75.48 & 226.18   \\  
PDR3 & 0.09 & 8.50   & - & - & - & - & -   \\ 
\hline\hline
\end{tabular}
\caption{The average computation times (in seconds) of the different approximation schemes}
\label{tab:index_time}
\end{table}

We also assess the quality of the decision rules obtained from the different approximation methods by evaluating their worst-case risks. As in the inventory control problem, the index tracking problem has a non-fixed recourse. We adopt the same simulation procedure  by using 10,000 sample trajectories from the uncertainty set to estimate the worst-case risks. 
%We also assess the quality of the decisions rules obtained from the different approximation methods by evaluating their worst-case risk in a simulation of the real world. Since the index tracking problem has non-fixed recourse, the worst-case risk is not necessary corresponding to a scenario that is an extreme point of the uncertainty set $\Xi$. As a result, we design a simulation procedure to approximate the worst-case-scenario risk. In particular, we randomly generate $10,000$ samples from the uncertainty set with each sample being a path of the realizations of the uncertainty over the $T$ time periods. For the decision rule from each approximation method, we apply it over all $10,000$ samples and calculate the corresponding risks. The maximum risk is the worst-case-scenario risk for the decision rule. 
Table~\ref{tab:index_sim} reports the statistics of relative gaps between the worst-case-scenario risk of our method and BGGN and PDR3.  We find that the proposed PLDR scheme provides substantial average improvements over BGGN method.

\begin{table}[tbp]
\centering
\begin{tabular}{c|c|ccccccccc}  % repeats {c|} 4 times
\multicolumn{1}{c}{}& \multicolumn{1}{c}{}&  \multicolumn{7}{c}{Number of time stages} \\ \hline
Method &Statistic &  1 & 3 & 6 & 9 & 12 & 15 & 18    \\ \hline
BGGN & $10$th prct. & 0.0  & 0.1 & 1.2  & 1.6 & 4.2 & 1.4 & 1.9    \\ 
   & Mean                 &0.0  &  5.2 & 10.4  & 9.7   & 12.1  &  13.7 & 14.1   \\ 
   & $90$th prct.      & 0.0  & 18.5  & 24.2   & 25.2   & 31.0 & 27.4  & 29.2     \\ \hline
PDR3 & $10$th prct. & 0.0 & 0.0 & - & - & - & - & -   \\ 
                                     & Mean & 0.0  & -0.0 & -  & -  & - & - & -   \\ 
   & $90$th prct.         & 0.0 & -0.3 & - & - & - & - & -    \\ 
\hline\hline
\end{tabular}
\caption{Relative gaps (in percent) of the simulated worst-case-scenario risks between the alternative approximation schemes and LQDR}
\label{tab:index_sim}
\end{table}

\section{Concluding remarks}
Generic MSRO problems (with non-fixed recourse) have so far resisted strong decision rule approximations. In this paper, 
we leveraged modern conic programming techniques to derive an exact convex copositive program for  the linear  decision 
rule approximation of these difficult optimization problems. We further  derived an equivalent copositive program for the more 
powerful quadratic decision rule approximation of   instances  with fixed recourse. These  reformulations enabled us to obtain 
a new semidefinite approximation  that is provably tighter than an  existing scheme of similar complexity by Ben-Tal et al.  
The copositive approach further inspired us to develop a new piecewise decision rule   scheme for the generic problems. 
For MSRO problems with non-fixed recourse, we proved that the resulting approximation is tighter than the state-of-the-art 
scheme by Georghiou et al. Extensive numerical results demonstrate that our scheme can  substantially outperform existing 
schemes in terms of optimality, while maintaining  scalability when solving large problem instances. We conclude that our proposed copositive approach provides an excellent balance between optimality and scalability.

We mention two promising directions for further research. First, it would be interesting to derive a copositive programming 
reformulation for the piecewise decision rule scheme where we simultaneously optimize for the best folding directions and 
breakpoints. Second, it is imperative to design
a global solution approach for MSRO problems with non-fixed recourse that leverages the proposed decision rule schemes. 

{
	\paragraph{Acknowledgements.}  Grani A.~Hanasusanto 
	is supported by the National Science Foundation grant no.~1752125.
	The work presented in this paper was carried out while Guanglin Xu 
	was a postdoctoral fellow at the Institute for Mathematics and its Applications during the IMA's annual 
	program on {\em Modeling, Stochastic Control, Optimization, and Related Applications}.
}

\section*{Appendix}

\subsection*{Proof of Lemma \ref{lem:zero}}
\begin{proof}
Fix $(\bm z, \tau) \in \K$. 
For the sake of contradiction, suppose that $\tau =0$ but $\bm z \neq 0$. By Assumption~\ref{ass:bounded}, the set 
$\Uzero$ is nonempty. Choose any $\uu \in \Uzero$, so that $\uu \in \K$ and 
$u_{K+1} = 1$. Then, for any non-negative scalar $\rho \geq 0$, we have $\bm w(\rho) := \uu + \rho (\bm z^\top, 0)^\top \in \K$. Furthermore, $\bm w(\rho) \in \Uzero$ as
$[\bm w(\rho)]_{K+1} = 1$. Since $\rho$ can be arbitrarily large while $\bm z \neq 0$, 
we conclude that $\Uzero$ is unbounded, contradicting the compactness condition of Assumption~\ref{ass:bounded}. Thus, the claim follows. 
\end{proof}

\subsection*{Proof of Lemma \ref{lem:interior}}
\begin{proof}
We prove the statement by showing that the dual problem (\ref{equ:qp-cop}) 
admits a Slater point. To this end, we set $\alpha_i = 0$, $i \in [I]$.
We then seek for a scalar $\lambda$  that ensures 
\begin{equation*}
(\bm z^\top, \tau)  \left(\lambda\mathbf{e}_{K+1}\mathbf{e}_{K+1}^\top - \bm{\widehat C}_0\right) (\bm z^\top, \tau)^\top =\lambda \tau^2 - (\bm z^\top, \tau) \bm{\widehat C}_0 (\bm z^\top, \tau)^\top>0
\end{equation*}
for all non-zero vector $(\bm z, \tau)$ in $\K$. 
By Lemma~\ref{lem:zero}, it suffices to consider the case where $\tau>0$, in which case we may divide the expression by $\tau^2$. We thus require that 
 $\lambda - ((\bm z / \tau)^\top, 1) \bm{\widehat C}_0 ((\bm z / \tau)^\top, 1)^\top$
is strictly positive for all $(\bm z, \tau) \in \K$, $\tau>0$. Since $(\bm z, \tau) \in \K$, we have that $(\bm z/\tau, 1) \in \K$, and, by construction,   $(\bm z/\tau, 1)\in \Uzero$. In this case, the boundedness of $\Uzero$ implies that there exists a constant $\lambda^\star$ such that $\lambda^\star > ((\bm z / \tau)^\top, 1) \bm{\widehat C}_0 ((\bm z / \tau)^\top, 1)^\top$ for all $(\bm z/\tau, 1)\in \Uzero$. The claim thus follows since the point $(\lambda,\bm\alpha)=(\lambda^\star,\bm 0)$ constitutes a Slater point for the problem \eqref{equ:qp-cop}.   
\end{proof}

\subsection*{Proof of Theorem \ref{the:ldr-ref}}
\begin{proof}
Using Lemmas~\ref{lem:burer}
and~\ref{lem:interior}, we can reformulate the maximization problem in the objective function of 
(\ref{equ:ldr}) as a copositive minimization problem. To this end, for any fixed 
decision rule coefficients $\bm Y\in\RR^{N\times (K+1)}$, 
we consider the maximization problem given by
\begin{equation} \label{equ:ldr-obj} 
\sup \limits_{\uu \in \U}  \, (\bm{\widehat D} \uu)^{\top} \bm{Y}\uu.
\end{equation}
By Lemma~\ref{lem:burer}, the problem can be reformulated as a
linear program over the cone of completely positive matrices with respect to $\K$, as follows:
\begin{equation} \label{equ:ldr-epicon-ref-dual}
\begin{array}{ll}
\sup & \dfrac12 \left( \bm{\widehat D}^\top\bm{Y} + \bm{Y}^\top \bm{\widehat D} \right) \bullet \bm{U} \\
\st    &  \mathbf{e}_{K+1}\mathbf{e}_{K+1}^\top \bullet \bm{U} = 1 \\
        & \bm {\widehat C}_i \bullet \bm U = 0 \ \ \ \forall \, i \in [I] \\
        & \bm U \in \SYM^{K+1}, \  \bm{U} \in \CP(\K)
\end{array}
\end{equation}
Letting $\lambda$ and $\bm \alpha$ be the dual variables corresponding to
the constraints $\mathbf{e}_{K+1}\mathbf{e}_{K+1}^\top \bullet \bm{U} = 1$ 
and $\bm {\widehat C}_i \bullet \bm U = 0$, $i \in [I]$, respectively, the dual problem is 
written as:
\begin{equation} \label{equ:ldr-epicon-ref}
\begin{array}{ll}
\inf & \lambda   \\
\st  & \lambda\, \mathbf{e}_{K+1}\mathbf{e}_{K+1}^\top -  \dfrac12 \left( \bm{\widehat D}^\top\bm{Y} + \bm{Y}^\top \bm{\widehat D} \right) + \sum\limits_{i=1}^I \alpha_i \bm{\widehat C}_i \in \COP(\K) \\
& \lambda \in \RR, \ \bm \alpha \in \RR^I.  
\end{array}
\end{equation}
In view of Lemma~\ref{lem:interior}, strong duality holds for the primal and dual pair,
%between problems (\ref{equ:ldr-epicon-ref-dual}) and (\ref{equ:ldr-epicon-ref}). 
i.e., the optimal value of problem (\ref{equ:ldr-obj})
coincides with that of problem (\ref{equ:ldr-epicon-ref}). Replacing the
maximization problem in (\ref{equ:ldr}) with the minimization problem in \eqref{equ:ldr-epicon-ref} yields the objective function 
and the first constraint in (\ref{equ:cop_ldr}). 

Next, using standard techniques from robust optimization, we
reformulate the semi-infinite constraints in \eqref{equ:ldr} into a finite constraint system. By substituting the definition of problem parameters
$\A(\uu)$, $\BB(\uu)$, and $\hh(\uu)$, and using the definitions in \eqref{equ:ABmat}, 
we can simplify the semi-infinite constraints in \eqref{equ:ldr} to the constraints
\begin{equation*} \label{equ:ldr-sem}
\uu^\top\bm{\widehat \Theta}_j \x + \uu^\top \bm{\widehat \Lambda}_j \bm{Y}\uu \geq \bm{\widehat h}_j^\top \uu \ \ \ \forall \, \uu \in \U \ \  \forall \, j \in [J],
\end{equation*}
where $\bm{\widehat \Theta}_j$ and $\bm{\widehat \Lambda}_j$ are defined as in
(\ref{equ:ABmat}). For any fixed $(\x, \bm{Y}) \in \RR^M \times \RR^{N \times (K+1)}$, we consider 
the $j$-th constraint separately, which can equivalently be stated as 
\begin{equation} \label{equ:icons}
\inf \limits_{\uu \in \U } \, \left(\uu^\top\bm{\widehat \Theta}_j \x + \uu^\top \bm{\widehat \Lambda}_j \bm{Y}\uu -  \bm{\widehat h}_j^\top \uu \right)  \geq 0.
\end{equation}
By Lemma \ref{lem:burer}, the minimization problem on the left-hand side of
(\ref{equ:icons}) can be reformulated as the following linear program over
the cone of completely positive matrices:
\begin{equation} \label{equ:icons-cp}
\begin{array}{ll}
\inf   &\bm{\Omega}_j(\bm x, \, \bm Y)  \bullet \bm{U}_j \\
\st    & \mathbf{e}_{K+1}\mathbf{e}_{K+1}^\top \bullet \bm{U}_j = 1 \\
       & \bm{\widehat C}_i \bullet \bm U_j = 0 \ \ \ \forall \, i \in [I] \\
       & \bm U_j \in \SYM^{K+1}, \ \bm{U}_j \in \CP(\K).
\end{array}
\end{equation} 
Letting $\pi_j \in \RR$ and $\bm \beta_j \in \RR^I$ be the dual
variables corresponding to the constraints $\mathbf{e}_{K+1}\mathbf{e}_{K+1}^\top \bullet \bm{U}_j = 1$
and $\bm{\widehat C}_i \bullet \bm U_j = 0$, $i \in [I]$, respectively, the dual problem is 
given by
\begin{equation} \label{equ:icons-cpp}
\begin{array}{ll}
\sup& \pi_j \\
\st    & \bm{\Omega}_j(\bm x, \, \bm Y) -  \pi_j \, \mathbf{e}_{K+1}\mathbf{e}_{K+1}^\top - \sum\limits_{i=1}^I [\bm \beta_j]_i \, \bm{\widehat C}_i  \in \COP(\K) \\
&\pi_j \in \RR, \ \bm \beta_j \in \RR^I.  
\end{array}
\end{equation}
If the conditions in Assumption \ref{ass:bounded} hold, then by Lemmas~\ref{lem:burer}
and~\ref{lem:interior}, the optimal value of the left-hand side problem in 
(\ref{equ:icons}) coincides with that of problem (\ref{equ:icons-cpp}). %Thus, we can replace the minimization 
%in~\eqref{equ:icons-cp} with the above maximization. 
The emerging 
constraint is satisfied if and only if there exists $\pi_j \geq 0$ and $\bm \beta_j \in \RR^I$ 
such that
\[
\begin{array}{l}
  \bm{\Omega}_j \left( \bm x, \, \bm Y\right)   - \pi_j \, \mathbf{e}_{K+1}\mathbf{e}_{K+1}^\top - \sum\limits_{i=1}^I [\bm \beta_j]_i\, \bm{\widehat C}_i   \in \COP(\K).
\end{array}
\]
Combining the result for all $J$ constraints yields the finite constraint 
system
\[
 \bm{\Omega}_j\left( \bm x, \, \bm Y\right)   - \pi_j \, \mathbf{e}_{K+1}\mathbf{e}_{K+1}^\top - \sum\limits_{i=1}^I  [\bm \beta_j]_i \, \bm{\widehat C}_i   \in \COP(\K), \ \pi_j \geq 0 \  \ \ \forall \, j \in [J],
\]
which completes the proof. 
\end{proof}

\subsection*{Proof of Theorem \ref{thm:qp}}
\begin{proof}
The proof parallels that of Theorem \ref{the:ldr-ref}. Using Lemma \ref{lem:burer}, 
we reformulate the maximization problem $\sup_{\uu \in \U} \sum_{n=1}^N \widehat d_n \uu^\top \bm{Q}_n \uu$ in the objective function of (\ref{equ:qdr-ref}) into a copositive minimization problem given by
\begin{equation*}
\begin{array}{ll}
\inf & \lambda  \\
\st &  \lambda \, \mathbf{e}_{K+1}\mathbf{e}_{K+1}^\top - \sum \limits_{n=1}^N \widehat d_n \bm{Q}_n - \sum\limits_{i=1}^I  \alpha_i \bm{\widehat C}_i  \in \COP(\K) \\
     & \lambda \in \RR, \ \bm \alpha \in \RR^I. 
\end{array}
\end{equation*}
Then, replacing the maximization problem in (\ref{equ:qdr-ref}) with the above minimization problem yields  the objective function and the first constraint in (\ref{equ:cop_qdr}). 

Next, we can reformulate the  constraint 
\[
 \uu^\top\bm{\widehat \Theta}_j \x +  \sum\limits_{n=1}^N  \widehat b_{j n} \uu^\top\bm{Q}_n\uu \geq \bm{\widehat h}_j^\top \uu \ \ \ \forall \, \uu \in \U
\]
corresponding to the $j$-th semi-infinite constraint in \eqref{equ:qdr-ref} into the equivalent constraints
\begin{equation*}
\begin{array}{l}
 \bm{\Gamma}_j\left( \bm x, \bm Q_1, \ldots, \bm Q_N\right) - \pi_j \, \mathbf{e}_{K+1}\mathbf{e}_{K+1}^\top - \sum\limits_{i=1}^I [\bm \beta_j]_i \, \bm{\widehat C}_i   \in \COP(\K),  \  \pi_j \geq 0. 
\end{array}
\end{equation*}
Combining this result for all $J$ semi-infinite constraints yields the  second constraint system
in (\ref{equ:cop_qdr}). This completes the proof.
\end{proof}

\subsection*{Proof of Lemma \ref{lem:linear_soc}}

\begin{proof}
%5	 and $\mathbf{e}_{K_r}^\top\bm{\widehat R}\uu \geq 0$. 
For any $\uu \in \K$, the second-order cone constraint  $\bm{\widehat R}\uu \in \SOC(K_r)$ in the description of $\K$ stipulates that 
\begin{equation}
\label{eq:SOC_in_K}
\mathbf{e}_{K_r}^\top\bm{\widehat R}\uu \geq \sqrt{(\mathbf{e}_1^\top\bm{\widehat R}\uu)^2 + \cdots + (\mathbf{e}_{K_r-1}^\top\bm{\widehat R}\uu)^2}. 
\end{equation}
Squaring both sides of the inequality yields 
%Then, 
\begin{align*}
%\mathbf{e}_{K_r}^\top\bm{\widehat R}\uu \geq \sqrt{(\mathbf{e}_1^\top\bm{\widehat R}\uu)^2 + \ldots + (\mathbf{e}_{K_r-1}^\top\bm{\widehat R}\uu)^2} & \ \ \ \Longleftrightarrow \\
\uu^\top\bm{\widehat R}^\top \mathbf{e}_{K_r} \mathbf{e}_{K_r}^\top\bm{\widehat R}\uu \geq  \uu^\top\bm{\widehat R}^\top \mathbf{e}_1 \mathbf{e}_1^\top\bm{\widehat R}\uu + \cdots + \uu^\top\bm{\widehat R}^\top \mathbf{e}_{K_r-1} \mathbf{e}_{K_r-1}^\top\bm{\widehat R}\uu & \ \ \ \Longleftrightarrow \\ 
\uu^\top \left( \bm{\widehat R}^\top \mathbf{e}_{K_r} \mathbf{e}_{K_r}^\top\bm{\widehat R}  - \sum\limits_{\ell = 1}^{K_r-1}\bm{\widehat R}^\top \mathbf{e}_\ell \mathbf{e}_\ell^\top\bm{\widehat R}\right)\uu \geq 0 & \ \ \ \Longleftrightarrow \\
\uu^\top \bm{\widehat S}\uu \geq 0 
\end{align*}
Thus, the claim follows. 
\end{proof}

\subsection*{Proof of Proposition \ref{prop:ia-cop}}
\begin{proof}
For any  $\bm V \in \IA(\K)$, we need to show that $ \uu^\top \bm V \uu \geq 0$ for all  $\uu \in \K$.  To this end,  fix any $\bm V \in \IA(\K)$ and $\uu \in \K$. 
By construction, we have 
\begin{align*}
&\ \uu^\top \left( \bm W+\tau \bm{\widehat S} + \bm{\widehat P}^\top\bm{\Sigma}\bm{\widehat P} + \bm{\Psi}\right)\uu \\
= &\  \uu^\top \bm W \uu + \tau \uu^\top \bm{\widehat S}\uu + \uu^\top \bm{\widehat P}^\top\bm{\Sigma}\bm{\widehat P} \uu + \uu^\top \left(\frac12\bm{\widehat P}^\top \bm{\Phi} \bm{\widehat R} +  \frac12\bm{\widehat R}^\top \bm{\Phi}^\top\bm{\widehat P}\right)\uu \\
= & \   \uu^\top \bm W \uu + \tau \uu^\top \bm{\widehat S}\uu + \uu^\top \bm{\widehat P}^\top\bm{\Sigma}\bm{\widehat P} \uu + \uu^\top\bm{\widehat P}^\top \bm{\Phi}\bm{\widehat R}\uu.
\end{align*}
We next analyze each of the four summands separately:
\begin{enumerate}
	\item  Since $\bm W \succeq \bm 0$, we have $\uu^\top\bm W\uu \geq 0$. 
	\item Since $\tau\geq 0$ and by Lemma~\ref{lem:linear_soc}, we have $\tau \uu^\top \bm{\widehat S}\uu \geq 0$. 
	\item Since $\bm{\widehat P} \uu \geq \bm 0$ and $\bm \Sigma \geq \bm 0$, we have  
	$(\bm{\widehat P}\uu)^\top\bm \Sigma (\bm{\widehat P} \uu) \geq 0$.
	\item Since $\bm{\widehat R}\uu$ and the vectors $\textup{Rows}(\bm \Phi)$ belong to $\SOC(K_r)$, we have $\bm{\Phi}\bm{\widehat R}\uu \geq \bm 0$ (as a second-order cone is self-dual). This further implies that  $\uu^\top\bm{\widehat P}^\top \bm{\Phi}\bm{\widehat R}\uu 
	= (\bm{\widehat P}\uu)^\top(\bm{\Phi}\bm{\widehat R}\uu) \geq  0$
	as $\bm{\widehat P}\uu \geq \bm 0$. 
\end{enumerate}
This completes the proof. 
\end{proof}
\subsection*{Proof of Proposition \ref{prop:our_vs_Georghiou}}
\begin{proof}
   For a clear proof, we begin with the case when the piecewise linear lifting is only applied  to the first coordinate axis $u_1$, where the breakpoints are given by
	$h_{1}=\underline u_1<h_{2}<\ldots<h_{L}<\overline u_1$. In this case the lifted set $\U'$ and the set $\U^*$ respectively simplify to
		\begin{equation}
	\label{eq:expanded_Georghiou__}
	\Ubar = \left\{
	(\bm w, \uu) \in \RR^{L} \times  \mathcal U :
	\begin{array}{ll}
	\bm z \in\RR_+^{L+1}  \\  
	w_{\ell} = z_{\ell}-z_{\ell+1}& \ell \in [L] \\
	z_{1}=u_{1}-\underline u_1,\;z_{L+1}=0 & \\
	z_{\ell} \geq u_1-h_{\ell} ,\   \overline{u}_1 \geq z_{\ell} &  \ell \in [L+1]  \\
	z_{\ell}(z_{\ell} - u_1+h_{\ell})= 0 &   \ell \in [L+1]
	\end{array}
	\right\},
	\end{equation}
	\begin{equation}
	\label{eq:Ustar}
	\U^* := \left\{ (\bm w, \uu)\in \RR^{L} \times \U:
	\begin{array}{l}
	\bm z \in \RR_+^{L+1}, \; \bm p \in \RR_+^{L+1},\; \bm r\in\RR_+^{L},\; U\in\RR_+ \\
	\bm Z \in\SYM^{L+1},\bm W\in\SYM^L, \ \bm Q\in\RR^{L\times (L+1)}  \\
	\bm B\bm z=\bm w,\;z_{1}=u_{1}-\underline u_1,\; z_{L+1}=0\\
	\bm z \geq u_1\mathbf e-\bm h,\   \overline{u}_1\mathbf e \geq\bm z \\
	\diag(\bm Z)- \bm p+\bm h\circ\bm z=\bm 0 \\
	\bm H\begin{pmatrix} 
	\bm W & \bm Q & \bm r &\bm  w \\
	\bm Q^\top & \bm Z & \bm p & \bm z\\
	\bm r^\top & \bm p^\top &  U &  u_1 \\  
	\bm w^\top & \bm z^\top &  u_1 & 1\end{pmatrix}\bm H^\top\geq \bm 0,\;\;
	\begin{pmatrix}
	\bm W & \bm Q & \bm r & \bm w \\
	\bm Q^\top & \bm Z & \bm p & \bm z \\
	\bm r^\top & \bm p^\top &  U &  u_1 \\  
	\bm w^\top& \bm z^\top &  u_1 & 1
	\end{pmatrix} \succeq\bm 0
	\end{array}
	\right\},
	\end{equation}
	where the matrices $\bm B\in\RR^{L\times (L+1)}$ and $\bm H\in\RR^{(4L+2)\times(2L+3)}$ are defined as 
	\begin{equation*}
	\bm B=\begin{pmatrix}
	1 &-1 & 0 &\cdots & 0& 0&0\\
	0 & 1 & -1 & \cdots & 0& 0 &0\\
	\vdots & \ddots & \ddots & \ddots & \ddots& \vdots&\vdots\\
	0 & \cdots & \cdots & \cdots & 1 & -1&0\\
	0 & \cdots & \cdots & \cdots & \cdots & 1 & -1
	\end{pmatrix}\qquad\textup{and}
	\qquad
	\bm H=\begin{pmatrix}
	\bm  0&\mathbb I&\bm 0& \bm 0\\
	\bm 0 &\mathbb I& -\mathbf e & \bm h\\
	-\mathbb I &\bm B& \bm 0 & \bm 0 \\
	\mathbb I  &-\bm B&\bm 0 & \bm 0 \\
	\end{pmatrix},
	\end{equation*}
	respectively. One can verify that a point from $\U'$ is also a member of $\U^*$ since the constraints in the definition of $\U^*$ consist of
	a SDP relxation of those from $\U'$; Ssee e.g.,~\cite{Burer.2012}.  
	Furthermore, the outer approximation $\U^{**}$ simplies to:
	\begin{equation}
	\label{eq:Ustarstar}
	{\U}^{**} = \left\{
	(\bm w, \uu) \in \RR^{L}  \times \U :
	\begin{array}{ll}
	\displaystyle u_1-\underline u_1 = \sum_{\ell\in[L]}w_{\ell}&  \\
	h_{2} -\underline u_1\geq w_{1} &  \\
	(h_{\ell+1}-h_{\ell})w_{\ell-1} \geq(h_{\ell}-h_{\ell-1})w_{\ell} & \forall  \ell \in [L]\setminus \{1\}
	\end{array}
	\right\}.
	\end{equation}

	We now establish that $\U^*\subseteq \U^{**}$. 
	First,  the constraints $\bm B\bm z=\bm w$,  $z_{1}=u_{1}-\underline u_1$, and $z_{L+1}=0$ in $\U^*$ imply that
	\begin{equation*}
	\sum_{\ell\in[L]} w_{\ell}= z_{1}-z_{2}+\left[\sum_{\ell\in\{2,\dots,L\}} z_{\ell}-z_{\ell+1}\right]=u_1-\underline u_1. 
	\end{equation*}
	Next, since $z_{2}\geq u_1-h_{2}$, we have that $w_{1}=z_{1}-z_{2}\leq u_{1}-\underline u_1-u_1+h_{2}=h_{2}-\underline u_1$. 
	Thus, the first two constraints in $\U^{**}$ are implied by $\U^*$. It remains to show that the final system of inequalities 
	in~$\U^{**}$ are also implied by the constraints in  $\U^*$. 
	By expanding the matrix product in the penultimate constraint of $\U^*$, we find that $\bm Q=\bm B\bm Z$, 
	and the following constraints hold:
	\begin{equation*}
	\begin{array}{c}
	-\bm r+\bm w\circ\bm h=-\bm B\bm p+(\bm B\bm z)\circ\bm h,\ \ 
	-\bm p\mathbf e^\top+\bm Z+\bm z\bm h^\top\geq\bm 0,\ \ -\bm r\mathbf e^\top+\bm B\bm Z+\bm w\bm h^\top \geq \bm 0. 
	\end{array}
	\end{equation*}
	Next, we perform the substitutions $\bm p=\diag(\bm Z)+\bm h\circ\bm z$, $\bm w=\bm B\bm z$ and  $\bm Q=\bm B\bm Z$ 
	to all occurrences of $\bm p$, $\bm w$, and $\bm Q$, respectively, in the above constraint system. We then  get 
	$\bm r=\bm B(\diag(\bm Z)+\bm h\circ\bm z)$, and by further substituting this value,  we arrive at the equivalent constraint system 
	\begin{equation*}
	\begin{array}{c}
	%\bm B\bm v\geq\bm 0,\   -\bm B\bm p\geq \bm 0,\ \diag(\bm V)+\bm h\circ\bm v\geq\bm 0\\
	-(\diag(\bm Z)+\bm h\circ\bm z)\mathbf e^\top+\bm Z+\bm z\bm h^\top\geq\bm 0,\ -\bm B(\diag(\bm Z)+\bm h\circ\bm z)\mathbf e^\top+\bm B\bm Z+\bm B\bm z\bm h^\top \geq \bm 0. 
	\end{array}
	\end{equation*}
	For $\ell\in\{2,\dots,L\}$, one can show that these constraints further imply the  following system of linear inequalities:
	\begin{equation}
	\label{eq:main_inequalities}
	\begin{array}{rl}
	\mathbf e_{\ell-1}\mathbf e_{\ell+1}^\top\bullet \bm Z-\mathbf e_{\ell+1}\mathbf e_{\ell+1}^\top\bullet \bm Z+ z_{\ell+1}(h_{\ell-1}-h_{\ell+1})\geq 0\;\;\\[1mm]
	\mathbf e_{\ell}\mathbf e_{\ell+1}^\top\bullet\bm Z-\mathbf e_{\ell+1}\mathbf e_{\ell+1}^\top\bullet\bm Z+ z_{\ell+1}(h_{\ell}-h_{\ell+1})\geq 0\;\;\\[1mm]
	\mathbf e_{\ell-1}\mathbf e_{\ell+1}^\top\bullet\bm Z+\mathbf e_{\ell}\mathbf e_{\ell}^\top\bullet\bm Z-\mathbf e_{\ell-1}\mathbf e_{\ell-1}^\top\bullet\bm Z-\mathbf e_{\ell}\mathbf e_{\ell+1}^\top\bullet\bm Z+z_{\ell-1}(h_{\ell+1}-h_{\ell-1})+z_{\ell}(h_{\ell}-h_{\ell+1})\geq 0\;\;\\[1mm]
	\mathbf e_{\ell-1}\mathbf e_{\ell}^\top\bullet\bm Z+\mathbf e_{\ell+1}\mathbf e_{\ell+1}^\top\bullet\bm Z-\mathbf e_{\ell}\mathbf e_{\ell}^\top\bullet\bm Z-\mathbf e_{\ell-1}\mathbf e_{\ell+1}^\top\bullet\bm Z+z_{\ell+1}( h_{\ell+1}-h_{\ell-1})+z_{\ell}(h_{\ell-1}-h_{\ell})\geq 0\;\;\\[1mm]
	\mathbf e_{\ell}\mathbf e_{\ell+1}^\top\bullet\bm Z-\mathbf e_{\ell}\mathbf e_{\ell}^\top\bullet\bm Z+z_{\ell}(h_{\ell+1}-h_{\ell})\geq 0.\\[1mm]
	\end{array}
	\end{equation}
We further  relax the large semidefinite constraint in $\U^{*}$ into $\mathcal O(L)$ semidefinite constraints involving $3\times 3$ matrices, as follows:
	\begin{equation}
	\label{eq:relaxed_k_semidefinite}
	\bm M_{\ell}:=\begin{pmatrix}
	\mathbf e_{\ell-1}\mathbf e_{\ell-1}^\top\bullet \bm Z& \mathbf e_{\ell-1}\mathbf e_{\ell}^\top\bullet \bm Z & \mathbf e_{\ell-1}\mathbf e_{\ell+1}^\top\bullet \bm Z\\
	\mathbf e_{\ell-1}\mathbf e_{\ell}^\top\bullet \bm Z& \mathbf e_{\ell}\mathbf e_{\ell}^\top\bullet \bm Z & \mathbf e_{\ell}\mathbf e_{\ell+1}^\top\bullet \bm Z \\
	\mathbf e_{\ell-1}\mathbf e_{\ell+1}^\top\bullet \bm Z & \mathbf e_{\ell}\mathbf e_{\ell+1}^\top\bullet \bm Z & \mathbf e_{\ell+1}\mathbf e_{\ell+1}^\top\bullet \bm Z \\
	\end{pmatrix}\succeq\bm 0\qquad\forall \ell\in[L+1]. 
	\end{equation} 
	We now show that the relaxations \eqref{eq:main_inequalities} and \eqref{eq:relaxed_k_semidefinite} are sufficient to imply that 
	\begin{equation}
	\label{eq:Georghiou_convex_hull_inequality}
	\begin{array}{l}
	(h_{\ell+1}-h_{\ell})w_{\ell-1} \geq(h_{\ell}-h_{\ell-1})w_{\ell}
	\Longleftrightarrow (h_{\ell+1}-h_{\ell})z_{\ell-1}+(h_{\ell}-h_{\ell-1})z_{\ell+1} \geq(h_{\ell+1}-h_{\ell-1})w_{\ell},
	\end{array}
	\end{equation} 
	where the equivalence follows from the substitutions $w_{\ell-1}=z_{\ell-1}-z_{\ell}$ and $w_{\ell}=z_{\ell}-z_{\ell+1}$.  
	In order to arrive the desired implication, we require that the optimal value of the following optimization problem  is greater than or equal to $0$:
	\begin{equation} 
	\label{eq:optimization_verify_inequality}
	\begin{array}{cl}
	\inf & (h_{\ell+1}-h_{\ell})z_{\ell-1}+(h_{\ell}-h_{\ell-1})z_{\ell+1} -(h_{\ell+1}-h_{\ell-1})w_{\ell}\\
	\st &\bm M_{\ell}, z_{\ell-1},z_{\ell},\text { and }z_{\ell+1} \text{ satisfy }  \eqref{eq:main_inequalities} \text { and } \eqref{eq:relaxed_k_semidefinite}. 
	\end{array}
	\end{equation}
	By weak duality, the optimal value of this problem is lower bounded by the maximization problem
	\begin{equation*} 
	\begin{array}{cl}
	\sup &0\\
	\st 
	&  (h_{\ell+1}-h_{\ell-1})c=(h_{\ell+1}-h_{\ell})\\
	& (h_{\ell}-h_{\ell-1}) d + (h_{\ell+1}-h_{\ell})c= (h_{\ell+1}-h_{\ell-1})+ (h_{\ell+1}-h_{\ell})e\\
	& (h_{\ell}-h_{\ell-1}) + (h_{\ell+1}-h_{\ell-1})a+(h_{\ell+1}-h_{\ell})b=(h_{\ell+1}-h_{\ell-1})d\\
	&\displaystyle \begin{pmatrix}
	c & -\frac{d}{2}&\frac{d-a-c}{2}\\
	-\frac{d}{2} & {-c+d+e}& \frac{-b+c-e}{2}\\
	\frac{d-a-c}{2} &  \frac{-b+c-e}{2} & a+b-d
	\end{pmatrix}\succeq 0\\
	& (a,b,c,d,e)\in\RR_+^5. 
	\end{array}
	\end{equation*}
	One can verify that the solution $(a,b,c,d,e)\in\RR_+^5$ satisfying 
	%\begin{equation*}
	$a=c=\tfrac{h_{\ell+1}-h_{\ell}}{h_{\ell+1}-h_{\ell-1}}$, $b=\tfrac{h_{\ell+1}-h_{\ell-1}}{h_{\ell+1}-h_{\ell}}$, $d=2$, and $e=\tfrac{(h_{\ell}-h_{\ell-1})^2}{(h_{\ell+1}-h_{\ell-1})(h_{\ell+1}-h_{\ell})}$
	%\end{equation*}
	is feasible to the dual problem. Thus, the optimal value of the primal problem \eqref{eq:optimization_verify_inequality} is bounded below by $0$, which  verifies  that the constraints \eqref{eq:main_inequalities} and \eqref{eq:relaxed_k_semidefinite}  imply \eqref{eq:Georghiou_convex_hull_inequality}. 
	
	The above analysis also holds to the general case when the piecewise linear lifting is applied  to all coordinate axis $\uu$.
	 In summary, we have shown that the containment $\U^*\subseteq \U^{**}$ holds. 
	
	%By repeating the same argument for all semi-infinite constraints in \eqref{equ:pldr_}, we may  conclude that the proposed semidefinite program indeed leads to a tighter approximation. 
	%Thus, the claim follows. 
\end{proof}

\subsection*{Proof of Theorem \ref{thm:our_vs_Georghiou}}
\begin{proof}
For a clear proof, we begin with the case when the piecewise linear lifting is only applied  to the first coordinate axis $u_1$, where the breakpoints are given by
	$h_{1}=\underline u_1<h_{2}<\ldots<h_{L}<\overline u_1$. In this case the lifted set $\U'$ simplifies to the one shown in~\eqref{eq:expanded_Georghiou__}.
	
	%It suffices to prove the result for the case when the piecewise linear lifting is only applied  to the first coordinate axis $u_1$, where the breakpoints are given by
	%$h_{1}=\underline u_1<h_{2}<\ldots<h_{L}<\overline u_1$. 
		
	We  apply linear decision rules on the lifted uncertain parameters, which gives rise to the following semi-infinite linear program:
	\begin{equation} \label{equ:pldr_}
	\begin{array}{rcll} 
	&\inf & \cc^{\top} \x + \sup \limits_{(\bm w, \uu) \in \Ubar} \bm{\widehat d}^{\top}\bm Y (\bm w^\top, \bm u^\top)^\top  \\
	&\st  &  \A'(\vv)\x +  \bm{\widehat B}\bm Y   (\bm w^\top, \bm u^\top)^\top \geq \hh'(\vv) \ \ \ \forall \, \bm v:=(\bm w,\uu) \in \U'  \\
	&    & \x \in \X, \, \bm Y\in \RR^{N\times (L+K+1)}.
	\end{array}
	\end{equation}
	Consider the worst-case maximization problem in the objective function of \eqref{equ:pldr_}. For a fixed decision rule coefficient matrix $\bm Y$, let us denote its optimal value by $v(\bm Y)$. That is, 
	\begin{equation} \label{equ:ldr-obj-fixed} 
	v(\bm Y)=\sup \limits_{(\bm w, \uu) \in \Ubar}  \, \bm{\widehat d}^\top\bm{Y} (\bm w^\top, \bm u^\top)^\top.
	\end{equation}
	Replacing the set $\Ubar$ with the outer approximation given by \eqref{eq:Ustarstar}
	 yields the upper bound $v^{**}(\bm Y)\geq v(\bm Y)$. 
	A tractable finite reformulation  can then be derived by virtue of standard dualization technique in robust optimization.  
	
	Alternatively, by applying Proposition \ref{prop:EDR} to the lifted  set $\U'$ in \eqref{eq:expanded_Georghiou__} and using 
	Lemma~\ref{lem:burer}, we arrive at the   equivalent completely positive program
	\begin{equation} \label{equ:pldr-cp}
	\begin{array}{rrl}
	v(\bm Y)=&\sup &   \bm{\widehat d}^\top\bm Y(\bm w^\top, \bm u^\top)^\top\\
	&\st &\mathbf{e}_{K+1}\mathbf{e}_{K+1}^\top \bullet \bm U' = 1\\
	&& \mathbf e_{\ell}\mathbf e_{\ell}^\top\bullet\bm Z'- \mathbf e_{\ell}\mathbf e_{1}^\top\bm \bullet \bm P'+h_{\ell}\mathbf e_{\ell}\mathbf e_{K+1}^\top\bullet \bm P'=0\ \ \ \forall \,  \ell\in[L+1] \\
	&& \begin{pmatrix}
	\bm W' & \bm Q' & \bm R' \\
	(\bm Q')^\top & \bm Z' & \bm P' \\
	(\bm R')^\top & (\bm P')^\top & \bm U'
	\end{pmatrix}  \in \CP(\K'),\ \bm u=\bm U'\mathbf e_{K+1},\ \bm w=\bm R'\mathbf e_{K+1}\\%, \ \ \bm U'=\begin{pmatrix}\bm U&\bm u\\\bm u^\top & 1\end{pmatrix}\\
	&&\bm U' \in \SYM^{K+1},\;\bm Z'\in\SYM^{L+1},\;\bm W'\in\SYM^{L}\\
	&&\bm P'\in\mathbb R^{(L+1)\times (K+1)},\;\bm R'\in\mathbb R^{ L\times (K+1)},\;\bm Q'\in\mathbb R^{L\times L+1},
	\end{array}
	\end{equation}
	where the cone $\mathcal K'$ is defined as
	\begin{equation*}
	\K' := \left\{  
	\begin{array}{l}
	(\bm w, \bm z, \uu) \in \RR^{L}  \times \RR_+^{L+1} \times \K:
	\begin{array}{ll}
	\quad	w_{\ell} = z_{\ell}-z_{\ell+1}& \ell \in [L]\\
	%w_{kL} = v_{kL}\\
	\quad z_{1}=u_{1}-\underline u_1,\; z_{L+1}=0\\
	\quad z_{\ell} \geq u_1-h_{\ell}u_{K+1},\; \overline{u}_1u_{K+1} \geq z_{\ell}  &  \ell \in [L+1] 
%	\quad   \overline{u}u_{K+1} \geq w_{\ell} &\ell \in [L+1]  
	\end{array}
	\end{array}
	\right\}.
	\end{equation*}
	An upper bound to  $v(\bm Y)$ is then obtained by replacing the completely positive 
	cone $\CP(\K')$ in \eqref{equ:pldr-cp} with a valid semidefinite-representable outer approximation. 
	To this end, we further loosen the relaxation by  considering only those constraints that are independent across  dimensions. We then obtain an outer approximation defined by ~\eqref{eq:Ustar} to 
	the feasible set of  decision variables $\bm u$ 
	and $\bm w$ in~\eqref{equ:pldr-cp}.
	Using $\U^*$ to replace $\U$ in \eqref{equ:ldr-obj-fixed}, we arrive at another upper bound 
	${ v}^*(\bm Y)\geq v(\bm Y)$. As the resulting maximization problem admits a Slater point, 
	a tractable finite reformulation can then be obtained by applying standard conic duality.  
    By Proposition~\ref{prop:our_vs_Georghiou}, we establish that $\U^*\subseteq \U^{**}$, which, in turn, establises that $v^{*}(\bm Y)\leq v^{**}(\bm Y)$. 
	
	The above analysis also holds to the general case when the piecewise linear lifting is applied  to all coordinate axis $\uu$. Thus, the claim follows. 
\end{proof}

\end{onehalfspace}

\bibliographystyle{plain}

\bibliography{qdr}

\begin{thebibliography}{10}

\bibitem{Alizadeh.Goldfarb.2003}
Farid Alizadeh and Donald Goldfarb.
\newblock Second-order cone programming.
\newblock {\em Mathematical programming}, 95(1):3--51, 2003.

\bibitem{Anderson.Mooore.2007}
Brian~DO Anderson and John~B Moore.
\newblock {\em Optimal Control: Linear Quadratic Methods}.
\newblock Courier Corporation, 2007.

\bibitem{Mosek}
MOSEK ApS.
\newblock {\em The MOSEK optimization toolbox for MATLAB manual. Version 8.0.},
  2016.

\bibitem{Ardestani-Jaafari.Delage.2016}
Amir Ardestani-Jaafari and Erick Delage.
\newblock Linearized robust counterparts of two-stage robust optimization
  problems with applications in operations management.
\newblock {\em INFORMS Journal on Computing}, 2020.

\bibitem{Atamturk.Zhang2007}
Alper Atamt{\"u}rk and Muhong Zhang.
\newblock Two-stage robust network flow and design under demand uncertainty.
\newblock {\em Operations Research}, 55(4):662--673, 2007.

\bibitem{Bampou.Kuhn.2011}
Dimitra Bampou and Daniel Kuhn.
\newblock Scenario-free stochastic programming with polynomial decision rules.
\newblock In {\em Decision and Control and European Control Conference
  (CDC-ECC), 2011 50th IEEE Conference on}, pages 7806--7812. IEEE, 2011.

\bibitem{Bemporad.Borrelli.Morari.2003}
Alberto Bemporad, Francesco Borrelli, and Manfred Morari.
\newblock Min-max control of constrained uncertain discrete-time linear
  systems.
\newblock {\em IEEE Transactions on automatic control}, 48(9):1600--1606, 2003.

\bibitem{Ben-Tal.Ghaoui.Nemirovski.2009}
Aharon Ben-Tal, Laurent El~Ghaoui, and Arkadi Nemirovski.
\newblock {\em Robust optimization}.
\newblock Princeton University Press, 2009.

\bibitem{Ben-Tal.El-Housni.Goyal.2018}
Aharon Ben-Tal, Omar El~Housni, and Vineet Goyal.
\newblock A tractable approach for designing piecewise affine policies in
  two-stage adjustable robust optimization.
\newblock {\em Mathematical Programming}, 182(1):57--102, 2020.

\bibitem{Ben-Tal.Golany.Nemirovski.Vial.2005}
Aharon Ben-Tal, Boaz Golany, Arkadi Nemirovski, and Jean-Philippe Vial.
\newblock Retailer-supplier flexible commitments contracts: A robust
  optimization approach.
\newblock {\em Manufacturing \& Service Operations Management}, 7(3):248--271,
  2005.

\bibitem{Ben-Tal.Goryashko.Guslitzer.Nemirovski.2004}
Aharon Ben-Tal, Alexander Goryashko, Elana Guslitzer, and Arkadi Nemirovski.
\newblock Adjustable robust solutions of uncertain linear programs.
\newblock {\em Mathematical Programming}, 99(2):351--376, 2004.

\bibitem{Ben-Tal.Margalit.Nemirovski.2000}
Aharon Ben-Tal, Tamar Margalit, and Arkadi Nemirovski.
\newblock Robust modeling of multi-stage portfolio problems.
\newblock In {\em High performance optimization}, pages 303--328. Springer,
  2000.

\bibitem{Ben-Tal.Nemirovski.1999}
Aharon Ben-Tal and Arkadi Nemirovski.
\newblock Robust solutions of uncertain linear programs.
\newblock {\em Operations research letters}, 25(1):1--13, 1999.

\bibitem{Ben-Tal.Nemirovski.2002}
Aharon Ben-Tal and Arkadi Nemirovski.
\newblock Robust optimization--methodology and applications.
\newblock {\em Mathematical Programming}, 92(3):453--480, 2002.

\bibitem{Bertsimas.Brown.Caramanis.2011}
Dimitris Bertsimas, David~B Brown, and Constantine Caramanis.
\newblock Theory and applications of robust optimization.
\newblock {\em SIAM review}, 53(3):464--501, 2011.

\bibitem{Bertsimas.deRuiter.2016}
Dimitris Bertsimas and Frans~JCT de~Ruiter.
\newblock Duality in two-stage adaptive linear optimization: Faster computation
  and stronger bounds.
\newblock {\em INFORMS Journal on Computing}, 28(3):500--511, 2016.

\bibitem{Bertsimas.Dunning.2016}
Dimitris Bertsimas and Iain Dunning.
\newblock Multistage robust mixed-integer optimization with adaptive
  partitions.
\newblock {\em Operations Research}, 64(4):980--998, 2016.

\bibitem{Bertsimas.Georghiou.2015}
Dimitris Bertsimas and Angelos Georghiou.
\newblock Design of near optimal decision rules in multistage adaptive
  mixed-integer optimization.
\newblock {\em Operations Research}, 63(3):610--627, 2015.

\bibitem{Bertsimas.Georghiou.2018}
Dimitris Bertsimas and Angelos Georghiou.
\newblock Binary decision rules for multistage adaptive mixed-integer
  optimization.
\newblock {\em Mathematical Programming}, 167(2):395--433, 2018.

\bibitem{Bertsimas.Goyal.2012}
Dimitris Bertsimas and Vineet Goyal.
\newblock On the power and limitations of affine policies in two-stage adaptive
  optimization.
\newblock {\em Mathematical programming}, 134(2):491--531, 2012.

\bibitem{Bertsimas.Goyal.2015}
Dimitris Bertsimas, Vineet Goyal, and Brian~Y Lu.
\newblock A tight characterization of the performance of static solutions in
  two-stage adjustable robust linear optimization.
\newblock {\em Mathematical Programming}, 150(2):281--319, 2015.

\bibitem{Bertsimas.Iancu.Parrilo.2010}
Dimitris Bertsimas, Dan~A Iancu, and Pablo~A Parrilo.
\newblock Optimality of affine policies in multistage robust optimization.
\newblock {\em Mathematics of Operations Research}, 35(2):363--394, 2010.

\bibitem{Bertsimas.Iancu.Parrilo.2011}
Dimitris Bertsimas, Dan~Andrei Iancu, and Pablo~A Parrilo.
\newblock A hierarchy of near-optimal policies for multistage adaptive
  optimization.
\newblock {\em IEEE Transactions on Automatic Control}, 56(12):2809--2824,
  2011.

\bibitem{Bertsimas.Litvinov.Sun.Zhao.Zheng.2013}
Dimitris Bertsimas, Eugene Litvinov, Xu~Andy Sun, Jinye Zhao, and Tongxin
  Zheng.
\newblock Adaptive robust optimization for the security constrained unit
  commitment problem.
\newblock {\em IEEE Transactions on Power Systems}, 28(1):52--63, 2013.

\bibitem{Bomze.Klerk.2002}
I.~M. Bomze and E.~de~Klerk.
\newblock Solving standard quadratic optimization problems via linear,
  semidefinite and copositive programming.
\newblock {\em Journal of Global Optimization}, 24(2):163--185, 2002.

\bibitem{Bomze.2012}
Immanuel~M Bomze.
\newblock Copositive optimization--recent developments and applications.
\newblock {\em European Journal of Operational Research}, 216(3):509--520,
  2012.

\bibitem{Bomze.Dur.Klerk.Roos.2000}
Immanuel~M Bomze, Mirjam D{\"u}r, Etienne De~Klerk, Cornelis Roos, Arie~J
  Quist, and Tam{\'a}s Terlaky.
\newblock On copositive programming and standard quadratic optimization
  problems.
\newblock {\em Journal of Global Optimization}, 18(4):301--320, 2000.

\bibitem{Burer.2009}
Samuel Burer.
\newblock On the copositive representation of binary and continuous nonconvex
  quadratic programs.
\newblock {\em Mathematical Programming}, 120(2):479--495, 2009.

\bibitem{Burer.2012}
Samuel Burer.
\newblock Copositive programming.
\newblock In {\em Handbook on semidefinite, conic and polynomial optimization},
  pages 201--218. Springer, 2012.

\bibitem{Burer.2015}
Samuel Burer.
\newblock A gentle, geometric introduction to copositive optimization.
\newblock {\em Mathematical Programming}, 151(1):89--116, 2015.

\bibitem{Burer.Dong.2012}
Samuel Burer and Hongbo Dong.
\newblock Representing quadratically constrained quadratic programs as
  generalized copositive programs.
\newblock {\em Operations Research Letters}, 40:203--206, 2012.

\bibitem{Calafiore2008}
Giuseppe~Carlo Calafiore.
\newblock Multi-period portfolio optimization with linear control policies.
\newblock {\em Automatica}, 44(10):2463--2473, 2008.

\bibitem{Chen.Burer.2012}
Jieqiu Chen and Samuel Burer.
\newblock Globally solving nonconvex quadratic programming problems via
  completely positive programming.
\newblock {\em Mathematical Programming Computation}, 4(1):33--52, 2012.

\bibitem{Chen.Sim.Sun.2007}
Xin Chen, Melvyn Sim, and Peng Sun.
\newblock A robust optimization perspective on stochastic programming.
\newblock {\em Operations Research}, 55(6):1058--1071, 2007.

\bibitem{Chen.Sim.Peng.Zhang.2008}
Xin Chen, Melvyn Sim, Peng Sun, and Jiawei Zhang.
\newblock A linear decision-based approximation approach to stochastic
  programming.
\newblock {\em Operations Research}, 56(2):344--357, 2008.

\bibitem{Chen.Zhang.2009}
Xin Chen and Yuhan Zhang.
\newblock Uncertain linear programs: Extended affinely adjustable robust
  counterparts.
\newblock {\em Operations Research}, 57(6):1469--1482, 2009.

\bibitem{Dantzig.Infanger.1993}
George~B Dantzig and Gerd Infanger.
\newblock Multi-stage stochastic linear programs for portfolio optimization.
\newblock {\em Annals of Operations Research}, 45(1):59--76, 1993.

\bibitem{DeKlerk.Pasechnik.2002}
E.~de~Klerk and D.~V. Pasechnik.
\newblock Approximation of the stability number of a graph via copositive
  programming.
\newblock {\em SIAM Journal on Optimization}, 12(4):875--892, 2002.

\bibitem{Doulabi.Jaillet.Pesant.Rousseau.2016}
Seyed Hossein~Hashemi Doulabi, Patrick Jaillet, Gilles Pesant, and Louis-Martin
  Rousseau.
\newblock Exploiting the structure of two-stage robust optimization models with
  integer adversarial variables.
\newblock {\em Manuscript, MIT}, 2016.

\bibitem{Dur.2010}
Mirjam D{\"u}r.
\newblock Copositive programming—a survey.
\newblock {\em Recent advances in optimization and its applications in
  engineering}, 320, 2010.

\bibitem{Eichfelder.Jahn.2008}
G.~Eichfelder and J.~Jahn.
\newblock Set-semidefinite optimization.
\newblock {\em Journal of Convex Analysis}, 15:767--801, 2008.

\bibitem{Garey.Johnson.1979}
Michael~R Garey and David~S Johnson.
\newblock {\em Computers and intractability}, volume~29.
\newblock W. H. Freeman and Company San Francisco, 1979.

\bibitem{Georghiou.Tsoukalas.Wiesemann.2016}
Angelos Georghiou, Angelos Tsoukalas, and Wolfram Wiesemann.
\newblock Robust dual dynamic programming.

\bibitem{Georghiou.Tsoukalas.Wiesemann.2017}
Angelos Georghiou, Angelos Tsoukalas, and Wolfram Wiesemann.
\newblock A primal-dual lifting scheme for two-stage robust optimization.
\newblock 2017.

\bibitem{Georghiou.Wiesemann.Kuhn.2015}
Angelos Georghiou, Wolfram Wiesemann, and Daniel Kuhn.
\newblock Generalized decision rule approximations for stochastic programming
  via liftings.
\newblock {\em Mathematical Programming}, 152(1-2):301--338, 2015.

\bibitem{Georghiou.Wiesemann.Kuhn.2015-2}
Angelos Georghiou, Wolfram Wiesemann, and Daniel Kuhn.
\newblock Wind of change: Meeting the uk’s renewable energy targets.
\newblock 2015.

\bibitem{Goh.Sim.2010}
Joel Goh and Melvyn Sim.
\newblock Distributionally robust optimization and its tractable
  approximations.
\newblock {\em Operations research}, 58(4-part-1):902--917, 2010.

\bibitem{Gokalp.Mittal.Hanasusanto.2018}
Can Gokalp, Areesh Mittal, and Grani Hanasusanto.
\newblock Robust quadratic programming with mixed-integer uncertainty.
\newblock {\em Optimization Online}, 2018.

\bibitem{Gounaris.Wiesemann.Floudas.2013}
Chrysanthos~E Gounaris, Wolfram Wiesemann, and Christodoulos~A Floudas.
\newblock The robust capacitated vehicle routing problem under demand
  uncertainty.
\newblock {\em Operations Research}, 61(3):677--693, 2013.

\bibitem{Guslitser.2002}
Elana Guslitser.
\newblock {\em Uncertainty-immunized solutions in linear programming}.
\newblock PhD thesis, Citeseer, 2002.

\bibitem{Hadjiyiannis.Goulart.Kuhn.2011}
M.~J. Hadjiyiannis, P.~J. Goulart, and D.~Kuhn.
\newblock A scenario approach for estimating the suboptimality of linear
  decision rules in two-stage robust optimization.
\newblock In {\em 50th {IEEE} conference on decision and control and {E}uropean
  control conference (CDC-ECC)}, Orlando, FL, USA, December 12-15 2011.

\bibitem{Hanasusanto.Kuhn.2016}
Grani~A Hanasusanto and Daniel Kuhn.
\newblock Conic programming reformulations of two-stage distributionally robust
  linear programs over wasserstein balls.
\newblock {\em Operations Research}, 66(3):849--869, 2018.

\bibitem{Hanasusanto.Kuhn.Wallace.Zymler.2015}
Grani~A Hanasusanto, Daniel Kuhn, Stein~W Wallace, and Steve Zymler.
\newblock Distributionally robust multi-item newsvendor problems with
  multimodal demand distributions.
\newblock {\em Mathematical Programming}, 152(1-2):1--32, 2015.

\bibitem{Hanasusanto.Kuhn.2013}
Grani~Adiwena Hanasusanto and Daniel Kuhn.
\newblock Robust data-driven dynamic programming.
\newblock In {\em Advances in Neural Information Processing Systems}, pages
  827--835, 2013.

\bibitem{Iancu.Sharma.Sviridenko.2013}
Dan~A Iancu, Mayank Sharma, and Maxim Sviridenko.
\newblock Supermodularity and affine policies in dynamic robust optimization.
\newblock {\em Operations Research}, 61(4):941--956, 2013.

\bibitem{Kong.Lee.Teo.Zheng.2013}
Qingxia Kong, Chung-Yee Lee, Chung-Piaw Teo, and Zhichao Zheng.
\newblock Scheduling arrivals to a stochastic service delivery system using
  copositive cones.
\newblock {\em Operations research}, 61(3):711--726, 2013.

\bibitem{Kuhn.Wiesemann.Georghiou.2011}
Daniel Kuhn, Wolfram Wiesemann, and Angelos Georghiou.
\newblock Primal and dual linear decision rules in stochastic and robust
  optimization.
\newblock {\em Mathematical Programming}, 130(1):177--209, 2011.

\bibitem{Lasserre.2009}
J.~B. Lasserre.
\newblock Convexity in semialgebraic geometry and polynomial optimization.
\newblock {\em SIAM Journal on Optimization}, 19(4):1995--2014, 2009.

\bibitem{Lofberg2004yalmip}
Johan Lofberg.
\newblock Yalmip: A toolbox for modeling and optimization in matlab.
\newblock In {\em Computer Aided Control Systems Design, 2004 IEEE
  International Symposium on}, pages 284--289. IEEE, 2004.

\bibitem{Martins.2013}
Paula~Cristina Martins~da Silva~Rocha.
\newblock {\em Medium-Term Planning in Deregulated Energy Markets with Decision
  Rules}.
\newblock PhD thesis.

\bibitem{Natarajan.Teo.2017}
Karthik Natarajan and Chung-Piaw Teo.
\newblock On reduced semidefinite programs for second order moment bounds with
  applications.
\newblock {\em Mathematical Programming}, 161(1-2):487--518, 2017.

\bibitem{Natarajan.Teo.Zheng.2011}
Karthik Natarajan, Chung~Piaw Teo, and Zhichao Zheng.
\newblock Mixed 0-1 linear programs under objective uncertainty: A completely
  positive representation.
\newblock {\em Operations research}, 59(3):713--728, 2011.

\bibitem{Parrilo.2000}
P.~A. Parrilo.
\newblock {\em Structured semidefinite programs and semialgebraic geometry
  methods in robustness and optimization}.
\newblock PhD thesis, California Institute of Technology, 2000.

\bibitem{Postek.Hertog.2016}
Krzysztof Postek and Dick~den Hertog.
\newblock Multistage adjustable robust mixed-integer optimization via iterative
  splitting of the uncertainty set.
\newblock {\em INFORMS Journal on Computing}, 28(3):553--574, 2016.

\bibitem{Rocha.Kuhn.2012}
Paula Rocha and Daniel Kuhn.
\newblock Multistage stochastic portfolio optimisation in deregulated
  electricity markets using linear decision rules.
\newblock {\em European Journal of Operational Research}, 216(2):397--408,
  2012.

\bibitem{See.Sim.2010}
Chuen-Teck See and Melvyn Sim.
\newblock Robust approximation to multiperiod inventory management.
\newblock {\em Operations research}, 58(3):583--594, 2010.

\bibitem{Xu.Burer.2018}
Guanglin Xu and Samuel Burer.
\newblock A copositive approach for two-stage adjustable robust optimization
  with uncertain right-hand sides.
\newblock {\em Computational Optimization and Applications}, 70(1):33--59,
  2018.

\bibitem{Zeng.Zhao.2013}
Bo~Zeng and Long Zhao.
\newblock Solving two-stage robust optimization problems using a
  column-and-constraint generation method.
\newblock {\em Operations Research Letters}, 41(5):457--461, 2013.

\bibitem{Zhen.Hertog.Sim.2016}
Jianzhe Zhen, Dick Den~Hertog, and Melvyn Sim.
\newblock Adjustable robust optimization via fourier--motzkin elimination.
\newblock {\em Operations Research}, 66(4):1086--1100, 2018.

\end{thebibliography}

\appendix

\end{document}